\newtheorem{theorem}{Theorem}
\newtheorem{definition}[theorem]{Definition}
\newtheorem{example}[theorem]{Example}
\newtheorem{proposition}[theorem]{Proposition}
\newtheorem{remark}[theorem]{Remark}
\newenvironment{proof}[1][Proof]{\noindent \textbf{#1.} }{\  \rule{0.5em}{0.5em}}
\begin{document}

\title{\textbf{Restricted distance-type Gaussian estimators based on density
power divergence and their applications in hypothesis testing}}
\date{}
\author{A. Felipe, M. Jaenada$,$ P. Miranda and L. Pardo \\
%EndAName
{\small Department of Statistics and O.R., Complutense University of Madrid,
Spain}}
\date{}
\maketitle

\begin{abstract}
Zhang (2019) presented a general estimation approach based on the Gaussian
distribution for general parametric models where the likelihood of the data
is difficult to obtain or unknown, but the mean and variance-covariance
matrix are known. Castilla and Zografos (2021) extended the method to
density power divergence-based estimators, which are more robust than the
likelihood-based Gaussian estimator against data contamination. In this
paper we introduce the restricted minimum density power divergence Gaussian
estimator (MDPDGE) and study its main asymptotic properties. Also, we examine it robustness through
its  influence function analysis. Restricted
estimators are required in many practical situations, in special in testing
composite null hypothesis, and provide here constrained estimators to
inherent restrictions of the underlying distribution. Further, we derive
robust Rao-type test statistics based on the MDPDGE for testing simple null hypothesis and we deduce explicit expressions for some main
important distributions. Finally, we empirically evaluate the efficiency and
robustness of the method through a simulation study.
\end{abstract}

\noindent \underline{\textbf{AMS 2001 Subject Classification}:}%
62F35, 62J12. 

\noindent \underline{\textbf{Keywords and phrases}}\textbf{:} Gaussian
estimator, Minimum density power divergence Gaussian estimator, Robustness,
Influence function, Restricted Minimum density power divergence Gaussian
estimator, Rao-type tests, Elliptical family of distributions.

\section{Introduction}

Let $\boldsymbol{Y}_{1},...,$ $\boldsymbol{Y}_{n}$ be independent and
identically distributed observations from a $m$-dimensional random vector $%
\boldsymbol{Y}$ with probability density function $f_{\mathbf{\theta }}(%
\boldsymbol{y}\mathbf{),}$ $\boldsymbol{\theta }\in \Theta \subset \mathbb{R}%
^{d}.$ We denote, 
\begin{equation}
E_{\boldsymbol{\theta }}\left[ \boldsymbol{Y}\right] =\boldsymbol{\mu }%
\left( \boldsymbol{\theta }\right) \text{ and }Cov_{\boldsymbol{\theta }}%
\left[ \boldsymbol{Y}\right] =\boldsymbol{\Sigma }\left( \boldsymbol{\theta }%
\right) .  \label{100}
\end{equation}%
The log-likelihood function of the assumed model is given by
\begin{equation*}
l\left( \boldsymbol{\theta }\right) =\tsum \limits_{i=1}^{n}\log f_{\mathbf{%
\theta }}(\boldsymbol{y}_{i}\mathbf{)}
\end{equation*}%
for $\boldsymbol{y}_{1},...,$ $\boldsymbol{y}_{n}$ observations of the $m$%
-dimensional random vectors $\mathbf{Y}_{1},...,$ $\mathbf{Y}_{n}.$ 
Then, the the maximum likelihood estimator (MLE) is computed as
\begin{equation} \label{eq:loglikelihood}
\widehat{\boldsymbol{\theta}}_{\text{MLE}} = \operatorname{max}_{\boldsymbol{\theta} \in \Theta} l\left( \boldsymbol{\theta }\right).
\end{equation}
 In many real life situations the underlying density function, $f_{\theta}(\cdot),$ is unknown or it computation is quite difficult but contrariwise, the mean vector and variance-covariance matrices of the underlying distribution of the data, namely $\boldsymbol{\mu}(\boldsymbol{\theta }) $ and $\boldsymbol{\Sigma }\left( \boldsymbol{\theta}\right),$ are known.
 
 In this case, Zhang (2019) proposed a general procedure based on the Gaussian distribution for estimating the model parameter vector $\boldsymbol{\theta}.$  
 Zhang (2019) assumed that the $m-$dimensional random vector $\boldsymbol{Y}$ came from a multidimensional normal distribution  with vector mean $\boldsymbol{\mu }\left( \boldsymbol{\theta }\right) $ and variance-covariance matrix $\boldsymbol{\Sigma }\left( \boldsymbol{\theta }\right).$ 
 From a statistical point of view this procedure can be justified on
the basis of the maximum-entropy principle, (see Kapur (1989)), as the multidimensional normal distribution  has maximum uncertainty in terms of Shannon entropy and is as well consistent with the given information, vector mean
and variance-covariance matrix.

 Then, an estimator of the model parameter $\boldsymbol{\theta}$ based on the Gaussian distribution can be obtained by maximizing the log-likelihood function as defined in (\ref{eq:loglikelihood}), but using $f_{\boldsymbol{\theta}}(\cdot)$  the probability density function of a normal distribution with known mean $\boldsymbol{\mu}(\boldsymbol{\theta }) $ and variance-covariance matrix $\boldsymbol{\Sigma }\left( \boldsymbol{\theta }\right),$ corresponding to the true mean and variance-covariance matrix of the underlying distribution.
That is, the Gaussian-based likelihood function of $\boldsymbol{\theta}$ is  given by
\begin{equation}
l_{G}\left( \boldsymbol{\theta }\right) =-\frac{nm}{2}\log 2\pi -\frac{n}{2}%
\log \left \vert \boldsymbol{\Sigma }\left( \boldsymbol{\theta }\right)
\right \vert -\frac{1}{2}\tsum \limits_{i=1}^{n}\left( \boldsymbol{y}_{i}-%
\boldsymbol{\mu }\left( \boldsymbol{\theta }\right) \right) ^{T}\boldsymbol{%
\Sigma }\left( \boldsymbol{\theta }\right) ^{-1}\left( \boldsymbol{y}_{i}-%
\boldsymbol{\mu }\left( \boldsymbol{\theta }\right) \right)  \label{101}
\end{equation}%
for any $\boldsymbol{y}_{1},...,\boldsymbol{y}_{n}$ independent observations of the population $\boldsymbol{Y},$
and the Gaussian MLE of $\boldsymbol{\theta }$ is defined by
\begin{equation*}
\widehat{\boldsymbol{\theta}}_{G}=\arg \max_{\boldsymbol{\theta }\in \Theta
}l_{G}\left( \boldsymbol{\theta}\right) .
\end{equation*}

%This idea consists of on assuming that the observations $\boldsymbol{y}%_{1},...,$ $\boldsymbol{y}_{n}$ are from a $m$-variate normal population with vector mean $\boldsymbol{\mu }\left( \boldsymbol{\theta }\right) $ and variance-covariance matrix $\boldsymbol{\Sigma }\left( \boldsymbol{\theta }\right) ,$ the vector mean and variance-covariance matrix associated to the population $\boldsymbol{Y}.$
The Gaussian estimator is a MLE and so inherit all good properties of the likelihood estimators. 
%it isrelated to the Kullback-Leibler divergence (see Pardo, 2006) 
It works well in terms of the asymptotic efficiency but it has important robustness problems. That is, in the absence of contamination in data, the MLE consistently estimates the true value of the model parameter, but in counterpart it may get quite heavily affected by outlying observations in the data.
For this reason, Castilla and Zografos (2022) extended the concept of Gaussian estimator and defined a robust version of the estimator based on the density power divergence (DPD) introduced in Basu et al (1998). The DPD robustly quantifies the statistical difference between two distributions and it has been widely used for developing robust inferential methods in many different statistical models. 
Given a set of observations, the robust minimum DPD estimator (MDPDE) is computed as the minimizer of the DPD between the assumed model distribution and the empirical distribution of the data. The MDPDE enjoys good asymptotic properties and produces robust estimators under general statistical models, as discussed later.
The minimum density power divergence Gaussian estimator (MDPDGE) of the parameter $\boldsymbol{\theta},$ is defined for $\tau \geq 0$ as
\begin{equation}
\widehat{\boldsymbol{\theta }}_{G}^{\tau }=\arg \max_{\boldsymbol{\theta }%
\in \Theta \subset \mathbb{R}^{d}}H_{n}^{\tau }\left( \boldsymbol{\theta }%
\right)  \label{102}
\end{equation}%
where
\begin{equation} 
\label{eq:Hn}
\begin{aligned}
H_{n}^{\tau }\left( \boldsymbol{\theta }\right) =&\frac{\tau +1}{\tau
\left( 2\pi \right) ^{m\tau /2}\left \vert \boldsymbol{\Sigma }\left( 
\boldsymbol{\theta }\right) \right \vert ^{\tau /2}}\frac{1}{n} 
 \left[ \tsum \limits_{i=1}^{n}\exp \left \{ -\frac{\tau }{2}\left( 
\boldsymbol{y}_{i}-\boldsymbol{\mu }\left( \boldsymbol{\theta }\right)
\right) ^{T}\boldsymbol{\Sigma }\left( \boldsymbol{\theta }\right)
^{-1}\left( \boldsymbol{y}_{i}-\boldsymbol{\mu }\left( \boldsymbol{\theta }%
\right) \right) \right \} \right.  \\
& \hspace{0.3cm} \left. -\frac{\tau }{\left( 1+\tau \right) ^{(m/2)+1}}\right] -\frac{1}{%
\tau }  \\
=&a\left \vert \boldsymbol{\Sigma }\left( \boldsymbol{\theta }\right)
\right \vert ^{-\frac{\tau }{2}}\frac{1}{n}\left[ \tsum
\limits_{i=1}^{n}\exp \left \{ -\frac{\tau }{2}\left( \boldsymbol{y}_{i}-%
\boldsymbol{\mu }\left( \boldsymbol{\theta }\right) \right) ^{T}\boldsymbol{%
\Sigma }\left( \boldsymbol{\theta }\right) ^{-1}\left( \boldsymbol{y}_{i}-%
\boldsymbol{\mu }\left( \boldsymbol{\theta }\right) \right) \right \} -b%
\right] -\frac{1}{\tau }, 
\end{aligned}
\end{equation}
and
\begin{equation}
a=\frac{\tau +1}{\tau \left( 2\pi \right) ^{m\tau /2}}\text{ \ and }b=\frac{%
\tau }{\left( 1+\tau \right) ^{(m/2)+1}}.  \label{103b}
\end{equation}

The  MDPDGE family is indexed by a tuning parameter $\tau$ controlling the trade-off between robustness and efficiency; the greater value of $\tau$, the more robust the resulting estimator is but less efficiency. It has been shown in the literature that values of the tuning parameter above 1 do not provide sufficiently efficient estimators and so, the tuning parameter would be  chosen in the $[0,1]$ interval.
Further, at $\tau = 0$ the MDPDGE reduces to the Gaussian estimator of Zhang (2019),
%It is immediate to see that the Gaussian estimator, $\widehat{\boldsymbol{\theta }}_{G},$ of Zhang (2019) can be defined by, 
\begin{equation*}
\widehat{\boldsymbol{\theta}}_{G}=\arg \max_{\boldsymbol{\theta }\in \Theta
\subset \mathbb{R}^{d}}H_{n}^{0}\left( \boldsymbol{\theta }\right)
\end{equation*}%
with 
\begin{equation} \label{eq:KL}
H_{n}^{0}\left( \boldsymbol{\theta }\right) =\lim_{\tau \rightarrow
0}H_{n}^{\tau }\left( \boldsymbol{\theta }\right) =-\frac{n}{2}\log \left
\vert \boldsymbol{\Sigma }\left( \boldsymbol{\theta }\right) \right \vert -%
\frac{1}{2}\tsum \limits_{i=1}^{n}\left( \boldsymbol{y}_{i}-\boldsymbol{\mu }%
\left( \boldsymbol{\theta }\right) \right) ^{T}\boldsymbol{\Sigma }\left( 
\boldsymbol{\theta }\right) ^{-1}\left( \boldsymbol{y}_{i}-\boldsymbol{\mu }%
\left( \boldsymbol{\theta }\right) \right) .
\end{equation}

Note the above objective function does not perfectly match with the likelihood function of the model stated in (\ref{eq:loglikelihood}) as it lacks the first term of the likelihood. However, this term does not depend on the parameter $\boldsymbol{\theta}$ and thus both loss functions will lead to the same maximizer.
Indeed, the loss in Equation (\ref{eq:KL}) corresponds to the Kullback-Leiber divergence between the assumed normal distribution and the empirical distribution of the data, which justifies the MLE from the information theory. The Kullback-Leiber divergence is the limiting divergence on the DPD family at $\tau=0$ and so the MDPDGE is a generalization of the classical Gaussian estimator with a tuning parameter controlling the compromise between efficiency and robustness.
% for first term not depending on the model
%Hence, the MDPDGE is based in the DPD measure, defined by Basu et al (1998),while the Gaussian estimator is based on the Kullback- Leibler divergence.
%All details about the MDPDGE, $\widehat{\boldsymbol{\theta }}_{G}^{\tau },$ can be seen in Castilla and Zografos (2022). 

Further, the MDPDGE is consistent and the asymptotically normal, that is,
%distribution of , $\widehat{\boldsymbol{\theta }}_{G}^{\tau },$ were studied. 
given $\boldsymbol{Y}_{1},...,$ $\boldsymbol{Y}_{n}$  independent and identically
distributed vectors from the $m$-dimensional random vector $\boldsymbol{Y}$,
the MDPDGE, $\widehat{\boldsymbol{\theta }}_{G}^{\tau },$ defined in (\ref{102}) satisfies
\begin{equation}
\sqrt{n}\left( \widehat{\boldsymbol{\theta }}_{G}^{\tau }-\boldsymbol{\theta 
}\right) \underset{n\longrightarrow \infty }{\overset{\mathcal{L}}{%
\longrightarrow }}\mathcal{N}(\boldsymbol{0}_{d},\boldsymbol{J}_{\boldsymbol{%
\tau }}(\boldsymbol{\theta })^{-1}\boldsymbol{K}_{\boldsymbol{\tau }}(%
\boldsymbol{\theta })\boldsymbol{J}_{\boldsymbol{\tau }}(\boldsymbol{\theta }%
)^{-1})  \label{103a}
\end{equation}%
being 
\begin{equation*}
\boldsymbol{J}_{\boldsymbol{\tau }}(\boldsymbol{\theta })=\left( J_{\tau
}^{ij}\left( \boldsymbol{\theta }\right) \right) _{i,j=1,..,,d}\text{ and }%
\boldsymbol{K}_{\boldsymbol{\tau }}(\boldsymbol{\theta })=\left( K_{\tau
}^{ij}\left( \boldsymbol{\theta }\right) \right) _{i,j=1,..,,d}.
\end{equation*}%
and the elements $J_{\tau }^{ij}\left( \boldsymbol{\theta }\right) $ and $%
K_{\tau }^{ij}\left( \boldsymbol{\theta }\right) $ of the matrices $J_{\tau
}\left( \boldsymbol{\theta }\right) $ and $K_{\tau }\left( \boldsymbol{%
\theta }\right) $ are given by
\begin{eqnarray}
J_{\tau }^{ij}\left( \boldsymbol{\theta }\right) &=&\left( \frac{1}{\left(
2\pi \right) ^{m/2}\left \vert \boldsymbol{\Sigma }\left( \boldsymbol{\theta 
}\right) \right \vert ^{1/2}}\right) ^{\tau }\frac{1}{\left( 1+\tau \right)
^{(m/2)+2}}  \label{103aa} \\
&&\left[ \left( \tau +1\right) trace\left( \boldsymbol{\Sigma }\left( 
\boldsymbol{\theta }\right) ^{-1}\frac{\partial \boldsymbol{\mu }\left( 
\boldsymbol{\theta }\right) }{\partial \boldsymbol{\theta }}\left( \frac{%
\partial \boldsymbol{\mu }\left( \boldsymbol{\theta }\right) }{\partial 
\boldsymbol{\theta }}\right) ^{T}\right) \right.  \notag \\
&&\left. +\Delta _{\tau }^{i}\Delta _{\tau }^{j}+\frac{1}{2}trace\left( 
\boldsymbol{\Sigma }\left( \boldsymbol{\theta }\right) ^{-1}\dfrac{\partial 
\boldsymbol{\Sigma }\left( \boldsymbol{\theta }\right) }{\partial \theta _{j}%
}\boldsymbol{\Sigma }\left( \boldsymbol{\theta }\right) ^{-1}\dfrac{\partial 
\boldsymbol{\Sigma }\left( \boldsymbol{\theta }\right) }{\partial \theta _{i}%
}\right) \right]  \notag
\end{eqnarray}%
and 
\begin{eqnarray}
K_{\tau }^{ij}\left( \boldsymbol{\theta }\right) &=&\left( \frac{1}{\left(
2\pi \right) ^{m/2}\left \vert \boldsymbol{\Sigma }\left( \boldsymbol{\theta 
}\right) \right \vert ^{1/2}}\right) ^{2\tau }\frac{1}{\left( 1+2\tau
\right) ^{(m/2)+2}}\left[ \Delta _{2\tau }^{i}\Delta _{2\tau }^{j}\right.
\label{103aaa} \\
&&\left. +\left( 1+2\tau \right) trace\left( \boldsymbol{\Sigma }\left( 
\boldsymbol{\theta }\right) ^{-1}\frac{\partial \boldsymbol{\mu }\left( 
\boldsymbol{\theta }\right) }{\partial \boldsymbol{\theta }}\left( \frac{%
\partial \boldsymbol{\mu }\left( \boldsymbol{\theta }\right) }{\partial 
\boldsymbol{\theta }}\right) ^{T}\right) \right.  \notag \\
&&\left. +\frac{1}{2}trace\left( \boldsymbol{\Sigma }\left( \boldsymbol{%
\theta }\right) ^{-1}\dfrac{\partial \boldsymbol{\Sigma }\left( \boldsymbol{%
\theta }\right) }{\partial \theta _{j}}\boldsymbol{\Sigma }\left( 
\boldsymbol{\theta }\right) ^{-1}\dfrac{\partial \boldsymbol{\Sigma }\left( 
\boldsymbol{\theta }\right) }{\partial \theta _{i}}\right) \right]  \notag \\
&&-\left( \frac{1}{\left( 2\pi \right) ^{m/2}\left \vert \boldsymbol{\Sigma }%
\left( \boldsymbol{\theta }\right) \right \vert ^{1/2}}\right) ^{2\tau }%
\frac{1}{\left( 1+\tau \right) ^{m+2}}\Delta _{\tau }^{i}\Delta _{\tau }^{j},
\notag
\end{eqnarray}%
with $\Delta _{\tau }^{i}=\frac{\tau }{2}trace\left( \boldsymbol{\Sigma }%
\left( \boldsymbol{\theta }\right) ^{-1}\dfrac{\partial \boldsymbol{\Sigma }%
\left( \boldsymbol{\theta }\right) }{\partial \theta _{i}}\right) .$

The above asymptotic distribution follows from Theorem 2 in Basu et al. (2018b), after explicitly compute the matrices 
$$\boldsymbol{J}_{\tau }\left( \boldsymbol{\theta }\right)$$
and $\boldsymbol{K}_{\tau }\left( \boldsymbol{\theta }\right)$ there defined for general statistical models.

Additionally, in some situations we may have  additional knowledge about the true
parameter space. For example, the parameter of the exponential or Poisson models must be always positive.
Then, the  parameter space $\Theta$ should be restricted constraints. Here, we will consider restricted parameter spaces of the form
\begin{equation}
\Theta_0 = \left \{ \boldsymbol{\theta \in }\Theta /\text{ }\boldsymbol{g}(\boldsymbol{%
\theta )=0}_{r}\right \} ,  \label{104}
\end{equation}%
where $\boldsymbol{g}:\mathbb{R}^{d}\rightarrow \mathbb{R}^{r}$ is a
vector-valued function mapping such that the $d\times r$ matrix 
\begin{equation}
\mathbf{G}\left( \boldsymbol{\theta }\right) =\frac{\partial \boldsymbol{g}%
^{T}(\boldsymbol{\theta )}}{\partial \boldsymbol{\theta }}  \label{105}
\end{equation}%
exists and is continuous in $\boldsymbol{\theta }$ and rank$\left( \mathbf{G}%
\left( \boldsymbol{\theta }\right) \right) =r$, and $\boldsymbol{0}_{r}$
denotes the null vector of dimension $r$. 
The notation  $\Theta _{0}$ clues the use of the present restricted estimator for defining test statistics under composite null hypothesis.
%The superscript $T$ in (\ref{105}) represents the transpose of the matrix.

The most popular estimator of $\boldsymbol{\theta }$ satisfying the constraints
in (\ref{104}) is the restricted MLE (RMLE), naturally defined as the
maximizer of the loglikelihood function of the model, but subject to the parameter space restrictions $\boldsymbol{g}(\boldsymbol{\theta )}=\boldsymbol{0}_{r}$ (see Silvey, 1975). Unlikely, the RMLE has the same robustness problems than the MLE, and so robust alternatives should be adopted in the presence of contamination in data.
Several robust restricted estimators have been considered in the statistical literature to overcome the robustness drawback of the RMLE. For example,
%restricted estimators based on divergence measures have been developed in 
Pardo et al. (2002) introduced the restricted minimum Phi-divergence estimator  and studied its properties. Basu et al (2018b) presented the restricted minimum
density power divergence estimators (RMDPDE) and studied some applications
of them in testing hypothesis. In Ghosh (2015) the theoretical robustness
properties of the RMDPDE were studied.  Jaenada et al (2022a,
2022b) considered the restricted R\'{e}nyi pseudodistance estimator and from it, they defined Rao-type tests. More recently, Martin (2021) studied the RMDPD under normal distributions and develop independence test under the normal assumption, and later Martin (2023) used the RMDPDE in the context of independent but not identically distributed variables under heterocedastic linear regression models.
% But if it is not easy to get the $f_{\mathbf{\theta }}(\boldsymbol{y}\mathbf{)}$ or it is unknown the previous estimators can not be obtained and for this reason 
In this paper, we introduce and study the restricted minimum density power
divergence Gaussian estimator (RMDPDGE).

The rest of the paper is organized as follows: In Section 2 we introduce the RMDPDGE and we obtain its asymptotic distribution.  Section 3 presents the influence function of the RMDPDGE and theoretically proves the robustness of the proposed estimators. 
Some statistical applications for testing are presented in Section 4 and explicit expression of the Rao-type test statistics based on the RMDPDGEs under exponential and Poisson models are given.
Section 5  empirically demonstrates the robustness of the method trough a  simulation study, and the  advantages and disadvantage of the Gaussian assumption are discussed there.
Section 6 presents some conclusions.
The proofs of the main results stated in the paper are included in an Appendix

\section{Restricted minimum density power divergence Gaussian estimators}

In this section we present the RMDPDGE under general equality non-linear constraints and we study it asymptotic distribution, showing the consistency of the estimator.
%We shall begin giving the definition \ of the RMDPDGE.

\begin{definition}
Let $\boldsymbol{Y}_{1},...,$ $\boldsymbol{Y}_{n}$ be independent and
identically distributed observations from a $m$-dimensional random vector $%
\boldsymbol{Y}$ with $E_{\boldsymbol{\theta }}\left[ \boldsymbol{Y}\right] =%
\boldsymbol{\mu }\left( \boldsymbol{\theta }\right) $ and $Cov_{\boldsymbol{%
\theta }}\left[ \boldsymbol{Y}\right] =\boldsymbol{\Sigma }\left( 
\boldsymbol{\theta }\right) $, $\boldsymbol{\theta }\in \Theta \subset 
\mathbb{R}^{d}$. The RMDPDGE, $\widetilde{\boldsymbol{\theta }}_{G}^{\tau },$
is defined by%
\begin{equation*}
\widetilde{\boldsymbol{\theta }}_{G}^{\tau }=\arg \max_{\Theta_0}H_{n}^{\tau
}\left( \boldsymbol{\theta }\right) ,
\end{equation*}%
where $H_{n}^{\tau }\left( \boldsymbol{\theta }\right) $ is as given in Equation (\ref{eq:Hn}) and $\Theta_0 = \{ \boldsymbol{\theta \in }\Theta /\text{ }\boldsymbol{g}(\boldsymbol{\theta}_{r}) = \boldsymbol{0}_r \}$ is the restricted parameter space defined in (\ref{104}).
\end{definition}

%The main purpose in this Section is to get the asymptotic distribution of $%\widetilde{\boldsymbol{\theta }}_{G}^{\tau }.$
 Before presenting the asymptotic distribution of the MDPDGE we present some previous results whose proofs are included in the Appendix.

\begin{proposition}
\label{Proposition1}Let $\boldsymbol{Y}_{1},...,$ $\boldsymbol{Y}_{n}$ be
independent and identically distributed observations from a $m$-dimensional
random vector $\boldsymbol{Y}$ with $E_{\boldsymbol{\theta }}\left[ 
\boldsymbol{Y}\right] =\boldsymbol{\mu }\left( \boldsymbol{\theta }\right) $
and $Cov_{\boldsymbol{\theta }}\left[ \boldsymbol{Y}\right] =\boldsymbol{%
\Sigma }\left( \boldsymbol{\theta }\right) $, $\boldsymbol{\theta }\in
\Theta \subset \mathbb{R}^{d}.$ Then, 
\begin{equation*}
\sqrt{n}\left( \frac{1}{\tau +1}\frac{\partial H_{n}^{\tau }\left( 
\boldsymbol{\theta }\right) }{\partial \boldsymbol{\theta }}\right) \underset%
{n\longrightarrow \infty }{\overset{\mathcal{L}}{\longrightarrow }}\mathcal{N%
}(\boldsymbol{0}_{d},\boldsymbol{K}_{\boldsymbol{\tau }}(\boldsymbol{\theta }%
)),
\end{equation*}%
where $\boldsymbol{K}_{\boldsymbol{\tau }}(\boldsymbol{\theta })$ was
defined in (\ref{103aaa}).
\end{proposition}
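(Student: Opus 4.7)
The plan is to recognize the gradient $\partial H_{n}^{\tau}(\boldsymbol{\theta})/\partial \boldsymbol{\theta}$ as an average of i.i.d.\ random vectors and then apply the multivariate Lindeberg--L\'evy central limit theorem (via the Cram\'er--Wold device for the $d$-dimensional version), after verifying that the score has mean zero under the assumed Gaussian model and that its covariance equals $(\tau+1)^{2}\boldsymbol{K}_{\boldsymbol{\tau}}(\boldsymbol{\theta})$. The overall structure is standard for M-estimator score-function CLTs; the novelty and bulk of the work lie in evaluating several Gaussian integrals in closed form.

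First I would differentiate (\ref{eq:Hn}) componentwise, using the identities $\partial|\boldsymbol{\Sigma}|^{-\tau/2}/\partial\theta_{i}=-\tfrac{\tau}{2}|\boldsymbol{\Sigma}|^{-\tau/2}\operatorname{tr}(\boldsymbol{\Sigma}^{-1}\partial\boldsymbol{\Sigma}/\partial\theta_{i})$ together with the chain rule applied to the quadratic form $Q_{j}=(\boldsymbol{y}_{j}-\boldsymbol{\mu})^{T}\boldsymbol{\Sigma}^{-1}(\boldsymbol{y}_{j}-\boldsymbol{\mu})$, whose derivative with respect to $\theta_{i}$ involves both $\partial\boldsymbol{\mu}/\partial\theta_{i}$ (linear in $\boldsymbol{y}_{j}-\boldsymbol{\mu}$) and $\partial\boldsymbol{\Sigma}/\partial\theta_{i}$ (quadratic in $\boldsymbol{y}_{j}-\boldsymbol{\mu}$). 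The resulting $i$-th component takes the form $\tfrac{1}{n}\sum_{j}\Psi_{i}(\boldsymbol{Y}_{j},\boldsymbol{\theta})$ for an explicit function $\Psi_{i}$. Second, I would verify $E_{\boldsymbol{\theta}}[\boldsymbol{\Psi}(\boldsymbol{Y},\boldsymbol{\theta})]=\boldsymbol{0}_{d}$ assuming $\boldsymbol{Y}\sim N(\boldsymbol{\mu}(\boldsymbol{\theta}),\boldsymbol{\Sigma}(\boldsymbol{\theta}))$, which reduces to the Gaussian moment identities
\begin{equation*}
E\bigl[e^{-\tau Q/2}\bigr], \qquad E\bigl[e^{-\tau Q/2}(\boldsymbol{Y}-\boldsymbol{\mu})\bigr], \qquad E\bigl[e^{-\tau Q/2}(\boldsymbol{Y}-\boldsymbol{\mu})^{T}\boldsymbol{A}(\boldsymbol{Y}-\boldsymbol{\mu})\bigr],
\end{equation*}
obtained by completing the square in the integrand to reduce to a centered Gaussian with covariance $(1+\tau)^{-1}\boldsymbol{\Sigma}$; the middle moment vanishes by symmetry, and the trace contributions from the remaining two are exactly canceled by the constant $b/\tau$ term built into $H_{n}^{\tau}$.

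The third step, computing the covariance matrix and matching it to $(\tau+1)^{2}\boldsymbol{K}_{\boldsymbol{\tau}}(\boldsymbol{\theta})$, is where I expect the main effort to lie. The product $\Psi_{i}\Psi_{j}$ generates a factor $e^{-\tau Q}$ (rather than $e^{-\tau Q/2}$), which explains why the leading piece of (\ref{103aaa}) carries $(1+2\tau)^{-(m/2+2)}$, while the subtracted term $\Delta_{\tau}^{i}\Delta_{\tau}^{j}(1+\tau)^{-(m+2)}$ originates from the products $E[e^{-\tau Q/2}\,\cdot]\,E[e^{-\tau Q/2}\,\cdot]$ of the mean-zero-enforcing constants. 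After a change of variable reducing to a centered Gaussian with covariance $(1+2\tau)^{-1}\boldsymbol{\Sigma}$, the required fourth-order moments of the form $E[(\boldsymbol{Y}-\boldsymbol{\mu})^{T}\boldsymbol{A}(\boldsymbol{Y}-\boldsymbol{\mu})(\boldsymbol{Y}-\boldsymbol{\mu})^{T}\boldsymbol{B}(\boldsymbol{Y}-\boldsymbol{\mu})]$ can be evaluated via Isserlis' theorem; careful bookkeeping then organizes the sum into exactly the three summands (the $\Delta_{2\tau}^{i}\Delta_{2\tau}^{j}$ piece, the $\partial\boldsymbol{\mu}/\partial\boldsymbol{\theta}$ trace, and the $\partial\boldsymbol{\Sigma}/\partial\boldsymbol{\theta}$ trace) appearing in (\ref{103aaa}). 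With mean zero and the covariance identification in place, the multivariate CLT yields the stated weak convergence.
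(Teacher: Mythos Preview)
Your proposal is correct and follows essentially the same approach as the paper's Appendix A: both recognize the gradient as a sample mean of i.i.d.\ terms, apply the multivariate CLT, verify the mean is zero via Gaussian integrals (completing the square to reduce to a $N(\boldsymbol{\mu},(1+\tau)^{-1}\boldsymbol{\Sigma})$ law), and then compute the covariance entrywise by expanding $\Psi_i\Psi_j$ and evaluating each piece against a $N(\boldsymbol{\mu},(1+2\tau)^{-1}\boldsymbol{\Sigma})$ law, with the fourth-order moments handled by the identity you call Isserlis' theorem and the paper states as its formula 2(b). The only cosmetic difference is that the paper enumerates the nine cross-terms $C_1,\ldots,C_9$ explicitly rather than appealing to the structural explanation you give for the $(1+2\tau)^{-(m/2+2)}$ versus $(1+\tau)^{-(m+2)}$ pieces.
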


\begin{proof}
See Appendix A.
\end{proof}

\begin{proposition}
\label{Proposition2}Let $\boldsymbol{Y}_{1},...,$ $\boldsymbol{Y}_{n}$ be
independent and identically distributed observations from a $m$-dimensional
random vector $\boldsymbol{Y}$ with $E_{\boldsymbol{\theta }}\left[ 
\boldsymbol{Y}\right] =\boldsymbol{\mu }\left( \boldsymbol{\theta }\right) $
and $Cov_{\boldsymbol{\theta }}\left[ \boldsymbol{Y}\right] =\boldsymbol{%
\Sigma }\left( \boldsymbol{\theta }\right) $, $\boldsymbol{\theta }\in
\Theta \subset \mathbb{R}^{d}.$ Then,%
\begin{equation*}
\frac{\partial ^{2}H_{n}^{\tau }\left( \boldsymbol{\theta }\right) }{%
\partial \boldsymbol{\theta \partial \theta }^{T}}\underset{n\longrightarrow
\infty }{\overset{\mathcal{P}}{\longrightarrow }}-\left( \tau +1\right) 
\boldsymbol{J}_{\boldsymbol{\tau }}(\boldsymbol{\theta }),
\end{equation*}%
where $\boldsymbol{J}_{\boldsymbol{\tau }}(\boldsymbol{\theta })$ was
defined in (\ref{103aa}).
\end{proposition}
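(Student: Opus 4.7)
The plan is to decompose the Hessian of $H_{n}^{\tau}$ into sample averages of i.i.d.\ functions of the data, apply the weak law of large numbers, and then explicitly evaluate the resulting Gaussian expectations. Writing
$H_{n}^{\tau}(\boldsymbol{\theta}) = a\,|\boldsymbol{\Sigma}(\boldsymbol{\theta})|^{-\tau/2}\bigl[\bar{\Phi}_n(\boldsymbol{\theta}) - b\bigr] - 1/\tau$
with $\bar{\Phi}_n(\boldsymbol{\theta}) = n^{-1}\sum_{i=1}^{n}\Phi_i(\boldsymbol{\theta})$ and
\[
\Phi_i(\boldsymbol{\theta}) = \exp\!\Bigl\{-\tfrac{\tau}{2}(\boldsymbol{y}_i - \boldsymbol{\mu}(\boldsymbol{\theta}))^T \boldsymbol{\Sigma}(\boldsymbol{\theta})^{-1}(\boldsymbol{y}_i - \boldsymbol{\mu}(\boldsymbol{\theta}))\Bigr\},
\]
two applications of the product rule will express $\partial^{2}H_{n}^{\tau}/\partial\theta_{i}\partial\theta_{j}$ as a linear combination, with deterministic coefficients involving derivatives of $|\boldsymbol{\Sigma}(\boldsymbol{\theta})|^{-\tau/2}$, of the four sample means $\bar{\Phi}_n$, $\partial\bar{\Phi}_n/\partial\theta_i$, $\partial\bar{\Phi}_n/\partial\theta_j$ and $\partial^{2}\bar{\Phi}_n/\partial\theta_i\partial\theta_j$.

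Each of these sample means is an arithmetic mean of i.i.d.\ functions of $\boldsymbol{Y}_i$ whose expectations exist under the assumed Gaussian model, since the Gaussian tails are tempered by the factor $\exp\{-\tau Q/2\}$. The weak law of large numbers then yields convergence in probability of each sample mean to its population counterpart, and the continuous mapping theorem delivers convergence of $\partial^{2}H_{n}^{\tau}/\partial\theta_i\partial\theta_j$ to a deterministic matrix. It remains to identify this limit with the entry $-(\tau+1)J_{\tau}^{ij}(\boldsymbol{\theta})$ of (\ref{103aa}).

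The main computation is the evaluation of $E[\Phi]$, $E[\partial\Phi/\partial\theta_i]$ and $E[\partial^{2}\Phi/\partial\theta_i\partial\theta_j]$ under $\boldsymbol{Y}\sim\mathcal{N}(\boldsymbol{\mu}(\boldsymbol{\theta}),\boldsymbol{\Sigma}(\boldsymbol{\theta}))$. The pivotal identity is
\[
\exp\!\Bigl\{-\tfrac{\tau}{2}(\boldsymbol{y}-\boldsymbol{\mu})^T \boldsymbol{\Sigma}^{-1}(\boldsymbol{y}-\boldsymbol{\mu})\Bigr\}\,\phi(\boldsymbol{y};\boldsymbol{\mu},\boldsymbol{\Sigma}) = (1+\tau)^{-m/2}\,\phi\!\left(\boldsymbol{y};\boldsymbol{\mu},\tfrac{\boldsymbol{\Sigma}}{1+\tau}\right),
\]
which reduces $E[\Phi\cdot h(\boldsymbol{Y})]$ to $(1+\tau)^{-m/2}$ times the Gaussian moment of $h$ under the contracted covariance $\boldsymbol{\Sigma}/(1+\tau)$. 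Combined with Isserlis' formulas for moments of quadratic forms in $\boldsymbol{Y}-\boldsymbol{\mu}$ and the chain-rule expansion
\[
\partial Q/\partial\theta_i = -2(\boldsymbol{y}-\boldsymbol{\mu})^T \boldsymbol{\Sigma}^{-1}\partial\boldsymbol{\mu}/\partial\theta_i - (\boldsymbol{y}-\boldsymbol{\mu})^T \boldsymbol{\Sigma}^{-1}(\partial\boldsymbol{\Sigma}/\partial\theta_i)\boldsymbol{\Sigma}^{-1}(\boldsymbol{y}-\boldsymbol{\mu}),
\]
each required expectation reduces to traces in $\boldsymbol{\Sigma}^{-1}$, $\partial\boldsymbol{\mu}/\partial\theta_i$, $\partial\boldsymbol{\Sigma}/\partial\theta_i$ and their second derivatives. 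The second-derivative terms $\partial^{2}\boldsymbol{\mu}/\partial\theta_i\partial\theta_j$ and $\partial^{2}\boldsymbol{\Sigma}/\partial\theta_i\partial\theta_j$ must cancel between the contribution coming from $\partial^{2}|\boldsymbol{\Sigma}|^{-\tau/2}/\partial\theta_i\partial\theta_j$ and that coming from $E[\partial^{2}\Phi/\partial\theta_i\partial\theta_j]$ — an instance of the information-matrix identity for the density power divergence under correct specification — leaving precisely the three summands $(\tau+1)\,\mathrm{trace}\bigl(\boldsymbol{\Sigma}^{-1}\partial\boldsymbol{\mu}/\partial\theta_i(\partial\boldsymbol{\mu}/\partial\theta_j)^T\bigr)$, $\Delta_\tau^i\Delta_\tau^j$, and $\tfrac{1}{2}\,\mathrm{trace}\bigl(\boldsymbol{\Sigma}^{-1}(\partial\boldsymbol{\Sigma}/\partial\theta_j)\boldsymbol{\Sigma}^{-1}(\partial\boldsymbol{\Sigma}/\partial\theta_i)\bigr)$ of (\ref{103aa}), scaled by the pre-factor $\bigl((2\pi)^{-m/2}|\boldsymbol{\Sigma}|^{-1/2}\bigr)^{\tau}(1+\tau)^{-(m/2+2)}$.

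The main obstacle is precisely this cancellation and the bookkeeping required to match the exact coefficients in (\ref{103aa}): no single step is deep, but many similar-looking terms must be tracked and combined, and the nuisance second derivatives $\partial^{2}\boldsymbol{\mu}/\partial\theta_i\partial\theta_j$, $\partial^{2}\boldsymbol{\Sigma}/\partial\theta_i\partial\theta_j$ must be carefully eliminated. A useful cross-check, which I would carry out in parallel, is to observe that $-H_{n}^{\tau}-1/\tau$ coincides, up to a multiplicative constant, with the Basu et al.\ (1998) density power divergence loss applied to the Gaussian family; the proposition is then the Gaussian specialization of the general MDPDE Hessian-convergence result of Basu et al.\ (2018b, Theorem~2), and only the reduction of the generic integral $\int \boldsymbol{u}_{\boldsymbol{\theta}}\boldsymbol{u}_{\boldsymbol{\theta}}^{T} f_{\boldsymbol{\theta}}^{1+\tau}\,d\boldsymbol{y}$ to the explicit form (\ref{103aa}) remains — itself a direct Gaussian computation via the same identity above.
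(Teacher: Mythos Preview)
Your proposal is correct and follows essentially the same route as the paper's Appendix~B: explicit second differentiation of $H_n^\tau$, reduction to sample averages, the weak law of large numbers, and evaluation of the resulting Gaussian expectations via the same contraction identity $E[\Phi\cdot h(\boldsymbol{Y})]=(1+\tau)^{-m/2}E_{\mathcal{N}(\boldsymbol{\mu},\boldsymbol{\Sigma}/(1+\tau))}[h(\boldsymbol{Y})]$, followed by algebraic cancellation of the nuisance second-derivative terms. Your organization (grouping by $\bar{\Phi}_n$ and its derivatives rather than the paper's $L_1,L_2,D_i,C_i,A_i^\ast$ decomposition) is somewhat cleaner, and the cross-check via Basu et al.\ (2018b) is a useful sanity check the paper does not invoke here, but the substance is the same.
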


\begin{proof}
See Appendix B.
\end{proof}

Next, we present the asymptotic distribution of \ $\widetilde{\boldsymbol{\theta }}_{G}^{\tau }.$ %under the regularity conditions stated in Propositions \ref{Proposition1} and \ref{Proposition2}.

\begin{theorem}
Let $\boldsymbol{Y}_{1},...,$ $\boldsymbol{Y}_{n}$ be independent and
identically distributed observations from a $m$-dimensional random vector $%
\boldsymbol{Y}$ with $E_{\boldsymbol{\theta }}\left[ \boldsymbol{Y}\right] =%
\boldsymbol{\mu }\left( \boldsymbol{\theta }\right) $ and $Cov_{\boldsymbol{%
\theta }}\left[ \boldsymbol{Y}\right] =\boldsymbol{\Sigma }\left( 
\boldsymbol{\theta }\right) $, $\boldsymbol{\theta }\in \Theta \subset 
\mathbb{R}^{d}.$ Suppose the true distribution of $\boldsymbol{Y}$ belongs
to the model and we consider $\boldsymbol{\theta }\in \boldsymbol{\Theta }%
_{0}$. Then the RMDPDGE $\widetilde{\boldsymbol{\theta }}_{G}^{\tau }$ of $%
\boldsymbol{\theta }$ obtained under the constraints $\boldsymbol{g}(%
\boldsymbol{\theta })=\boldsymbol{0}_{r}$ has the asymptotic distribution, 
\begin{equation}
n^{1/2}(\widetilde{\boldsymbol{\theta }}_{G}^{\tau }-\boldsymbol{\theta })%
\underset{n\longrightarrow \infty }{\overset{\mathcal{L}}{\longrightarrow }}%
\mathcal{N}(\boldsymbol{0}_{d},\boldsymbol{M}_{\boldsymbol{\tau }}(%
\boldsymbol{\theta }))  \label{105a}
\end{equation}%
where 
\begin{equation*}
\boldsymbol{M}_{\boldsymbol{\tau }}(\boldsymbol{\theta })=\boldsymbol{P}_{%
\boldsymbol{\tau }}^{\ast }(\boldsymbol{\theta })\boldsymbol{K}_{\boldsymbol{%
\tau }}\left( \boldsymbol{\theta }\right) \boldsymbol{P}_{\boldsymbol{\tau }%
}^{\ast }(\boldsymbol{\theta })^{T},
\end{equation*}%
\begin{equation}
\boldsymbol{P}_{\boldsymbol{\tau }}^{\ast }(\boldsymbol{\theta })=%
\boldsymbol{J}_{\boldsymbol{\tau }}(\boldsymbol{\theta })^{-1}-\boldsymbol{Q}%
_{\boldsymbol{\tau }}(\boldsymbol{\theta })\boldsymbol{G}(\boldsymbol{\theta 
})^{T}\boldsymbol{J}_{\boldsymbol{\tau }}(\boldsymbol{\theta })^{-1},
\label{106}
\end{equation}%
\begin{equation}
\boldsymbol{Q}_{\boldsymbol{\tau }}(\boldsymbol{\theta })=\boldsymbol{J}_{%
\boldsymbol{\tau }}^{-1}(\boldsymbol{\theta })\boldsymbol{G}(\boldsymbol{%
\theta })\left[ \boldsymbol{G}(\boldsymbol{\theta })^{T}\boldsymbol{J}_{%
\boldsymbol{\tau }}(\boldsymbol{\theta })^{-1}\boldsymbol{G}(\boldsymbol{%
\theta })\right] ^{-1},  \label{107}
\end{equation}%
and $\boldsymbol{J}_{\boldsymbol{\tau }}(\boldsymbol{\theta })$ and $%
\boldsymbol{K}_{\boldsymbol{\tau }}\left( \boldsymbol{\theta }\right) $ were
defined in (\ref{103aa}) and (\ref{103aaa}), respectively.
\end{theorem}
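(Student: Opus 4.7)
The plan is to exploit the Lagrangian formulation of the constrained optimisation problem and then apply the two auxiliary convergence results, Propositions \ref{Proposition1} and \ref{Proposition2}, essentially in the same way the asymptotic distribution of the unrestricted MDPDGE is obtained. Writing the Lagrangian as
$$
\mathcal{L}_{n}(\boldsymbol{\theta},\boldsymbol{\lambda})=H_{n}^{\tau}(\boldsymbol{\theta})+\boldsymbol{\lambda}^{T}\boldsymbol{g}(\boldsymbol{\theta}),
$$
the restricted estimator $\widetilde{\boldsymbol{\theta}}_{G}^{\tau}$ and its associated multiplier $\widetilde{\boldsymbol{\lambda}}_{n}$ satisfy the first-order conditions $\partial H_{n}^{\tau}(\widetilde{\boldsymbol{\theta}}_{G}^{\tau})/\partial\boldsymbol{\theta}+\boldsymbol{G}(\widetilde{\boldsymbol{\theta}}_{G}^{\tau})\widetilde{\boldsymbol{\lambda}}_{n}=\boldsymbol{0}_{d}$ and $\boldsymbol{g}(\widetilde{\boldsymbol{\theta}}_{G}^{\tau})=\boldsymbol{0}_{r}$. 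I would first establish (or invoke) the consistency $\widetilde{\boldsymbol{\theta}}_{G}^{\tau}\xrightarrow{\mathcal{P}}\boldsymbol{\theta}$ on the restricted parameter space $\Theta_{0}$, which is the prerequisite that legitimises all the linearisations below.

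Next, I would Taylor expand the gradient equation around the true parameter $\boldsymbol{\theta}\in\Theta_{0}$. Using Proposition \ref{Proposition2} to replace the Hessian evaluated at an intermediate point by $-(\tau+1)\boldsymbol{J}_{\boldsymbol{\tau}}(\boldsymbol{\theta})+o_{p}(1)$, continuity of $\boldsymbol{G}$ to replace $\boldsymbol{G}(\widetilde{\boldsymbol{\theta}}_{G}^{\tau})$ by $\boldsymbol{G}(\boldsymbol{\theta})$, and multiplying by $\sqrt{n}/(\tau+1)$, one arrives at
$$
\sqrt{n}(\widetilde{\boldsymbol{\theta}}_{G}^{\tau}-\boldsymbol{\theta})=\boldsymbol{J}_{\boldsymbol{\tau}}(\boldsymbol{\theta})^{-1}\boldsymbol{U}_{n}+\boldsymbol{J}_{\boldsymbol{\tau}}(\boldsymbol{\theta})^{-1}\boldsymbol{G}(\boldsymbol{\theta})\,\frac{\sqrt{n}\,\widetilde{\boldsymbol{\lambda}}_{n}}{\tau+1}+o_{p}(1),
$$
where $\boldsymbol{U}_{n}:=\dfrac{\sqrt{n}}{\tau+1}\dfrac{\partial H_{n}^{\tau}(\boldsymbol{\theta})}{\partial\boldsymbol{\theta}}$. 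In parallel I would Taylor expand the constraint $\boldsymbol{g}(\widetilde{\boldsymbol{\theta}}_{G}^{\tau})=\boldsymbol{0}_{r}$ around $\boldsymbol{\theta}$ and use $\boldsymbol{g}(\boldsymbol{\theta})=\boldsymbol{0}_{r}$ to obtain $\boldsymbol{G}(\boldsymbol{\theta})^{T}\sqrt{n}(\widetilde{\boldsymbol{\theta}}_{G}^{\tau}-\boldsymbol{\theta})=o_{p}(1)$.

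Premultiplying the displayed identity by $\boldsymbol{G}(\boldsymbol{\theta})^{T}$ and using the constraint expansion, the rank-$r$ assumption on $\boldsymbol{G}$ guarantees invertibility of $\boldsymbol{G}^{T}\boldsymbol{J}_{\boldsymbol{\tau}}^{-1}\boldsymbol{G}$, so I can solve for the rescaled multiplier,
$$
\frac{\sqrt{n}\,\widetilde{\boldsymbol{\lambda}}_{n}}{\tau+1}=-\bigl[\boldsymbol{G}(\boldsymbol{\theta})^{T}\boldsymbol{J}_{\boldsymbol{\tau}}(\boldsymbol{\theta})^{-1}\boldsymbol{G}(\boldsymbol{\theta})\bigr]^{-1}\boldsymbol{G}(\boldsymbol{\theta})^{T}\boldsymbol{J}_{\boldsymbol{\tau}}(\boldsymbol{\theta})^{-1}\boldsymbol{U}_{n}+o_{p}(1),
$$
and substitute it back. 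The algebra then collapses to
$$
\sqrt{n}(\widetilde{\boldsymbol{\theta}}_{G}^{\tau}-\boldsymbol{\theta})=\bigl\{\boldsymbol{J}_{\boldsymbol{\tau}}^{-1}-\boldsymbol{Q}_{\boldsymbol{\tau}}\boldsymbol{G}^{T}\boldsymbol{J}_{\boldsymbol{\tau}}^{-1}\bigr\}\boldsymbol{U}_{n}+o_{p}(1)=\boldsymbol{P}_{\boldsymbol{\tau}}^{\ast}(\boldsymbol{\theta})\boldsymbol{U}_{n}+o_{p}(1),
$$
with $\boldsymbol{Q}_{\boldsymbol{\tau}}(\boldsymbol{\theta})$ and $\boldsymbol{P}_{\boldsymbol{\tau}}^{\ast}(\boldsymbol{\theta})$ being exactly the matrices defined in (\ref{107}) and (\ref{106}). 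Finally, Proposition \ref{Proposition1} asserts $\boldsymbol{U}_{n}\xrightarrow{\mathcal{L}}\mathcal{N}(\boldsymbol{0}_{d},\boldsymbol{K}_{\boldsymbol{\tau}}(\boldsymbol{\theta}))$, and Slutsky's theorem delivers the announced limit law $\mathcal{N}(\boldsymbol{0}_{d},\boldsymbol{P}_{\boldsymbol{\tau}}^{\ast}\boldsymbol{K}_{\boldsymbol{\tau}}(\boldsymbol{P}_{\boldsymbol{\tau}}^{\ast})^{T})$.

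The genuinely delicate step, and therefore the one I would spend the most care on, is the preliminary consistency of $\widetilde{\boldsymbol{\theta}}_{G}^{\tau}$ on $\Theta_{0}$: all the linearisations rely on the Taylor remainders being $o_{p}(n^{-1/2})$, which in turn requires $\widetilde{\boldsymbol{\theta}}_{G}^{\tau}-\boldsymbol{\theta}=o_{p}(1)$. I would obtain this via the standard Wald-type argument, showing that $\frac{1}{n}H_{n}^{\tau}(\boldsymbol{\theta})$ converges uniformly on compacta to a deterministic limit $H^{\tau}(\boldsymbol{\theta})$ whose unique maximum on $\Theta_{0}$ is the true $\boldsymbol{\theta}$, an identifiability fact that follows from the DPD representation in (\ref{eq:Hn}) under the Gaussian working model. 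By contrast, the matrix manipulation that turns the Lagrangian FOCs into $\boldsymbol{P}_{\boldsymbol{\tau}}^{\ast}$ is purely algebraic and mirrors the derivation already available for the restricted MLE and the RMDPDE in Basu et al.\ (2018b).
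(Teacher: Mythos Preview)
Your proposal is correct and follows essentially the same route as the paper: Lagrangian first-order conditions, Taylor expansion of the score and of the constraint, replacement of the Hessian by $-(\tau+1)\boldsymbol{J}_{\boldsymbol{\tau}}$ via Proposition~\ref{Proposition2}, and conclusion through Proposition~\ref{Proposition1}. The only cosmetic difference is that the paper packages the two linearised equations into a single $2\times 2$ block system and inverts it explicitly to read off $\boldsymbol{P}_{\boldsymbol{\tau}}^{\ast}$, whereas you premultiply by $\boldsymbol{G}^{T}$, solve for the rescaled multiplier, and substitute back; the algebra is identical. Your explicit attention to the preliminary consistency step is in fact more careful than the paper, which essentially assumes $\sqrt{n}$-boundedness of $\widetilde{\boldsymbol{\theta}}_{G}^{\tau}-\boldsymbol{\theta}$ without a separate argument.
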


\begin{proof}
The estimating equations for the RMDPDGE are given by 
\begin{equation}
\left \{ 
\begin{array}{r}
\tfrac{\partial }{\partial \boldsymbol{\theta }}H_{n}^{\tau }(\boldsymbol{%
\theta })+\boldsymbol{G}(\boldsymbol{\theta })\boldsymbol{\lambda }_{n}=%
\boldsymbol{0}_{d}, \\ 
\boldsymbol{g}(\widetilde{\boldsymbol{\theta }}_{G}^{\tau })=\boldsymbol{0}%
_{r},%
\end{array}%
\right.  \label{108}
\end{equation}%
where $\boldsymbol{\lambda }_{n}$ is a vector of Lagrangian multipliers. Now
we consider $\boldsymbol{\theta }_{n}=\boldsymbol{\theta }+\boldsymbol{m}%
n^{-1/2}$, where $||\boldsymbol{m}||<k$, for $0<k<\infty $. We have, 
\begin{equation*}
\frac{\partial }{\partial \boldsymbol{\theta }}\left. H_{n}^{\tau }(%
\boldsymbol{\theta })\right \vert _{\boldsymbol{\theta =\theta }_{n}}=\frac{%
\partial }{\partial \boldsymbol{\theta }}H_{n}^{\tau }(\boldsymbol{\theta })+%
\frac{\partial ^{2}}{\partial \boldsymbol{\theta }^{T}\partial \boldsymbol{%
\theta }}\left. H_{n}^{\tau }(\boldsymbol{\theta })\right \vert _{%
\boldsymbol{\theta =\theta }_{\ast }}(\boldsymbol{\theta }_{n}^{\ast }-%
\boldsymbol{\theta })
\end{equation*}%
and 
\begin{equation}
n^{1/2}\left. \frac{\partial }{\partial \boldsymbol{\theta }}H_{n}^{\tau }(%
\boldsymbol{\theta })\right \vert _{\boldsymbol{\theta =\theta }_{n}}=n^{1/2}%
\frac{\partial }{\partial \boldsymbol{\theta }}H_{n}^{\tau }(\boldsymbol{%
\theta })+\frac{\partial ^{2}}{\partial \boldsymbol{\theta }^{T}\partial 
\boldsymbol{\theta }}\left. H_{n}^{\tau }(\boldsymbol{\theta })\right \vert
_{\boldsymbol{\theta =\theta }_{\ast }}n^{1/2}(\boldsymbol{\theta }_{n}-%
\boldsymbol{\theta })  \label{109}
\end{equation}%
where $\boldsymbol{\theta }^{\ast }$ belongs to the segment joining $%
\boldsymbol{\theta }$ and $\boldsymbol{\theta }_{n}$

Since 
\begin{equation*}
\lim_{n\rightarrow \infty }\frac{\partial ^{2}}{\partial \boldsymbol{\theta }%
^{T}\partial \boldsymbol{\theta }}H_{n}^{\tau }(\boldsymbol{\theta }%
)=-\left( \tau +1\right) \boldsymbol{J}_{\boldsymbol{\tau }}(\boldsymbol{%
\theta })
\end{equation*}%
we obtain 
\begin{equation}
n^{1/2}\left. \frac{\partial }{\partial \boldsymbol{\theta }}H_{n}^{\tau }(%
\boldsymbol{\theta })\right \vert _{\boldsymbol{\theta =\theta }_{n}}=n^{1/2}%
\frac{\partial }{\partial \boldsymbol{\theta }}H_{n}^{\tau }(\boldsymbol{%
\theta })-\left( \tau +1\right) n^{1/2}\boldsymbol{J}_{\boldsymbol{\tau }}(%
\boldsymbol{\theta })(\boldsymbol{\theta }_{n}-\boldsymbol{\theta }%
)+o_{p}(1).  \label{110}
\end{equation}%
Taking into account that $\boldsymbol{G}(\boldsymbol{\theta })$ is
continuous in $\boldsymbol{\theta }$ 
\begin{equation}
n^{1/2}\boldsymbol{g}(\boldsymbol{\theta }_{n})=\boldsymbol{G}(\boldsymbol{%
\theta })^{T}n^{1/2}(\boldsymbol{\theta }_{n}-\boldsymbol{\theta })+o_{p}(1).
\label{111}
\end{equation}

The RMDPDGE $\widetilde{\boldsymbol{\theta }}_{G}^{\tau }$ must satisfy the
conditions in (\ref{108}), and in view of (\ref{110}) we have 
\begin{equation}
n^{1/2}\frac{\partial }{\partial \boldsymbol{\theta }}H_{n}^{\tau }(%
\boldsymbol{\theta })-\left( \tau +1\right) \boldsymbol{J}_{\boldsymbol{\tau 
}}(\boldsymbol{\theta })n^{1/2}(\widetilde{\boldsymbol{\theta }}_{G}^{\tau }-%
\boldsymbol{\theta })+\boldsymbol{G}(\boldsymbol{\theta })n^{1/2}\boldsymbol{%
\lambda }_{n}+o_{p}(1)=\boldsymbol{0}_{p}.  \label{113}
\end{equation}%
From (\ref{111}) it follows that 
\begin{equation}
\boldsymbol{G}^{T}(\boldsymbol{\theta })n^{1/2}(\widetilde{\boldsymbol{%
\theta }}_{G}^{\tau }-\boldsymbol{\theta })+o_{p}(1)=\boldsymbol{0}_{r}.
\label{114}
\end{equation}%
Now we can express equations (\ref{113}) and (\ref{114}) in the matrix form
as 
\begin{equation*}
\left( 
\begin{array}{cc}
\left( \tau +1\right) \boldsymbol{J}_{\boldsymbol{\tau }}(\boldsymbol{\theta 
}) & -\boldsymbol{G}(\boldsymbol{\theta }) \\ 
-\boldsymbol{G}^{T}(\boldsymbol{\theta }) & \boldsymbol{0}%
\end{array}%
\right) \left( 
\begin{array}{c}
n^{1/2}(\widetilde{\boldsymbol{\theta }}_{G}^{\tau }-\boldsymbol{\theta })
\\ 
n^{1/2}\boldsymbol{\lambda }_{n}%
\end{array}%
\right) =\left( 
\begin{array}{c}
n^{1/2}\frac{\partial }{\partial \boldsymbol{\theta }}H_{n}^{\tau }(%
\boldsymbol{\theta }) \\ 
\boldsymbol{0}_{{}}%
\end{array}%
\right) +o_{p}(1).
\end{equation*}%
Therefore 
\begin{equation*}
\left( 
\begin{array}{c}
n^{1/2}(\widetilde{\boldsymbol{\theta }}_{G}^{\tau }-\boldsymbol{\theta })
\\ 
n^{1/2}\boldsymbol{\lambda }_{n}%
\end{array}%
\right) =\left( 
\begin{array}{cc}
\left( \tau +1\right) \boldsymbol{J}_{\boldsymbol{\tau }}(\boldsymbol{\theta 
}) & -\boldsymbol{G}(\boldsymbol{\theta }) \\ 
-\boldsymbol{G}^{T}(\boldsymbol{\theta }) & \boldsymbol{0}%
\end{array}%
\right) ^{-1}\left( 
\begin{array}{c}
n^{1/2}\frac{\partial }{\partial \boldsymbol{\theta }}H_{n}^{\tau }(%
\boldsymbol{\theta }) \\ 
\boldsymbol{0}_{r}%
\end{array}%
\right) +o_{p}(1).
\end{equation*}%
But 
\begin{equation*}
\left( 
\begin{array}{cc}
\left( \tau +1\right) \boldsymbol{J}_{\boldsymbol{\tau }}(\boldsymbol{\theta 
}) & -\boldsymbol{G}(\boldsymbol{\theta }) \\ 
-\boldsymbol{G}^{T}(\boldsymbol{\theta }) & \boldsymbol{0}%
\end{array}%
\right) ^{-1}={\left( 
\begin{array}{cc}
\boldsymbol{L}_{\tau }^{\ast }(\boldsymbol{\theta }) & \boldsymbol{Q}_{\tau
}(\boldsymbol{\theta }) \\ 
\boldsymbol{Q}_{\tau }(\boldsymbol{\theta }_{0})^{T} & \boldsymbol{R}_{\tau
}(\boldsymbol{\theta })%
\end{array}%
\right) },
\end{equation*}%
where 
\begin{eqnarray*}
\boldsymbol{L}_{\tau }^{\ast }(\boldsymbol{\theta }) &=&\frac{1}{\tau +1}%
\left( \boldsymbol{J}_{\boldsymbol{\tau }}(\boldsymbol{\theta })^{-1}-%
\boldsymbol{Q}_{\boldsymbol{\tau }}(\boldsymbol{\theta })\boldsymbol{G}(%
\boldsymbol{\theta })^{T}\boldsymbol{J}_{\boldsymbol{\tau }}(\boldsymbol{%
\theta })^{-1}\right) \\
&=&\frac{1}{\tau +1}\boldsymbol{P}_{\tau }^{\ast }(\boldsymbol{\theta }) \\
\boldsymbol{Q}_{\tau }(\boldsymbol{\theta }) &=&\boldsymbol{J}_{\boldsymbol{%
\tau }}^{-1}(\boldsymbol{\theta })\boldsymbol{G}(\boldsymbol{\theta })\left[ 
\boldsymbol{G}(\boldsymbol{\theta })^{T}\boldsymbol{J}_{\boldsymbol{\tau }}(%
\boldsymbol{\theta })^{-1}\boldsymbol{G}(\boldsymbol{\theta })\right] ^{-1}
\\
\boldsymbol{R}_{\tau }(\boldsymbol{\theta }) &=&\boldsymbol{G}(\boldsymbol{%
\theta })^{T}\boldsymbol{J}_{\boldsymbol{\tau }}(\boldsymbol{\theta })^{-1}%
\boldsymbol{G}(\boldsymbol{\theta })
\end{eqnarray*}%
and $\boldsymbol{P}_{\tau }^{\ast }(\boldsymbol{\theta }_{0})$ and $%
\boldsymbol{Q}_{\tau }(\boldsymbol{\theta }_{0})$ are as given in (\ref{106}%
) and (\ref{107}) respectively. Then, 
\begin{equation}
n^{1/2}(\widetilde{\boldsymbol{\theta }}_{G}^{\tau }-\boldsymbol{\theta }%
)=\left( \tau +1\right) ^{-1}\boldsymbol{P}_{\tau }^{\ast }(\boldsymbol{%
\theta })n^{1/2}\frac{\partial }{\partial \boldsymbol{\theta }}H_{n}^{\tau }(%
\boldsymbol{\theta })+o_{p}(1),  \label{115}
\end{equation}%
and we know by Proposition \ref{Proposition1} that 
\begin{equation}
n^{1/2}\left( \tau +1\right) ^{-1}\frac{\partial }{\partial \boldsymbol{%
\theta }}H_{n}^{\tau }(\boldsymbol{\theta })\underset{n\longrightarrow
\infty }{\overset{\mathcal{L}}{\longrightarrow }}\mathcal{N}\left( 
\boldsymbol{0},\boldsymbol{K}_{\boldsymbol{\tau }}\left( \boldsymbol{\theta }%
\right) \right) .  \label{116}
\end{equation}%
Now by (\ref{115}) and (\ref{116}) we have the desired result presented in (%
\ref{105a}).
\end{proof}

\begin{remark}
Notice that the result in (\ref{103a}) is a special case of the previous
theorem when there is no restriction on the parametric space, in the sense
that $\boldsymbol{G}$, defined in (\ref{105}), is the null matrix. In this
case the matrix $\boldsymbol{P}_{\boldsymbol{\tau }}^{\ast }(\boldsymbol{%
\theta })$ given in (\ref{106}) becomes $\boldsymbol{P}_{\boldsymbol{\tau }%
}^{\ast }(\boldsymbol{\theta })=\boldsymbol{J}_{\boldsymbol{\tau }}(%
\boldsymbol{\theta })^{-1}.$ Therefore, the asymptotic variance-covariance
matrix of the unrestricted estimator, i.e., the MDPDGE, may be reconstructed
from the previous theorem.
\end{remark}

The MDPDGE is an optimum of a differentiable function $H_n^\tau$, so it must annul  it first derivatives.
Using that 
\begin{itemize}
	\item $\dfrac{\partial \left \vert \boldsymbol{\Sigma }\left( \boldsymbol{%
			\theta }\right) \right \vert ^{-\tau /2}}{\partial \boldsymbol{\theta }}=-%
	\frac{\tau }{2}\left \vert \boldsymbol{\Sigma }\left( \boldsymbol{\theta }%
	\right) \right \vert ^{-\tau /2}trace\left( \boldsymbol{\Sigma }\left( 
	\boldsymbol{\theta }\right) ^{-1}\dfrac{\partial \boldsymbol{\Sigma }\left( 
		\boldsymbol{\theta }\right) }{\partial \boldsymbol{\theta }}\right) $
	
	\item $\frac{\partial }{\partial \boldsymbol{\theta }}\left( \boldsymbol{y}%
	_{i}-\boldsymbol{\mu }\left( \boldsymbol{\theta }\right) \right) ^{T}%
	\boldsymbol{\Sigma }\left( \boldsymbol{\theta }\right) ^{-1}\left( 
	\boldsymbol{y}_{i}-\boldsymbol{\mu }\left( \boldsymbol{\theta }\right)
	\right) =-2\left( \frac{\partial \boldsymbol{\mu }\left( \boldsymbol{\theta }%
		\right) }{\partial \boldsymbol{\theta }}\right) ^{T}\boldsymbol{\Sigma }%
	\left( \boldsymbol{\theta }\right) ^{-1}\left( \boldsymbol{y}_{i}-%
	\boldsymbol{\mu }\left( \boldsymbol{\theta }\right) \right) -\left( 
	\boldsymbol{y}_{i}-\boldsymbol{\mu }\left( \boldsymbol{\theta }\right)
	\right) ^{T}\left( \boldsymbol{\Sigma }\left( \boldsymbol{\theta }\right)
	^{-1}\dfrac{\partial \boldsymbol{\Sigma }\left( \boldsymbol{\theta }\right) 
	}{\partial \boldsymbol{\theta }}\boldsymbol{\Sigma }\left( \boldsymbol{%
		\theta }\right) ^{-1}\right) \left( \boldsymbol{y}_{i}-\boldsymbol{\mu }%
	\left( \boldsymbol{\theta }\right) \right) ,$
\end{itemize}
and taking derivatives in (\ref{eq:Hn}), for a certain fixed $\tau,$ we have that 
\begin{eqnarray*}
	\frac{\partial }{\partial \boldsymbol{\theta }}H_{n}^{\tau }(\boldsymbol{%
		\theta }) &=&\frac{1}{n}\tsum \limits_{i=1}^{n}\left \{ -a\text{ }\frac{\tau 
	}{2}\left \vert \boldsymbol{\Sigma }\left( \boldsymbol{\theta }\right)
	\right \vert ^{-\tau /2}trace\left( \boldsymbol{\Sigma }\left( \boldsymbol{%
		\theta }\right) ^{-1}\dfrac{\partial \boldsymbol{\Sigma }\left( \boldsymbol{%
			\theta }\right) }{\partial \boldsymbol{\theta }}\right) \right. \\
	&&\left. \exp \left( -\frac{\tau }{2}\left( \boldsymbol{y}_{i}-\boldsymbol{%
		\mu }\left( \boldsymbol{\theta }\right) \right) ^{T}\boldsymbol{\Sigma }%
	\left( \boldsymbol{\theta }\right) ^{-1}\left( \boldsymbol{y}_{i}-%
	\boldsymbol{\mu }\left( \boldsymbol{\theta }\right) \right) \right) +ba\text{
	}\frac{\tau }{2}\left \vert \boldsymbol{\Sigma }\left( \boldsymbol{\theta }%
	\right) \right \vert ^{-\tau /2}\text{ }\right. \\
	&&\left. trace\left( \boldsymbol{\Sigma }\left( \boldsymbol{\theta }\right)
	^{-1}\dfrac{\partial \boldsymbol{\Sigma }\left( \boldsymbol{\theta }\right) 
	}{\partial \boldsymbol{\theta }}\right) +a\text{ }\frac{\tau }{2}\left \vert 
	\boldsymbol{\Sigma }\left( \boldsymbol{\theta }\right) \right \vert ^{-\tau
		/2}\exp \left( -\frac{\tau }{2}\left( \boldsymbol{y}_{i}-\boldsymbol{\mu }%
	\left( \boldsymbol{\theta }\right) \right) ^{T}\boldsymbol{\Sigma }\left( 
	\boldsymbol{\theta }\right) ^{-1}\left( \boldsymbol{y}_{i}-\boldsymbol{\mu }%
	\left( \boldsymbol{\theta }\right) \right) \right) \right. \\
	&&\left[ 2\left( \frac{\partial \boldsymbol{\mu }\left( \boldsymbol{\theta }%
		\right) }{\partial \boldsymbol{\theta }}\right) ^{T}\boldsymbol{\Sigma }%
	\left( \boldsymbol{\theta }\right) ^{-1}\left( \boldsymbol{y}_{i}-%
	\boldsymbol{\mu }\left( \boldsymbol{\theta }\right) \right) \right. \\
	&&\left. \left. +\left( \boldsymbol{y}_{i}-\boldsymbol{\mu }\left( 
	\boldsymbol{\theta }\right) \right) ^{T}\left( \boldsymbol{\Sigma }\left( 
	\boldsymbol{\theta }\right) ^{-1}\dfrac{\partial \boldsymbol{\Sigma }\left( 
		\boldsymbol{\theta }\right) }{\partial \boldsymbol{\theta }}\boldsymbol{%
		\Sigma }\left( \boldsymbol{\theta }\right) ^{-1}\right) \left( \boldsymbol{y}%
	_{i}-\boldsymbol{\mu }\left( \boldsymbol{\theta }\right) \right) \right]
	\right \} \\
	&:=&\frac{1}{n}\tsum \limits_{i=1}^{n}\boldsymbol{\Psi }_{\boldsymbol{\tau }}(%
	\boldsymbol{y}_{i};\boldsymbol{\theta }),
\end{eqnarray*}%
with 
\begin{eqnarray}
	\boldsymbol{\Psi }_{\boldsymbol{\tau }}(\boldsymbol{y}_{i};\boldsymbol{%
		\theta )} &=&a\text{ }\frac{\tau }{2}\left \vert \boldsymbol{\Sigma }\left( 
	\boldsymbol{\theta }\right) \right \vert ^{-\tau /2}\left \{ \left[
	-trace\left( \boldsymbol{\Sigma }\left( \boldsymbol{\theta }\right) ^{-1}%
	\dfrac{\partial \boldsymbol{\Sigma }\left( \boldsymbol{\theta }\right) }{%
		\partial \boldsymbol{\theta }}\right) \right. \right.  \label{116b} \\
	&&\left. +\left( 2\left( \frac{\partial \boldsymbol{\mu }\left( \boldsymbol{%
			\theta }\right) }{\partial \boldsymbol{\theta }}\right) ^{T}\boldsymbol{%
		\Sigma }\left( \boldsymbol{\theta }\right) ^{-1}\left( \boldsymbol{y}_{i}-%
	\boldsymbol{\mu }\left( \boldsymbol{\theta }\right) \right) \right. \right. 
	\notag \\
	&&\left. \left. \left. +\left( \boldsymbol{y}_{i}-\boldsymbol{\mu }\left( 
	\boldsymbol{\theta }\right) \right) ^{T}\left( \boldsymbol{\Sigma }\left( 
	\boldsymbol{\theta }\right) ^{-1}\dfrac{\partial \boldsymbol{\Sigma }\left( 
		\boldsymbol{\theta }\right) }{\partial \boldsymbol{\theta }}\boldsymbol{%
		\Sigma }\left( \boldsymbol{\theta }\right) ^{-1}\right) \boldsymbol{y}_{i}-%
	\boldsymbol{\mu }\left( \boldsymbol{\theta }\right) \right. \right. \right) .
	\notag \\
	&&\left. \left. \left. \exp \left( -\frac{\tau }{2}\left( \boldsymbol{y}_{i}-%
	\boldsymbol{\mu }\left( \boldsymbol{\theta }\right) \right) ^{T}\boldsymbol{%
		\Sigma }\left( \boldsymbol{\theta }\right) ^{-1}\left( \boldsymbol{y}_{i}-%
	\boldsymbol{\mu }\left( \boldsymbol{\theta }\right) \right) \right) \right.
	\right. \right]  \notag \\
	&&\left. \left. +b\text{ }trace\left( \boldsymbol{\Sigma }\left( \boldsymbol{%
		\theta }\right) ^{-1}\dfrac{\partial \boldsymbol{\Sigma }\left( \boldsymbol{%
			\theta }\right) }{\partial \boldsymbol{\theta }}\right) \right. \right \} . 
	\notag
\end{eqnarray}

Therefore the estimating equations of the MDPDGE for a fixed parameter $\tau$ are given by
\begin{equation}
	\tsum \limits_{i=1}^{n}\boldsymbol{\Psi }_{\boldsymbol{\tau }}(\boldsymbol{y}%
	_{i};\boldsymbol{\theta })=\boldsymbol{0}_{d}.  \label{116a}
\end{equation}%

The previous estimating equations characterizes the the MDPDGE as an M-estimator and so it asymptotic distribution could have been also derived from the general theory of M-estimators. In particular, the MDPDGE, $\widehat{\boldsymbol{\theta }}_{G}^{\tau},$ satisfies for any $\tau \geq 0$
\begin{equation}\label{eq:asymptM}
	\sqrt{n}\left( \widehat{\boldsymbol{\theta }}_{G}^{\tau }-\boldsymbol{\theta 
	}\right) \underset{n\longrightarrow \infty }{\overset{\mathcal{L}}{%
			\longrightarrow }}\mathcal{N}\left( \boldsymbol{0}_{d},\boldsymbol{S}^{-1}%
	\boldsymbol{MS}^{-1}\right)
\end{equation}%
with
\begin{equation}\label{eq:SM}
	\boldsymbol{S}=-E\left[ \frac{\partial ^{2}H_{n}^{\tau }\left( \boldsymbol{%
			\theta }\right) }{\partial \boldsymbol{\theta \partial \theta }^{T}}\right] 
	\text{ and }\boldsymbol{M}=Cov\left[ \sqrt{n}\frac{\partial }{\partial 
		\boldsymbol{\theta }}H_{n}^{\tau }(\boldsymbol{\theta })\right] .
\end{equation}%
Based on Propositions \ref{Proposition1} and \ref{Proposition2} we can express the previous matrices as
\begin{equation*}
	\boldsymbol{S=}\left( \tau +1\right) \boldsymbol{J}_{\boldsymbol{\tau }%
	}\left( \boldsymbol{\theta }\right) \text{ and }\boldsymbol{M=}\left( \tau
	+1\right) ^{2}\boldsymbol{K}_{\boldsymbol{\tau }}\left( \boldsymbol{\theta }%
	\right)
\end{equation*}%
and we get back the expressions stated in (\ref{103a}). The asymptotic convergence in (\ref{eq:asymptM}) offers an alternative proof of the 
 asymptotic distribution of MDPDGE stated in Castilla and Zografos (2019), in terms of the transformed matrices $\boldsymbol{S}$ and $\boldsymbol{M}$ in Equation (\ref{eq:SM}).

\section{Influence function for the RMDPDGE}

To analyze the robustness of an estimator, Hampel et al. (1986) introduced
the concept of Influence Function (IF). Since then, the IF have been widely
used in the statistical literature to measure robustness in different
statistical contexts. Intuitively, the IF describes the effect of an
infinitesimal contamination of the model on the estimation. Robust estimators should be less affected by contamination and thus IFs associated to locally robust (B-robust) estimators should be bounded.

The IF of an estimator, $\widetilde{\boldsymbol{\theta}}_G^\tau,$ is defined in terms of its statistical functional $\widetilde{T}_{\tau }$ satisfying  $\widetilde{T}_{\tau }(g)= \widetilde{\boldsymbol{\theta }}_{G}^{\tau },$ where $g$ is the true density function underlying the data.
 Given the density function $g$ we define it contaminated version at the point perturbation $\boldsymbol{y}_0$ as,
\begin{equation} \label{eq:contdensity}
	g = (1-\varepsilon)g+\varepsilon\Delta_{\boldsymbol{y}_0},
\end{equation}
where $\varepsilon$ is  fraction of contamination and $\Delta_{\boldsymbol{y}_0}$ denotes the indicator function at $\boldsymbol{y}_0.$
Then, the IF of $\widetilde{\theta}_G^\tau$ is defined as the derivative of the functional at $\varepsilon = 0$
$$IF (\boldsymbol{y}_0, \widetilde{T}_\tau) = \frac{\partial \widetilde{T}_\tau (g_\varepsilon)}{\varepsilon}|_{\varepsilon = 0} $$
The above derivative quantifies the rate of change of the sample estimator as the fraction of contamination changes, i.e. how much the estimate is affected by contamination.

 Let us now obtain the IF of RMDPDE. We consider the contaminated model 
 $$ g_{\varepsilon }(\boldsymbol{y})=(1-\varepsilon )f_{\boldsymbol{\theta }}(%
\boldsymbol{y})+\varepsilon \Delta _{\boldsymbol{y}_0},$$ with $\Delta _{%
\boldsymbol{y}_0}$ the indicator function degenerated at the mass point $\boldsymbol{y}_0,$ and  $f_{\boldsymbol{\theta }}$ is the assumed probability density function of a normal population.
The MDPDGE for the contaminated model is then given by $\widetilde{\boldsymbol{\theta }}_{G,\varepsilon }^{\tau }=\widetilde{T}%
_{\tau }(g_{\varepsilon }).$

 By definition $\widetilde{\boldsymbol{\theta }}_{G,\varepsilon }^{\tau }$ is the minimizer of the loss function $H_{n}^{\tau }(\boldsymbol{\theta })$ in (\ref{eq:Hn}), subject to the constraints $\boldsymbol{g}(\widetilde{\boldsymbol{\theta }}_{G,\varepsilon }^{\tau })\boldsymbol{=0.}$ 
 Using the characterization of the MDPDGE as an M-estimator we have that the influence function of the MDPDGE is given by
\begin{equation}
IF(\boldsymbol{y},\widetilde{T}_{\tau },\boldsymbol{\theta })=\boldsymbol{J}%
_{\tau }(\boldsymbol{\theta })^{-1}\boldsymbol{\Psi }_{\boldsymbol{\tau }}(%
\boldsymbol{y};\boldsymbol{\theta )},  \label{IF1}
\end{equation}%
where $\boldsymbol{J}_{\tau }(\boldsymbol{\theta })$ was defined in (\ref%
{103aa}) and $\boldsymbol{\Psi }_{\boldsymbol{\tau }}(\boldsymbol{y};%
\boldsymbol{\theta )}$ in (\ref{116b}). The influence function of the
RMDPDGE will be obtained with the additional condition $\boldsymbol{g}(%
\widetilde{\boldsymbol{\theta }}_{G,\varepsilon }^{\tau })\boldsymbol{=0.}$
Differentiating this last equation gives, at $\varepsilon =0,$%
\begin{equation}
\mathbf{G}\left( \boldsymbol{\theta }\right) ^{T}IF(\boldsymbol{y},%
\widetilde{T}_{\tau },\boldsymbol{\theta })=\boldsymbol{0}.  \label{IF2}
\end{equation}%
Based on (\ref{IF1}) and (\ref{IF2}) we have

\begin{equation*}
\left( 
\begin{array}{c}
\boldsymbol{J}_{\tau }(\boldsymbol{\theta }) \\ 
\mathbf{G}\left( \boldsymbol{\theta }\right) ^{T}%
\end{array}%
\right) IF(\boldsymbol{y},\widetilde{T}_{\tau },\boldsymbol{\theta })=\left( 
\begin{array}{c}
\boldsymbol{\Psi }_{\boldsymbol{\tau }}(\boldsymbol{y};\boldsymbol{\theta )}
\\ 
\mathbf{0}%
\end{array}%
\right) .
\end{equation*}%
Therefore, 
\begin{equation*}
\left( \boldsymbol{J}_{\tau }(\boldsymbol{\theta })^{T},\mathbf{G}\left( 
\boldsymbol{\theta }\right) \right) \left( 
\begin{array}{c}
\boldsymbol{J}_{\tau }(\boldsymbol{\theta }) \\ 
\mathbf{G}\left( \boldsymbol{\theta }\right) ^{T}%
\end{array}%
\right) IF(\boldsymbol{y},\widetilde{T}_{\tau },\boldsymbol{\theta })=%
\boldsymbol{J}_{\tau }(\boldsymbol{\theta })^{T}\boldsymbol{\Psi }_{%
\boldsymbol{\tau }}(\boldsymbol{y};\boldsymbol{\theta )}
\end{equation*}%
and the influence function of the RMDPDGE, $\widetilde{\boldsymbol{\theta }}%
_{G}^{\tau }\boldsymbol{,}$ is given by 
\begin{equation}
\left( \boldsymbol{J}_{\tau }(\boldsymbol{\theta })^{T}\boldsymbol{J}_{\tau
}(\boldsymbol{\theta }))+\mathbf{G}\left( \boldsymbol{\theta }\right) 
\mathbf{G}\left( \boldsymbol{\theta }\right) ^{T}\right) ^{-1}\boldsymbol{J}%
_{\tau }(\boldsymbol{\theta })^{T}\boldsymbol{\Psi }_{\boldsymbol{\tau }}(%
\boldsymbol{y};\boldsymbol{\theta )}.  \label{eq:IF}
\end{equation}

We can observe that the influence function of $\widetilde{\boldsymbol{\theta 
}}_{G}^{\tau },$ obtained in (\ref{eq:IF}), will be bounded if the influence
function of the MDPDGE, $\widehat{\boldsymbol{\theta }}_{G}^{\tau },$ given
in (\ref{IF1}) is bounded. In general it is not easy to see if it is bounded
or not but in particular situations it is not difficult. On the other hand
if there are not restrictions, $\mathbf{G}\left( \boldsymbol{\theta }\right)
=\boldsymbol{0},$ and therefore (\ref{eq:IF}) coincides with (\ref{IF1}).

In Section 4.1 we shall present the expression of $J_{\tau }(\theta )$ and $%
\psi _{\tau }(y,\theta )$ for the exponential and Poisson models. Based on
that results we present in Figure 1 the influence function of the MDPDGE, $%
\widehat{\boldsymbol{\theta }}_{G}^{\tau }$, for $\theta =4$ and $\tau =0,$ $%
0,2$ and $0.8$ for the exponential model. We can see that for $\tau =0,$ \
the influence function is not bounded and for $\tau =0,2$ and $0.8$ is
bounded. This fact points out the robustness of the MDPDGE, $\widehat{%
\boldsymbol{\theta }}_{G}^{\tau },$ for $\tau >0.$

\begin{eqnarray*}
&&\FRAME{itbpFX}{4.4996in}{3in}{0in}{}{}{Plot}{\special{language "Scientific
Word";type "MAPLEPLOT";width 4.4996in;height 3in;depth 0in;display
"USEDEF";plot_snapshots TRUE;mustRecompute FALSE;lastEngine "MuPAD";xmin
"0";xmax "15";xviewmin "0";xviewmax "15";yviewmin "-2";yviewmax
"12";viewset"XY";rangeset"X";plottype 4;labeloverrides 3;x-label "y";y-label
"IF(y)";axesFont "Times New Roman,12,0000000000,useDefault,normal";numpoints
100;plotstyle "patch";axesstyle "normal";axestips FALSE;xis
\TEXUX{x};var1name \TEXUX{$x$};function \TEXUX{$v$};linecolor
"green";linestyle 1;discont FALSE;pointstyle "point";linethickness
2;lineAttributes "Solid";var1range "0,15";num-x-gridlines 100;curveColor
"[flat::RGB:0x00008000]";curveStyle "Line";discont FALSE;function
\TEXUX{$m$};linecolor "blue";linestyle 1;pointstyle "point";linethickness
1;lineAttributes "Solid";var1range "0,15";num-x-gridlines 100;curveColor
"[flat::RGB:0x000000ff]";curveStyle "Line";function \TEXUX{$n$};linecolor
"black";linestyle 1;pointstyle "point";linethickness 1;lineAttributes
"Solid";var1range "0,15";num-x-gridlines 100;curveColor
"[flat::RGB:0000000000]";curveStyle "Line";VCamFile
'RPGRWB02.xvz';valid_file "T";tempfilename
'RPGRWB01.wmf';tempfile-properties "XPR";}} \\
&&%
\begin{array}{c}
\text{Figure 1. {\small Influence function for} }{\small \tau =0}\text{ 
{\small (read)}, }{\small 0.2}\text{ {\small (black) and} }{\small 0.8}\text{
{\small (Green)}} \\ 
\text{{\small (Exponential model)}}%
\end{array}%
\end{eqnarray*}

\section{Rao-type tests based on RMDPDGE}

Recently many robust test statistics based on minimum distance estimators have been introduced in the statistical literature for testing under different statistical models. Among them, density power divergence and Rényi's pseudodistance based test statistics have shown very competitive performance with respect to classical tests in many different problems.
Distance-based test statistics are essentially of two types: Wald-type tests and Rao-type tests. Some applications of these tests are the following: Basu et al. (2013, 2016, 2017, 2018a, 2018b, 2021, 2022a, 2022b), Castilla et al (2020, 2022a, 2022b), Ghosh et al (2016, 2021), Jaenada et al (2022a, 2022b), Martín (2021), Men\'{e}ndez et al
(1995) and all references therein.
In this section we introduce  the Rao-type tests based on RMDPDGE and we study their asymptotic properties, proving the consistency of the tests.

We analyze here simple null hypothesis of the form
\begin{equation}
	H_{0}:\boldsymbol{\theta }=\boldsymbol{\theta }_{0}\text{ versus }H_{1}:%
	\boldsymbol{\theta }\neq \boldsymbol{\theta }_{0}.  \label{117}
\end{equation}%
 %because forcomposite null hypothesis it is necessary a separated paper.
 
 \begin{definition}
 	Let $\boldsymbol{Y}_{1},...,\boldsymbol{Y}_{n}$ be independent and
 	identically distributed observations from a $m$-dimensional random vector $%
 	\boldsymbol{Y}$ with $E_{\boldsymbol{\theta }}\left[ \boldsymbol{Y}\right] =%
 	\boldsymbol{\mu }\left( \boldsymbol{\theta }\right) $ and $Cov_{\boldsymbol{%
 			\theta }}\left[ \boldsymbol{Y}\right] =\boldsymbol{\Sigma }\left( 
 	\boldsymbol{\theta }\right) $, $\boldsymbol{\theta }\in \Theta \subset 
 	\mathbb{R}^{d},$ and consider the testing problem defined in (\ref{117}).
 	The Rao-type test statistic based on RMDPDGE, is defined by
 	\begin{equation}
 		R_{\boldsymbol{\tau }}\left( \boldsymbol{\theta }_{0}\right) =\frac{1}{n}%
 		\boldsymbol{U}_{n}^{\boldsymbol{\tau }}(\boldsymbol{\theta }_{0}\boldsymbol{)%
 		}^{T}\boldsymbol{K}_{\boldsymbol{\tau }}\left( \boldsymbol{\theta }%
 		_{0}\right) ^{-1}\boldsymbol{U}_{n}^{\boldsymbol{\tau }}(\boldsymbol{\theta }%
 		_{0}),  \label{117AAA}
 	\end{equation}
 	where 
 	\begin{equation*}
 		\boldsymbol{U}_{n}^{\boldsymbol{\tau }}(\boldsymbol{\theta )=}\left( \frac{1%
 		}{\tau +1}\tsum \limits_{i=1}^{n}\Psi _{\boldsymbol{\tau }}^{1}(\boldsymbol{y%
 		}_{i};\boldsymbol{\theta }),...,\frac{1}{\tau +1}\tsum
 		\nolimits_{i=1}^{n}\Psi _{\boldsymbol{\tau }}^{d}(\boldsymbol{y}_{i};%
 		\boldsymbol{\theta })\right) ^{T},
 	\end{equation*}
 	is the score function defining the estimating equations of the MDPDGE and $\boldsymbol{\Psi }_{\boldsymbol{\tau }}(\boldsymbol{y}_{i};\boldsymbol{%
 		\theta )=}\left( \Psi _{\boldsymbol{\tau }}^{1}(\boldsymbol{y}_{i};%
 	\boldsymbol{\theta }),....,\Psi _{\boldsymbol{\tau }}^{d}(\boldsymbol{y}_{i};%
 	\boldsymbol{\theta })\right) .$
 \end{definition}
  
Before deriving the asymptotic distribution of the Rao-type test based on the MDPDGE, it is interesting to note that we rewrite
\begin{equation*}
\frac{1}{\sqrt{n}}\tsum \limits_{i=1}^{n}\frac{1}{\tau +1}\boldsymbol{\Psi }%
_{\boldsymbol{\tau }}(\boldsymbol{y}_{i};\boldsymbol{\theta )=}\sqrt{n}\frac{%
1}{\tau +1}\frac{\partial }{\partial \boldsymbol{\theta }}H_{n}^{\tau }(%
\boldsymbol{\theta }),
\end{equation*}%
and hence by Proposition \ref{Proposition1} we can stablish the asymptotic distribution of the score function $\boldsymbol{U}_n^\tau(\boldsymbol{\theta})$
\begin{equation*}
\frac{1}{\sqrt{n}}\tsum \limits_{i=1}^{n}\frac{1}{\tau +1}\boldsymbol{\Psi }%
_{\boldsymbol{\tau }}(\boldsymbol{y}_{i};\boldsymbol{\theta )}\underset{%
n\rightarrow \infty }{\overset{L}{\rightarrow }}\mathcal{N}\left( 
\boldsymbol{0}_{p},\boldsymbol{K}_{\boldsymbol{\tau }}\left( \boldsymbol{%
\theta }\right) \right) .
\end{equation*}

%In the following we shall call $R_{\boldsymbol{\tau }}\left( \boldsymbol{%\theta }_{0}\right) $, \ "Rao-type tests based on RMDPDGE".
The next result establishes the asymptotic behaviour of the proposed Rao-type test statistic.
\begin{theorem} \label{thm:asymptrao}
Let $\boldsymbol{Y}_{1},...,$ $\boldsymbol{Y}_{n}$ be independent and
identically distributed observations from a $m$- dimensional random vector $%
\boldsymbol{Y}$ with $E_{\boldsymbol{\theta }}\left[ \boldsymbol{Y}\right] =%
\boldsymbol{\mu }\left( \boldsymbol{\theta }\right) $ and $Cov_{\boldsymbol{%
\theta }}\left[ \boldsymbol{Y}\right] =\boldsymbol{\Sigma }\left( 
\boldsymbol{\theta }\right) $, $\boldsymbol{\theta }\in \Theta \subset 
\mathbb{R}^{d}.$ Under the null hypothesis given in (\ref{117}) it holds
\begin{equation*}
R_{\boldsymbol{\tau }}\left( \boldsymbol{\theta }_{0}\right) \underset{%
n\rightarrow \infty }{\overset{L}{\rightarrow }}\chi _{d}^{2}.
\end{equation*}
\end{theorem}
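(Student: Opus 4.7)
The plan is to reduce the Rao-type statistic to a quadratic form in an asymptotically normal vector whose inverse covariance is exactly the weighting matrix, and then invoke the standard chi-squared quadratic form result. The first step is to rewrite the score vector in the form already analysed earlier in the paper. Precisely, I would note that
\begin{equation*}
\frac{1}{\sqrt{n}}\boldsymbol{U}_n^{\boldsymbol{\tau}}(\boldsymbol{\theta}_0) = \frac{1}{\sqrt{n}}\sum_{i=1}^n \frac{1}{\tau+1}\boldsymbol{\Psi}_{\boldsymbol{\tau}}(\boldsymbol{y}_i;\boldsymbol{\theta}_0) = \sqrt{n}\,\frac{1}{\tau+1}\frac{\partial}{\partial \boldsymbol{\theta}}H_n^{\tau}(\boldsymbol{\theta}_0),
\end{equation*}
which is the random quantity whose asymptotic law was established in Proposition \ref{Proposition1}. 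Under $H_0$ the true parameter is $\boldsymbol{\theta}_0$, so Proposition \ref{Proposition1} applies at $\boldsymbol{\theta}_0$ and yields
\begin{equation*}
\frac{1}{\sqrt{n}}\boldsymbol{U}_n^{\boldsymbol{\tau}}(\boldsymbol{\theta}_0) \underset{n\to\infty}{\overset{\mathcal{L}}{\longrightarrow}} \mathcal{N}\bigl(\boldsymbol{0}_d,\boldsymbol{K}_{\boldsymbol{\tau}}(\boldsymbol{\theta}_0)\bigr).
\end{equation*}

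Second, I would observe that the statistic can be rewritten as
\begin{equation*}
R_{\boldsymbol{\tau}}(\boldsymbol{\theta}_0) = \left(\frac{1}{\sqrt{n}}\boldsymbol{U}_n^{\boldsymbol{\tau}}(\boldsymbol{\theta}_0)\right)^{T} \boldsymbol{K}_{\boldsymbol{\tau}}(\boldsymbol{\theta}_0)^{-1}\left(\frac{1}{\sqrt{n}}\boldsymbol{U}_n^{\boldsymbol{\tau}}(\boldsymbol{\theta}_0)\right),
\end{equation*}
i.e. a continuous quadratic form in the normalized score vector weighted by the inverse of its asymptotic covariance. Applying the continuous mapping theorem, the limit in distribution of $R_{\boldsymbol{\tau}}(\boldsymbol{\theta}_0)$ coincides with the distribution of $\boldsymbol{Z}^{T}\boldsymbol{K}_{\boldsymbol{\tau}}(\boldsymbol{\theta}_0)^{-1}\boldsymbol{Z}$ where $\boldsymbol{Z}\sim \mathcal{N}(\boldsymbol{0}_d,\boldsymbol{K}_{\boldsymbol{\tau}}(\boldsymbol{\theta}_0))$. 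The standard fact that $\boldsymbol{Z}^{T}\boldsymbol{\Sigma}^{-1}\boldsymbol{Z}\sim \chi_d^2$ for any centered Gaussian with positive definite covariance $\boldsymbol{\Sigma}$ (proved by diagonalising via $\boldsymbol{\Sigma}^{-1/2}\boldsymbol{Z}\sim \mathcal{N}(\boldsymbol{0}_d,\boldsymbol{I}_d)$) then yields the claimed $\chi_d^2$ limit.

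There is essentially no calculational difficulty here because Proposition \ref{Proposition1} has already done the hard analytic work of identifying the asymptotic covariance of the score. The only point requiring a small comment is the invertibility of $\boldsymbol{K}_{\boldsymbol{\tau}}(\boldsymbol{\theta}_0)$; this is implicit in the regularity hypotheses (the matrix appears inverted also in the asymptotic variance in (\ref{103a})), and under the standard assumption that $\partial \boldsymbol{\mu}(\boldsymbol{\theta})/\partial \boldsymbol{\theta}$ has full rank together with positive definiteness of $\boldsymbol{\Sigma}(\boldsymbol{\theta}_0)$ it follows from the explicit expression (\ref{103aaa}). Hence the main obstacle, such as it is, is bookkeeping: aligning the Rao-type quadratic form with the Proposition \ref{Proposition1} normalisation. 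Once this is done the chi-square conclusion is immediate from continuous mapping plus the classical quadratic-form lemma.
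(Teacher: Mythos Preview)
Your proof is correct and follows exactly the same approach as the paper: rewrite $\frac{1}{\sqrt{n}}\boldsymbol{U}_n^{\boldsymbol{\tau}}(\boldsymbol{\theta}_0)$ as $\sqrt{n}\,\frac{1}{\tau+1}\frac{\partial}{\partial\boldsymbol{\theta}}H_n^\tau(\boldsymbol{\theta}_0)$, invoke Proposition~\ref{Proposition1} for asymptotic normality with covariance $\boldsymbol{K}_{\boldsymbol{\tau}}(\boldsymbol{\theta}_0)$, and conclude via the standard quadratic-form-to-chi-square argument. In fact you are more explicit than the paper, which merely says ``applying a suitable transformation, the result follows''; your spelling out of the continuous mapping step and the $\boldsymbol{\Sigma}^{-1/2}\boldsymbol{Z}$ diagonalisation is a welcome clarification.
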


\begin{proof}
As remarked before, the score function is asymptotically normal,
\begin{equation*}
\frac{1}{\sqrt{n}}\boldsymbol{U}_{n}^{\boldsymbol{\tau }}(\boldsymbol{\theta
)=}\frac{1}{\sqrt{n}}\tsum \limits_{i=1}^{n}\frac{1}{\tau +1}\boldsymbol{%
\Psi }_{\boldsymbol{\tau }}(\boldsymbol{y}_{i};\boldsymbol{\theta )=}\sqrt{n}%
\frac{1}{\tau +1}\frac{\partial }{\partial \boldsymbol{\theta }}H_{n}^{\tau
}(\boldsymbol{\theta })\underset{n\longrightarrow \infty }{\overset{\mathcal{%
L}}{\longrightarrow }}\mathcal{N}\left( \boldsymbol{0},\boldsymbol{K}_{%
\boldsymbol{\tau }}\left( \boldsymbol{\theta }\right) \right) .
\end{equation*}%
Then, applying a suitable transformation, the result follows.
\end{proof}

\begin{remark}
Based on Theorem \ref{thm:asymptrao}, for large enough sample sizes, one can use
the $100(1-\alpha )$ percentile, $\chi _{d,\alpha}^{2},$ of
the chi-square with $d$ degrees of freedom satisfying, $$\Pr
\left( \chi _{d}^{2}>\chi _{d,\boldsymbol{\alpha }}^{2}\right) =\alpha, $$ to
define the reject region of the test with null hypothesis in (\ref{117})
% "Reject $H_{0}$, with a significant level $\alpha,$ if 
$$ RC= \{R_{\boldsymbol{\tau }}\left( \boldsymbol{\theta }_{0}\right) >\chi_{d,\alpha}^{2}\}.$$
\end{remark}

For illustrative purposes, we present here the application of the proposed method in elliptical distributions.
 
\begin{example}
(Elliptical distributions). The $m$-dimensional random vector $\boldsymbol{Y}
$ follows an elliptical distribution if it characteristic function has the form%
\begin{equation*}
\varphi _{\boldsymbol{Y}}\left( \boldsymbol{t}\right) =\exp \left( i%
\boldsymbol{t}^{2}\boldsymbol{\mu }\right) \psi \left( \frac{1}{2}%
\boldsymbol{t}^{2}\boldsymbol{\Sigma t}\right)
\end{equation*}%
where $\boldsymbol{\mu }$ is a $m$-dimensional vector, $\boldsymbol{%
\Sigma }$ is a positive definite matrix and $\psi (t)$ denotes the so-called
characteristic generator function. The function $\psi $ may depend on the
dimension of random vector $\boldsymbol{Y}$. In general, it does not hold
that $\boldsymbol{Y}$ has a joint density function, $f_{\boldsymbol{Y}}(%
\boldsymbol{y}),$ but if this density exists, it is given by 
\begin{equation*}
f_{\boldsymbol{Y}}(\boldsymbol{y})=c_{m}\left \vert \boldsymbol{\Sigma }%
\right \vert ^{-\frac{1}{2}}g_{m}\left( \frac{1}{2}\left( \boldsymbol{y}-%
\boldsymbol{\mu }\right) ^{T}\boldsymbol{\Sigma }^{-1}\left( \boldsymbol{y}-%
\boldsymbol{\mu }\right) \right)
\end{equation*}%
for some density generator function $g_{m}$ which could depend on the
dimension of the random vector. The elliptical distribution family is in the following denoted by $%
E_{m}\left( \boldsymbol{\mu ,\Sigma ,}g_{m}\right) .$ Moreover, if  the
density exists, the parameter $c_{m}$ is given explicitly by 
\begin{equation*}
c_{m}=\left( 2\pi \right) ^{-\frac{m}{2}}\Gamma \left( \frac{m}{2}\right)
\left( \int x^{\frac{m}{2}-1}g_{m}\left( x\right) dx\right) ^{-1}.
\end{equation*}%
For more details about the elliptical family $E_{m}\left( \boldsymbol{\mu ,\Sigma ,}%
g_{m}\right) $ see  Fang et al (1987), Gupta and Varga (1993),
Cambanis et al (1981), Fang and Zhang (1990) and references therein. 
In Fang et al (1987), for instance, it can be seen that the mean vector and variance covariance matrix can obtained as
\begin{equation*}
E\left[ \boldsymbol{Y}\right] =\boldsymbol{\mu }\text{ and }Cov\left[ 
\boldsymbol{Y}\right] =c_{\boldsymbol{Y}}\boldsymbol{\Sigma }
\end{equation*}%
where $c_{\boldsymbol{Y}}=-2\psi ^{\prime }(0).$

For the elliptical model, the parameter to be estimated is $\boldsymbol{\theta }=\left( 
\boldsymbol{\mu }^{T},\boldsymbol{\Sigma }\right) $ whose dimension is $s=m+%
\frac{m\left( m+1\right) }{2}.$ In the following we denote $%
\boldsymbol{\mu (\theta )}$ instead of $\boldsymbol{\mu }$ and $\boldsymbol{%
\Sigma }\left( \boldsymbol{\theta }\right) $ instead of $\boldsymbol{\Sigma }
$, so as  to be consistent with the paper notation.

Let us consider the testing problem 
\begin{equation}
H_{0}:\left( \boldsymbol{\mu }(\boldsymbol{\theta }),\boldsymbol{\Sigma }%
\left( \boldsymbol{\theta }\right) \right) =\left( \boldsymbol{\mu }_{0},%
\boldsymbol{\Sigma }_{0}\right) \text{ versus }H_{1}:\left( \boldsymbol{\mu }%
(\boldsymbol{\theta }),\boldsymbol{\Sigma }\left( \boldsymbol{\theta }%
\right) \right) \neq \left( \boldsymbol{\mu }_{0},\boldsymbol{\Sigma }%
_{0}\right)  \label{117b}
\end{equation}%
where $\boldsymbol{\mu }_{0}$ and $\boldsymbol{\Sigma }_{0}$ are 
known.  The Rao-type test statistic based on the MDPDGE for the elliptical model is given by
%we shall reject the null hypothesis given in 
(\ref{117b}) if 
\begin{equation*}
R_{\boldsymbol{\tau }}\left( \boldsymbol{\mu }_{0},\boldsymbol{\Sigma }%
_{0}\right) =\frac{1}{n}\boldsymbol{U}_{n}^{\boldsymbol{\tau }}(\boldsymbol{%
\mu }_{0},\boldsymbol{\Sigma }_{0})^{T}\boldsymbol{K}_{\boldsymbol{\tau }%
}\left( \boldsymbol{\mu }_{0},\boldsymbol{\Sigma }_{0}\right) ^{-1}%
\boldsymbol{U}_{n}^{\boldsymbol{\tau }}(\boldsymbol{\mu }_{0},\boldsymbol{%
\Sigma }_{0})
\end{equation*}%
where 
\begin{equation*}
\boldsymbol{U}_{n}^{\boldsymbol{\tau }}(\boldsymbol{\mu }_{0},\boldsymbol{%
\Sigma }_{0}\boldsymbol{)=}\tsum \limits_{i=1}^{n}\frac{1}{\tau +1}%
\boldsymbol{\Psi }_{\boldsymbol{\tau }}(\boldsymbol{y}_{i};\boldsymbol{\mu }%
_{0},\boldsymbol{\Sigma }_{0})
\end{equation*}
with $\boldsymbol{\Psi }_{\boldsymbol{\tau }}(\boldsymbol{y}_{i};\boldsymbol{%
\mu }_{0},\boldsymbol{\Sigma }_{0}\boldsymbol{)}$ are as defined in  (\ref{116b})
and (\ref{103aaa}), respectively, but replacing $\boldsymbol{\Sigma }\left( 
\boldsymbol{\theta }\right) $ by $c_{\boldsymbol{Y}}\boldsymbol{\Sigma }$
and $\boldsymbol{\mu }\left( \boldsymbol{\theta }\right) $ by $\boldsymbol{%
\mu }$.
Then, the null hypothesis in (\ref{117b}) should be rejected if
\begin{equation*}
	R_{\boldsymbol{\tau }}\left( \boldsymbol{\mu }_{0},\boldsymbol{\Sigma }%
	_{0}\right) >\chi _{m+\frac{m(m+1)}{2},\alpha}^{2},
\end{equation*}
with $\chi _{m+\frac{m(m+1)}{2},\alpha}^{2}$ is the $1-\alpha$ upper quantile  of a chi-square with $m+\frac{m(m+1)}{2}$ degrees of freedom.
\end{example}

We finally prove the consistency of the Rao-type test based on
RMDPDGE. To simplify the statement of the next result, we first define the vector
\begin{eqnarray}
\boldsymbol{Y}_{\tau }(\boldsymbol{\theta }) &=&a\text{ }\frac{\tau }{2}%
\left \vert \boldsymbol{\Sigma }\left( \boldsymbol{\theta }\right) \right
\vert ^{-\tau /2}\left \{ -trace\left( \boldsymbol{\Sigma }\left( 
\boldsymbol{\theta }\right) ^{-1}\dfrac{\partial \boldsymbol{\Sigma }\left( 
\boldsymbol{\theta }\right) }{\partial \boldsymbol{\theta }}\right) \right.
\label{117a} \\
&&\left. \exp \left( -\frac{\tau }{2}\left( \boldsymbol{Y}-\boldsymbol{\mu }%
\left( \boldsymbol{\theta }\right) \right) ^{T}\boldsymbol{\Sigma }\left( 
\boldsymbol{\theta }\right) ^{-1}\left( \boldsymbol{Y}-\boldsymbol{\mu }%
\left( \boldsymbol{\theta }\right) \right) \right) +b\text{ }trace\left( 
\boldsymbol{\Sigma }\left( \boldsymbol{\theta }\right) ^{-1}\dfrac{\partial 
\boldsymbol{\Sigma }\left( \boldsymbol{\theta }\right) }{\partial 
\boldsymbol{\theta }}\right) \right.  \notag \\
&&\left. +\text{ }\exp \left( -\frac{\tau }{2}\left( \boldsymbol{Y}-%
\boldsymbol{\mu }\left( \boldsymbol{\theta }\right) \right) ^{T}\boldsymbol{%
\Sigma }\left( \boldsymbol{\theta }\right) ^{-1}\left( \boldsymbol{Y}-%
\boldsymbol{\mu }\left( \boldsymbol{\theta }\right) \right) \right) \right. 
\notag \\
&&\left[ 2\left( \frac{\partial \boldsymbol{\mu }\left( \boldsymbol{\theta }%
\right) }{\partial \boldsymbol{\theta }}\right) ^{T}\boldsymbol{\Sigma }%
\left( \boldsymbol{\theta }\right) ^{-1}\left( \boldsymbol{Y}-\boldsymbol{%
\mu }\left( \boldsymbol{\theta }\right) \right) \right.  \notag \\
&&\left. \left. +\left( \boldsymbol{Y}-\boldsymbol{\mu }\left( \boldsymbol{%
\theta }\right) \right) ^{T}\left( \boldsymbol{\Sigma }\left( \boldsymbol{%
\theta }\right) ^{-1}\dfrac{\partial \boldsymbol{\Sigma }\left( \boldsymbol{%
\theta }\right) }{\partial \boldsymbol{\theta }}\boldsymbol{\Sigma }\left( 
\boldsymbol{\theta }\right) ^{-1}\right) \left( \boldsymbol{Y}-\boldsymbol{%
\mu }\left( \boldsymbol{\theta }\right) \right) \right] \right \} ,  \notag
\end{eqnarray}%
where $\ a$ and $b$ were defined in (\ref{103b}). We can observe that $\frac{%
\partial }{\partial \boldsymbol{\theta }}H_{n}(\boldsymbol{\theta })$ is the
sample mean of a random sample of size $n$ from the $m$-dimensional
population $\boldsymbol{Y}_{\tau }(\boldsymbol{\theta }).$

\begin{theorem}
Let $\boldsymbol{Y}_{1},...,$ $\boldsymbol{Y}_{n}$ be independent and
identically distributed observations from a $m$-dimensional random vector $%
\boldsymbol{Y}$ with $E_{\boldsymbol{\theta }}\left[ \boldsymbol{Y}\right] =%
\boldsymbol{\mu }\left( \boldsymbol{\theta }\right) $ and $Cov_{\boldsymbol{%
\theta }}\left[ \boldsymbol{Y}\right] =\boldsymbol{\Sigma }\left( 
\boldsymbol{\theta }\right) $, $\boldsymbol{\theta }\in \Theta \subset 
\mathbb{R}^{d}.$ 
Let $\boldsymbol{\theta }\in \boldsymbol{\Theta }$ with $%
\boldsymbol{\theta }\neq \boldsymbol{\theta }_{0},$ with $\boldsymbol{\theta 
}_{0}$ defined in (\ref{117}), and let us assume that $E_{\boldsymbol{\theta 
}}\left[ \boldsymbol{Y}_{\tau }(\boldsymbol{\theta }_{0}\boldsymbol{)}\right]
\neq \boldsymbol{0}_{d}$. Then, 
\begin{equation*}
\lim_{n\rightarrow \infty }P_{\boldsymbol{\theta }}\left( R_{\boldsymbol{%
\tau }}\left( \boldsymbol{\theta }_{0}\right) >\chi _{d,\alpha }^{2}\right)
=1.
\end{equation*}
\end{theorem}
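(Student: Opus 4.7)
The plan is to show that under the alternative $\boldsymbol{\theta} \neq \boldsymbol{\theta}_0$, the statistic $R_{\boldsymbol{\tau}}(\boldsymbol{\theta}_0)$ diverges to $+\infty$ in probability, so that the probability of exceeding the fixed critical value $\chi_{d,\alpha}^2$ tends to one. I would first rewrite
\begin{equation*}
R_{\boldsymbol{\tau}}(\boldsymbol{\theta}_0) = n \left( \tfrac{1}{n}\boldsymbol{U}_n^{\boldsymbol{\tau}}(\boldsymbol{\theta}_0)\right)^{T} \boldsymbol{K}_{\boldsymbol{\tau}}(\boldsymbol{\theta}_0)^{-1} \left( \tfrac{1}{n}\boldsymbol{U}_n^{\boldsymbol{\tau}}(\boldsymbol{\theta}_0)\right),
\end{equation*}
so the behaviour of $R_{\boldsymbol{\tau}}$ is governed by the asymptotics of the normalized score $\frac{1}{n}\boldsymbol{U}_n^{\boldsymbol{\tau}}(\boldsymbol{\theta}_0)$.

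Next, I would use the observation already made in the paper that $\frac{1}{n}\sum_{i=1}^n \boldsymbol{\Psi}_{\boldsymbol{\tau}}(\boldsymbol{y}_i;\boldsymbol{\theta}_0)$ is the sample mean of i.i.d.\ copies of the random vector $\boldsymbol{Y}_{\tau}(\boldsymbol{\theta}_0)$ defined in (\ref{117a}), with $\boldsymbol{Y}$ distributed according to the true parameter $\boldsymbol{\theta}$. By the weak law of large numbers,
\begin{equation*}
\tfrac{1}{n}\boldsymbol{U}_n^{\boldsymbol{\tau}}(\boldsymbol{\theta}_0) = \tfrac{1}{\tau+1} \cdot \tfrac{1}{n}\tsum_{i=1}^{n} \boldsymbol{\Psi}_{\boldsymbol{\tau}}(\boldsymbol{y}_i;\boldsymbol{\theta}_0) \overset{\mathcal{P}}{\longrightarrow} \tfrac{1}{\tau+1} E_{\boldsymbol{\theta}}[\boldsymbol{Y}_{\tau}(\boldsymbol{\theta}_0)],
\end{equation*}
and this limit is nonzero by hypothesis. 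Since $\boldsymbol{K}_{\boldsymbol{\tau}}(\boldsymbol{\theta}_0)^{-1}$ is a fixed positive definite matrix (continuity in $\boldsymbol{\theta}_0$ is not required as we evaluate at a single point), the continuous mapping theorem yields
\begin{equation*}
\tfrac{1}{n} R_{\boldsymbol{\tau}}(\boldsymbol{\theta}_0) \overset{\mathcal{P}}{\longrightarrow} c := \tfrac{1}{(\tau+1)^2} E_{\boldsymbol{\theta}}[\boldsymbol{Y}_{\tau}(\boldsymbol{\theta}_0)]^{T} \boldsymbol{K}_{\boldsymbol{\tau}}(\boldsymbol{\theta}_0)^{-1} E_{\boldsymbol{\theta}}[\boldsymbol{Y}_{\tau}(\boldsymbol{\theta}_0)] > 0.
\end{equation*}

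From here the conclusion is immediate: since $R_{\boldsymbol{\tau}}(\boldsymbol{\theta}_0)/n$ converges in probability to the positive constant $c$, we have $R_{\boldsymbol{\tau}}(\boldsymbol{\theta}_0) \overset{\mathcal{P}}{\longrightarrow} +\infty$, and because the critical value $\chi_{d,\alpha}^2$ does not depend on $n$, $P_{\boldsymbol{\theta}}(R_{\boldsymbol{\tau}}(\boldsymbol{\theta}_0) > \chi_{d,\alpha}^2) \to 1$.

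The only genuine subtlety is conceptual rather than technical: one must remember that the expectation in the WLLN step is taken under the \emph{true} parameter $\boldsymbol{\theta}$, not under $\boldsymbol{\theta}_0$. Under the null, unbiasedness of the estimating function forces $E_{\boldsymbol{\theta}_0}[\boldsymbol{Y}_{\tau}(\boldsymbol{\theta}_0)] = \boldsymbol{0}_d$, which is precisely what makes the quadratic form $O_p(1)$ in Theorem \ref{thm:asymptrao}; the hypothesis $E_{\boldsymbol{\theta}}[\boldsymbol{Y}_{\tau}(\boldsymbol{\theta}_0)] \neq \boldsymbol{0}_d$ in the present theorem is what breaks this cancellation and produces the $O_p(n)$ divergence needed for consistency. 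No further work is required beyond invoking the WLLN and the continuous mapping theorem.
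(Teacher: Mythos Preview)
Your proof is correct and follows essentially the same route as the paper: both arguments use the weak law of large numbers to show $\tfrac{1}{n}\boldsymbol{U}_n^{\boldsymbol{\tau}}(\boldsymbol{\theta}_0)\to \tfrac{1}{\tau+1}E_{\boldsymbol{\theta}}[\boldsymbol{Y}_{\tau}(\boldsymbol{\theta}_0)]\neq\boldsymbol{0}_d$, deduce that $\tfrac{1}{n}R_{\boldsymbol{\tau}}(\boldsymbol{\theta}_0)$ converges to a strictly positive constant, and conclude that the rejection probability tends to one. Your write-up is in fact slightly more explicit than the paper's, spelling out the role of the continuous mapping theorem and the positive definiteness of $\boldsymbol{K}_{\boldsymbol{\tau}}(\boldsymbol{\theta}_0)^{-1}$.
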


\begin{proof}
From the previous results, it holds that
\begin{equation*}
\frac{1}{n}\boldsymbol{U}_{n}^{\boldsymbol{\tau }}(\boldsymbol{\theta }_{0})=%
\frac{1}{n}\tsum \limits_{i=1}^{n}\frac{1}{\tau +1}\boldsymbol{\Psi }_{%
\boldsymbol{\tau }}(\boldsymbol{Y}_{i};\boldsymbol{\theta }_{0}\boldsymbol{)=%
}\frac{1}{\tau +1}\frac{\partial }{\partial \boldsymbol{\theta }}H_{n}^{\tau
}(\boldsymbol{\theta }_{0})\underset{n\rightarrow \infty }{\overset{P}{%
\rightarrow }}\frac{1}{\tau +1}E_{\boldsymbol{\theta }}\left[ \boldsymbol{Y}%
_{\tau }(\boldsymbol{\theta }_{0}\boldsymbol{)}\right] ,
\end{equation*}%
where $\boldsymbol{Y}_{\tau }(\boldsymbol{\theta }_{0}\boldsymbol{)}$ is as
defined in (\ref{117a}). Therefore,%
\begin{align*}
P_{\theta }\left( R_{\boldsymbol{\tau }}\left( \boldsymbol{\theta }%
_{0}\right) >\chi _{d,\alpha }^{2}\right) & =P_{\theta }\left( \tfrac{1}{n}%
R_{\boldsymbol{\tau }}\left( \boldsymbol{\theta }_{0}\right) >\tfrac{1}{n}%
\chi _{d,\alpha }^{2}\right) \\
& \underset{n\rightarrow \infty }{\longrightarrow }\mathrm{I}\left( \frac{1}{%
\left( \tau +1\right) ^{2}}E_{\boldsymbol{\theta }}\left[ \boldsymbol{Y}%
_{\tau }(\boldsymbol{\theta }_{0}\boldsymbol{)}\right] \boldsymbol{K}_{\tau
}^{-1}\left( \boldsymbol{\theta }\right) E_{\boldsymbol{\theta }}^{T}\left[ 
\boldsymbol{Y}_{\tau }(\boldsymbol{\theta }_{0}\boldsymbol{)}\right]
>0\right) =1,
\end{align*}%
where $\mathrm{I}(\cdot )$ is the indicator function.
\end{proof}

A natural question that arises here is how the asymptotic power of
different test statistics considered for testing the hypothesis in (\ref{107}) could be compared. Lehmann (1959) stated that contiguous alternative hypotheses are of great interest in practical use for such purposes, as their associated power functions do not converge to 1. In this regard, we
%That is, one may be interested in evaluating the performance of the test when the true parameter does not belong to the null hypothesis, but is close to it
next derive the asymptotic distribution of $R_{\boldsymbol{\tau }%
}\left( \boldsymbol{\theta }_{0}\right) $ under local Pitman-type
alternative hypotheses of the form $$H_{1,n}:\boldsymbol{\theta }=\boldsymbol{\theta }_{n}:= \boldsymbol{\theta }_{0}+n^{-1/2}\boldsymbol{l},$$ where $\boldsymbol{l}$ is a $d$-dimensional normal vector and $\boldsymbol{\theta }_{0}$ is the closest element to the null hypothesis.
The next result determines the asymptotic  power of the Rao-type test based on RMDPDGE under contiguous alternative hypothesis.

\begin{theorem}
Let $\boldsymbol{Y}_{1},...,\boldsymbol{Y}_{n}$ be independent and
identically distributed observations from a $m$-dimensional random vector $%
\boldsymbol{Y}$ with $E_{\boldsymbol{\theta }}\left[ \boldsymbol{Y}\right] =%
\boldsymbol{\mu }\left( \boldsymbol{\theta }\right) $ and $Cov_{\boldsymbol{%
\theta }}\left[ \boldsymbol{Y}\right] =\boldsymbol{\Sigma }\left( 
\boldsymbol{\theta }\right) $, $\boldsymbol{\theta }\in \Theta \subset 
\mathbb{R}^{d}.$ Under the contiguous alternative hypothesis of the form $$H_{1,n}:%
\boldsymbol{\theta }_{n}=\boldsymbol{\theta }_{0}+n^{-1/2}\boldsymbol{l},$$
% with $\boldsymbol{l\in }\mathbb{R}^{d},$ 
the asymptotic distribution of the Rao-type test based on RMDPDGE, $R_{\boldsymbol{\tau }}\left( \boldsymbol{%
\theta }_{0}\right) ,$ is a non-central chi-square distribution with $d$
degrees of freedom and non-centrality parameter given by 
\begin{equation*}
\delta _{\tau }(\boldsymbol{\theta }_{0},\boldsymbol{l})=\boldsymbol{l}^{T}%
\boldsymbol{J}_{\tau }\left( \boldsymbol{\theta }_{0}\right) \boldsymbol{K}%
_{\tau }^{-1}\left( \boldsymbol{\theta }_{0}\right) \boldsymbol{J}_{\tau
}\left( \boldsymbol{\theta }_{0}\right) \mathbf{l}\boldsymbol{.}
\end{equation*}
\end{theorem}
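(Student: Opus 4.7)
The plan is to derive the asymptotic distribution of $n^{-1/2}\boldsymbol{U}_{n}^{\boldsymbol{\tau}}(\boldsymbol{\theta}_{0})$ under the contiguous alternatives $H_{1,n}$ and then apply the classical quadratic form result for multivariate normals. First, I would note that under $H_{1,n}$ the true parameter is $\boldsymbol{\theta}_{n}=\boldsymbol{\theta}_{0}+n^{-1/2}\boldsymbol{l}$, so by Proposition \ref{Proposition1} (applied at the true parameter $\boldsymbol{\theta}_{n}$ and using continuity of $\boldsymbol{K}_{\boldsymbol{\tau}}(\cdot)$ together with $\boldsymbol{\theta}_{n}\to \boldsymbol{\theta}_{0}$) we have
\begin{equation*}
\tfrac{1}{\sqrt{n}}\boldsymbol{U}_{n}^{\boldsymbol{\tau}}(\boldsymbol{\theta}_{n})\underset{n\to\infty}{\overset{\mathcal{L}}{\longrightarrow}}\mathcal{N}\!\left(\boldsymbol{0}_{d},\boldsymbol{K}_{\boldsymbol{\tau}}(\boldsymbol{\theta}_{0})\right).
\end{equation*}

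Next, I would perform a first-order Taylor expansion of $\boldsymbol{U}_{n}^{\boldsymbol{\tau}}$ around $\boldsymbol{\theta}_{n}$:
\begin{equation*}
\tfrac{1}{\sqrt{n}}\boldsymbol{U}_{n}^{\boldsymbol{\tau}}(\boldsymbol{\theta}_{0})=\tfrac{1}{\sqrt{n}}\boldsymbol{U}_{n}^{\boldsymbol{\tau}}(\boldsymbol{\theta}_{n})+\tfrac{1}{n}\left.\tfrac{\partial \boldsymbol{U}_{n}^{\boldsymbol{\tau}}(\boldsymbol{\theta})}{\partial \boldsymbol{\theta}^{T}}\right|_{\boldsymbol{\theta}^{\ast}}\sqrt{n}(\boldsymbol{\theta}_{0}-\boldsymbol{\theta}_{n}),
\end{equation*}
where $\boldsymbol{\theta}^{\ast}$ lies on the segment between $\boldsymbol{\theta}_{0}$ and $\boldsymbol{\theta}_{n}$, hence $\boldsymbol{\theta}^{\ast}\to\boldsymbol{\theta}_{0}$. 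Since $\boldsymbol{U}_{n}^{\boldsymbol{\tau}}(\boldsymbol{\theta})=\tfrac{n}{\tau+1}\tfrac{\partial}{\partial\boldsymbol{\theta}}H_{n}^{\tau}(\boldsymbol{\theta})$, Proposition \ref{Proposition2} (together with continuity of $\boldsymbol{J}_{\boldsymbol{\tau}}(\cdot)$) gives $\tfrac{1}{n}\tfrac{\partial \boldsymbol{U}_{n}^{\boldsymbol{\tau}}(\boldsymbol{\theta})}{\partial \boldsymbol{\theta}^{T}}|_{\boldsymbol{\theta}^{\ast}}\overset{P}{\to}-\boldsymbol{J}_{\boldsymbol{\tau}}(\boldsymbol{\theta}_{0})$, and noticing $\sqrt{n}(\boldsymbol{\theta}_{0}-\boldsymbol{\theta}_{n})=-\boldsymbol{l}$, Slutsky's theorem yields
\begin{equation*}
\tfrac{1}{\sqrt{n}}\boldsymbol{U}_{n}^{\boldsymbol{\tau}}(\boldsymbol{\theta}_{0})\underset{n\to\infty}{\overset{\mathcal{L}}{\longrightarrow}}\mathcal{N}\!\left(\boldsymbol{J}_{\boldsymbol{\tau}}(\boldsymbol{\theta}_{0})\boldsymbol{l},\,\boldsymbol{K}_{\boldsymbol{\tau}}(\boldsymbol{\theta}_{0})\right).
\end{equation*}

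Finally, I would rewrite the test statistic as the quadratic form
\begin{equation*}
R_{\boldsymbol{\tau}}(\boldsymbol{\theta}_{0})=\left(\tfrac{1}{\sqrt{n}}\boldsymbol{U}_{n}^{\boldsymbol{\tau}}(\boldsymbol{\theta}_{0})\right)^{T}\boldsymbol{K}_{\boldsymbol{\tau}}(\boldsymbol{\theta}_{0})^{-1}\left(\tfrac{1}{\sqrt{n}}\boldsymbol{U}_{n}^{\boldsymbol{\tau}}(\boldsymbol{\theta}_{0})\right),
\end{equation*}
and apply the standard result that if $\boldsymbol{Z}\sim\mathcal{N}(\boldsymbol{\nu},\boldsymbol{\Sigma})$ then $\boldsymbol{Z}^{T}\boldsymbol{\Sigma}^{-1}\boldsymbol{Z}\sim\chi^{2}_{d}(\boldsymbol{\nu}^{T}\boldsymbol{\Sigma}^{-1}\boldsymbol{\nu})$. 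With $\boldsymbol{\nu}=\boldsymbol{J}_{\boldsymbol{\tau}}(\boldsymbol{\theta}_{0})\boldsymbol{l}$ and $\boldsymbol{\Sigma}=\boldsymbol{K}_{\boldsymbol{\tau}}(\boldsymbol{\theta}_{0})$, the non-centrality parameter is exactly $\delta_{\tau}(\boldsymbol{\theta}_{0},\boldsymbol{l})=\boldsymbol{l}^{T}\boldsymbol{J}_{\tau}(\boldsymbol{\theta}_{0})\boldsymbol{K}_{\tau}^{-1}(\boldsymbol{\theta}_{0})\boldsymbol{J}_{\tau}(\boldsymbol{\theta}_{0})\boldsymbol{l}$, and the continuous mapping theorem delivers the desired non-central $\chi^{2}_{d}$ limit.

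The main delicate point is the Taylor step: one must verify that the convergence in Proposition \ref{Proposition2} holds uniformly on a shrinking neighborhood of $\boldsymbol{\theta}_{0}$ (so that evaluating at the intermediate $\boldsymbol{\theta}^{\ast}$ is legitimate) and justify interchanging limits under the sequence of contiguous laws. In practice this follows from the standard regularity assumptions implicit in the paper (continuity/smoothness of $\boldsymbol{\mu}(\boldsymbol{\theta})$ and $\boldsymbol{\Sigma}(\boldsymbol{\theta})$, together with Le Cam's first/third lemmas via contiguity), but it is the only nontrivial technical ingredient beyond what has already been established in Propositions \ref{Proposition1} and \ref{Proposition2}.
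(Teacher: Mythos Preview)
Your proposal is correct and follows essentially the same approach as the paper: a first-order Taylor expansion of the score $\boldsymbol{U}_{n}^{\tau}$ linking $\boldsymbol{\theta}_{0}$ and $\boldsymbol{\theta}_{n}$, combined with Propositions~\ref{Proposition1} and~\ref{Proposition2}, and then the standard quadratic-form-to-noncentral-$\chi^{2}$ result. The only cosmetic difference is that you expand $\boldsymbol{U}_{n}^{\tau}(\boldsymbol{\theta}_{0})$ around $\boldsymbol{\theta}_{n}$ while the paper expands $\boldsymbol{U}_{n}^{\tau}(\boldsymbol{\theta}_{n})$ around $\boldsymbol{\theta}_{0}$; this leads to opposite signs in the limiting mean ($+\boldsymbol{J}_{\tau}\boldsymbol{l}$ versus $-\boldsymbol{J}_{\tau}\boldsymbol{l}$), but of course the non-centrality parameter is unaffected.
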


\begin{proof}
Consider the Taylor series expansion%
\begin{equation*}
\frac{1}{\sqrt{n}}\boldsymbol{U}_{n}^{\boldsymbol{\tau }}(\boldsymbol{\theta 
}_{n}\boldsymbol{)}=\frac{1}{\sqrt{n}}\boldsymbol{U}_{n}^{\tau }(\boldsymbol{%
\theta }_{0})+\frac{1}{n}\left. \frac{\partial \boldsymbol{U}_{n}^{\tau }(%
\boldsymbol{\theta })}{\partial \boldsymbol{\theta }^{T}}\right \vert _{%
\boldsymbol{\theta =\theta }_{n}^{\ast }}\mathbf{l}\boldsymbol{,}
\end{equation*}%
where $\boldsymbol{\theta }_{n}^{\ast }$ belongs to the line segment joining 
$\boldsymbol{\theta }_{0}$ and $\boldsymbol{\theta }_{0}+\tfrac{1}{\sqrt{n}}%
\boldsymbol{l}$. Now, by proposition \ref{Proposition2}%
\begin{equation*}
\frac{1}{n}\frac{\partial \boldsymbol{U}_{n}^{\tau }(\boldsymbol{\theta })}{%
\partial \boldsymbol{\theta }^{T}}=\frac{1}{\tau +1}\frac{\partial
^{2}H_{n}^{\tau }\left( \boldsymbol{\theta }\right) }{\partial \boldsymbol{%
\theta }\text{ }\boldsymbol{\partial \theta }^{T}}\underset{n\longrightarrow
\infty }{\overset{\mathcal{P}}{\longrightarrow }}-\boldsymbol{J}_{%
\boldsymbol{\tau }}(\boldsymbol{\theta })
\end{equation*}%
Therefore, 
\begin{equation*}
\left. \frac{1}{\sqrt{n}}\boldsymbol{U}_{n}^{\tau }(\boldsymbol{\theta }%
)\right \vert _{\boldsymbol{\theta =\theta }_{0}\boldsymbol{+}n^{-1/2}%
\boldsymbol{d}}\underset{n\rightarrow \infty }{\overset{\mathcal{L}}{%
\longrightarrow }}\mathcal{N}\left( -\boldsymbol{J}_{\tau }\left( 
\boldsymbol{\theta }_{0}\right) \boldsymbol{l},\text{ }\boldsymbol{K}_{\tau
}\left( \boldsymbol{\theta }_{0}\right) \right) ,
\end{equation*}%
and 
\begin{equation*}
R_{\boldsymbol{\tau }}\left( \boldsymbol{\theta }_{0}\right) \underset{%
n\rightarrow \infty }{\overset{\mathcal{L}}{\longrightarrow }}\chi
_{p}^{2}\left( \delta _{\tau }(\boldsymbol{\theta }_{0},\boldsymbol{l}%
)\right) ,
\end{equation*}%
with $\delta _{\tau }(\boldsymbol{\theta }_{0},\boldsymbol{d})$ $\ $given by%
\begin{equation*}
\delta _{\tau }(\boldsymbol{\theta }_{0},\boldsymbol{l})=\boldsymbol{l}^{T}%
\boldsymbol{J}_{\tau }\left( \boldsymbol{\theta }_{0}\right) \boldsymbol{K}%
_{\tau }^{-1}\left( \boldsymbol{\theta }_{0}\right) \boldsymbol{J}_{\tau
}\left( \boldsymbol{\theta }_{0}\right) \boldsymbol{l}.
\end{equation*}%
\begin{equation*}
\delta _{\tau }(\boldsymbol{\theta }_{0},\boldsymbol{l})=\boldsymbol{l}^{T}%
\boldsymbol{J}_{\tau }\left( \boldsymbol{\theta }_{0}\right) \boldsymbol{K}%
_{\tau }^{-1}\left( \boldsymbol{\theta }_{0}\right) \boldsymbol{J}_{\tau
}\left( \boldsymbol{\theta }_{0}\right) \boldsymbol{l}.
\end{equation*}
\end{proof}

\begin{remark}
The previous result can be used for defining an approximation to the power
function under any alternative hypothesis, $\boldsymbol{\theta} \in \Theta \setminus \Theta_0,$ given as
%in $\boldsymbol{\theta }\neq \boldsymbol{\theta }_{0},$ We can write,%
\begin{equation*}
\boldsymbol{\theta }=\boldsymbol{\theta }-\boldsymbol{\theta }_{0}+%
\boldsymbol{\theta }_{0}=\sqrt{n}\frac{1}{\sqrt{n}}\left( \boldsymbol{\theta 
}-\boldsymbol{\theta }_{0}\right) +\boldsymbol{\theta }_{0}=\boldsymbol{%
\theta }_{0}+n^{-1/2}\boldsymbol{l}
\end{equation*}%
with $\boldsymbol{l=}\sqrt{n}\left( \boldsymbol{\theta }-\boldsymbol{\theta }%
_{0}\right) .$
\end{remark}

\begin{remark}
The family of Rao-type tests, $R_{\boldsymbol{\tau }}\left( \boldsymbol{%
\theta }_{0}\right) ,$ presented in this Section for simple null hypothesis
can be extended to composite null hypothesis. If we are interested in
testing $H_{0}:\boldsymbol{\theta }\in \boldsymbol{\Theta }_{0}=\left \{ 
\boldsymbol{\theta \in }\Theta /\text{ }\boldsymbol{g}(\boldsymbol{\theta )}=%
\boldsymbol{0}_{r}\right \} $ we can consider the family of Rao-type tests
given by 
\begin{equation}
R_{\boldsymbol{\tau }}\left( \widetilde{\boldsymbol{\theta }}_{G}^{\tau
}\right) =\frac{1}{n}\boldsymbol{U}_{n}^{\tau }(\widetilde{\boldsymbol{%
\theta }}_{G}^{\tau })^{T}\boldsymbol{Q}_{\boldsymbol{\tau }}(\widetilde{%
\boldsymbol{\theta }}_{G}^{\tau })\left[ \boldsymbol{Q}_{\boldsymbol{\tau }}(%
\widetilde{\boldsymbol{\theta }}_{G}^{\tau })\boldsymbol{K}_{\tau }\left( 
\widetilde{\boldsymbol{\theta }}_{G}^{\tau }\right) \boldsymbol{Q}_{%
\boldsymbol{\tau }}(\widetilde{\boldsymbol{\theta }}_{G}^{\tau })\right]
^{-1}\boldsymbol{Q}_{\boldsymbol{\tau }}(\widetilde{\boldsymbol{\theta }}%
_{G}^{\tau })^{T}\boldsymbol{U}_{n}^{\tau }(\widetilde{\boldsymbol{\theta }}%
_{G}^{\tau }).  \label{117aaaa}
\end{equation}%
However, the extension of the presented results for the family of robust test statistics defined in (\ref{117aaaa}) is not trivial, and it will be established in future research.
%\ that will be finished soon, in the line of the paper of Basu et al (2022b).

In partircular, the simple null hypothesis in (\ref{117})
can be written as a composite null hypothesis with $\boldsymbol{g}(%
\boldsymbol{\theta )=\theta }-\boldsymbol{\theta }_{0}.$ In this case, $%
\boldsymbol{G}(\boldsymbol{\theta })$ reduces to the identity matrix of dimension $p,$ %\boldsymbol{I}_{p}.$ 
the restricted estimator $\widetilde{\boldsymbol{\theta }}_{G}^{\tau }$ coincides with $\boldsymbol{\theta }_{0}$ and the Rao-type test statistic in (\ref{117aaaa}) $%
R_{\boldsymbol{\tau }}\left( \widetilde{\boldsymbol{\theta }}_{G}^{\tau
}\right), $  coincides with the proposed $R_{\boldsymbol{\tau }%
}\left( \boldsymbol{\theta }_{0}\right) $ given in (\ref{117AAA}).For multidimensional normal populations Martin (2021) developed Rao-type test statistics based on the RMDPDE.

\end{remark}

\subsection{Rao-type tests based on MDPDGE for univariate distributions} %and $\boldsymbol{\protect\theta }\in \Theta $}

Let $Y_{1},....,Y_{n}$ a random sample from the population $Y,$ with 
\begin{equation*}
E\left[ Y\right] =\mu \left( \theta \right) \text{ and }Var\left[ Y\right]
=\sigma ^{2}\left( \theta \right) .
\end{equation*}

Based on (\ref{116a}) the estimating equation is given by%
\begin{equation*}
\tsum \limits_{i=1}^{n}\Psi _{\tau }\left( y_{i},\theta \right) =0
\end{equation*}%
with

\begin{eqnarray}
\Psi _{\tau }\left( y_{i},\theta \right) &=&\frac{\left( \tau +1\right)
\left( \sigma ^{2}\left( \theta \right) \right) ^{-\tau /2}}{2\left( 2\pi
\right) ^{\tau /2}}\left \{ \left[ -\frac{\partial \log \sigma ^{2}\left(
\theta \right) }{\partial \theta }+\frac{\partial \log \sigma ^{2}\left(
\theta \right) }{\partial \theta }\right. \right.  \label{118} \\
&&\left. (\ref{117aaaa})+2\frac{\partial \mu \left( \theta \right) }{%
\partial \theta }\left( y_{i}-\mu \left( \theta \right) \right) \frac{1}{%
\sigma ^{2}\left( \theta \right) }\right]  \notag \\
&&\left. \exp \left( -\frac{\tau }{2\sigma ^{2}\left( \theta \right) }\left(
y_{i}-\mu \left( \theta \right) \right) ^{2}\right) +\frac{\tau }{\left(
1+\tau \right) ^{3/2}}\frac{\partial \log \sigma ^{2}\left( \theta \right) }{%
\partial \theta }\right \} .  \notag
\end{eqnarray}%
Moreover the expressions of $J_{\tau }\left( \theta \right) $ and $K_{\tau }\left(
\theta \right) $ are, respectively, given by%
\begin{eqnarray*}
J_{\tau }\left( \theta \right) &=&\frac{1}{\left( 2\pi \sigma \left( \theta
\right) ^{2}\right) ^{\frac{\tau }{2}}}\frac{1}{\left( 1+\tau \right) ^{5/2}}%
\left[ \left( \tau +1\right) \sigma ^{-2}\left( \theta \right) \left( \frac{%
\partial \mu \left( \theta \right) }{\partial \theta }\right) ^{2}+\frac{%
\tau ^{2}}{4}\left( \frac{\partial \log \sigma ^{2}\left( \theta \right) }{%
\partial \theta }\right) ^{2}\right. \\
&&\left. +\frac{1}{2}\left( \frac{\partial \log \sigma ^{2}\left( \theta
\right) }{\partial \theta }\right) ^{2}\right]
\end{eqnarray*}%
and 
\begin{eqnarray}
K_{\tau }\left( \theta \right) &=&\left( \frac{1}{\left( 2\pi \right)
^{1/2}\sigma \left( \theta \right) }\right) ^{2\tau }\left \{ \frac{1}{%
\left( 1+2\tau \right) ^{5/2}}\left[ \tau ^{2}\left( \frac{\partial \log
\sigma ^{2}\left( \theta \right) }{\partial \theta }\right) ^{2}\right.
\right.  \label{119} \\
&&\left. +\left( 1+2\tau \right) \sigma ^{-2}\left( \theta \right) \left( 
\frac{\partial \mu \left( \theta \right) }{\partial \theta }\right) ^{2}+%
\frac{1}{2}\left( \frac{\partial \log \sigma ^{2}\left( \theta \right) }{%
\partial \theta }\right) ^{2}\right]  \notag \\
&&\left. -\frac{\tau ^{2}}{4\left( 1+\tau \right) ^{3}}\left( \frac{\partial
\log \sigma ^{2}\left( \theta \right) }{\partial \theta }\right) ^{2}\right
\} .  \notag
\end{eqnarray}%
Therefore, if we are interesting in testing 
\begin{equation*}
H_{0}:\theta =\theta _{0}\text{ versus }H_{1}:\theta \neq \theta _{0},
\end{equation*}%
 the Rao-type tests based on RMDPDGE is given by, $$R_{\boldsymbol{\tau }}\left( \theta _{0}\right) =\frac{1}{n}U_{n}^{\boldsymbol{\tau }}(\theta _{0})^{2}K_{\boldsymbol{\tau }}\left( \theta _{0}\right) ^{-1},$$
 where 
 \begin{equation*}
 	U_{n}^{\boldsymbol{\tau }}(\theta _{0})=\frac{1}{\tau +1}\tsum%
 	\limits_{i=1}^{n}\Psi _{\tau }\left( y_{i},\theta \right)
 \end{equation*}
 and $\Psi _{\tau }\left( y_{i},\theta \right) $ and  $K_{\tau }\left( \theta \right) $ are given as in (\ref{118}) and (\ref{119}).
The null hypothesis is rejected if 
\begin{equation*}
R_{\boldsymbol{\tau }}\left( \theta _{0}\right) >\chi _{1,\alpha }^{2},
\end{equation*}%
where $\chi _{1,\alpha }^{2}$ is the upper $1-\alpha$ quantile of a chi-square distribution with 1 degree of freedom.

We finally derive explicit expressions of the Rao-type test statistics under Poisson and exponential models.

\subsubsection{Poisson Model}

We us assume that the random variable $Y$ is Poisson with parameter $%
\theta .$ In this case it is well known that $E\left[ Y\right] =Var\left[ Y\right] =\theta,$ and so the RMDPDGE, for $\tau >0,$ is given by, 
\begin{equation*}
\widehat{\theta }_{G}^{\tau }=\arg \max_{\theta }\left \{ \frac{\tau +1}{%
\tau \left( 2\pi \theta \right) ^{\frac{\tau }{2}}}\left( \frac{1}{n}\tsum
\limits_{i=1}^{n}\exp \left( -\frac{\tau }{2\theta }\left( y_{i}-\theta
\right) ^{2}\right) -\frac{\tau }{\left( 1+\tau \right) ^{3/2}}\right) -%
\frac{1}{\tau }\right \}.
\end{equation*}%
At $\tau = 0,$ the RMDPDGE reduces to the restricted MLE,
\begin{equation*}
\widehat{\theta }_{G}=\arg \max_{\theta }\left \{ -\frac{1}{2}\log 2\pi -%
\frac{1}{2}\log \theta -\frac{1}{n}\tsum \limits_{i=1}^{n}\frac{1}{2\theta }%
\left( y_{i}-\theta \right) ^{2}\right \} .
\end{equation*}

On the other hand the score function $\Psi _{\tau }\left( \cdot \right) $ is
\begin{equation*}
\Psi _{\tau }\left( y_{i},\theta \right) =\frac{\tau +1}{2\left( 2\pi \theta
\right) ^{\frac{\tau }{2}}\theta ^{2}}\left \{ \left( -2\theta
^{2}+y_{i}^{2}\right) \exp \left( -\frac{\tau }{2\theta }\left( y_{i}-\theta
\right) ^{2}\right) +\frac{\tau \theta }{\left( 1+\tau \right) ^{\frac{3}{2}}%
}\right \}
\end{equation*}%
and naturally at $\tau =0,$ we obtain the score function of the MLE presented in Zhang (2019)
\begin{equation*}
\Psi _{0}\left( y_{i},\theta \right) =\frac{1}{2\theta ^{2}}\left( -2\theta
^{2}+y_{i}^{2}\right) .
\end{equation*}

On the other hand, the matrix $K_{\tau }\left( \theta \right)$ under the Poisson model has the explicit expression
\begin{equation*}
K_{\tau }\left( \theta \right) =\left( \frac{1}{2\pi }\right) ^{\tau }\frac{1%
}{2\theta ^{2+\tau }}\left \{ \frac{1}{\left( 1+2\tau \right) ^{5/2}}\left(
\left( 2\tau ^{2}+2\theta +4\theta \tau +1\right) -\frac{\tau ^{2}}{2\left(
1+\tau \right) ^{3}}\right) \right \}
\end{equation*}%
and hence the  Rao-type tests based on RMDPDGE,
$R_{\boldsymbol{\tau }}\left( \theta _{0}\right) ,$ for testing simple null hypothesis 
%\begin{equation*}
%H_{0}:\theta =\theta _{0}\text{ versus }H_{1}:\theta \neq \theta _{0},
%\end{equation*}%
is given,  for $\tau >0,$  by 
\begin{eqnarray*}
R_{\boldsymbol{\tau }}\left( \theta _{0}\right) &=&\frac{1}{n}\frac{1}{%
\left( 2\left( 2\pi \theta \right) ^{\frac{\tau }{2}}\theta ^{2}\right) ^{2}}%
\left( \tsum \limits_{i=1}^{n}\left( \left( -2\theta
_{0}^{2}+y_{i}^{2}\right) \exp \left( -\frac{\tau }{2\theta _{0}}\left(
y_{i}-\theta _{0}\right) ^{2}\right) +\frac{\tau \theta }{\left( 1+\tau
\right) ^{\frac{3}{2}}}\right) \right) ^{2} \\
&&\times \left( 2\pi \right)^{\tau } (2\theta ^{2+\tau}) \left( 1+2\tau \right) ^{5/2} \left \{\left(
\left( 2\tau ^{2}+2\theta +4\theta \tau +1\right) -\frac{\tau ^{2}}{2\left(
	1+\tau \right) ^{3}}\right) \right \} ^{-1}.
\end{eqnarray*}%
Again for $\tau =0$ we get the expression of the classical Rao test based on the MLE,
\begin{equation*}
R_{0}\left( \theta _{0}\right) =\frac{1}{4n}\left( \tsum
\limits_{i=1}^{n}\left( \frac{-2\theta _{0}^{2}+y_{i}^{2}}{\theta _{0}^{2}}%
\right) \right) ^{2} \frac{2\theta _{0}^{2}}{2\theta _{0}+1}.
\end{equation*}%

In practical use, the  null hypothesis is rejected if 
\begin{equation*}
R_{\boldsymbol{\tau }}\left( \theta _{0}\right) >\chi _{1,\alpha }^{2}.
\end{equation*}

\subsubsection{Exponential model}

Let assume now that the random variable $Y$ comes from an exponential distribution with probability density function,
\begin{equation}
f_{\theta }(x)=\frac{1}{\theta }\exp \left( -\frac{x}{\theta }\right) ,\text{
}x>0.  \label{119a}
\end{equation}

In this case the true mean and variance are given $E\left[ Y\right] =\theta $ and $Var\left[ Y\right] =\theta^{2}.$ The RMDPDGE under the exponential model, for $\tau >0,$ is given by 
\begin{equation*}
\widehat{\theta }_{G}^{\tau }=\arg \max_{\theta }\left \{ \frac{\tau +1}{%
\tau }\left( \frac{1}{\theta \sqrt{2\pi }}\right) ^{\tau }\left( \frac{1}{n}%
\tsum \limits_{i=1}^{n}\exp \left( -\frac{\tau }{2}\left( \frac{y_{i}-\theta 
}{\theta }\right) ^{2}\right) -\frac{\tau }{\left( 1+\tau \right) ^{3/2}}%
\right) -\frac{1}{\tau }\right \} ,
\end{equation*}%
and for $\tau \rightarrow 0,$ we have
\begin{equation*}
\widehat{\theta }_{G}=\arg \max_{\theta }\left \{ -\frac{1}{2}\log 2\pi
-\log \theta -\frac{1}{n}\tsum \limits_{i=1}^{n}\frac{1}{2}\left( \frac{%
y_{i}-\theta }{\theta }\right) ^{2}\right \} .
\end{equation*}

On the other hand, the score function is
\begin{equation*}
\Psi _{\tau }\left( y_{i},\theta \right) =\frac{\left( \tau +1\right) }{%
\theta ^{\tau +3}\left( \sqrt{2\pi }\right) ^{\tau }}\left \{ \left(
y_{i}^{2}-y_{i}\theta -\theta ^{2}\right) \exp \left( -\frac{\tau }{2}\left( 
\frac{y_{i}-\theta }{\theta }\right) ^{2}\right) +\frac{\tau \theta ^{2}}{%
\left( 1+\tau \right) ^{\frac{3}{2}}}\right \} ,
\end{equation*}%
and for $\tau =0,$ we recover the score function of the Gaussian MLE,
\begin{equation*}
\Psi _{0}\left( y_{i},\theta \right) =\frac{1}{\theta ^{3}}\left(
y_{i}^{2}-y_{i}\theta +\theta ^{2}\right) .
\end{equation*}%
The matrix $K_{\tau }\left( \theta \right) $ has the expression 
\begin{equation*}
K_{\tau }\left( \theta \right) =\frac{1}{\left( 2\pi \right) ^{\tau }\theta
^{2\left( \tau +1\right) }}\left \{ \frac{1}{\left( 1+2\tau \right) ^{5/2}}%
\left( 4\tau ^{2}+2\tau +3\right) -\frac{\tau ^{2}}{\left( 1+\tau \right)
^{3}}\right \}
\end{equation*}%
and at $\tau=0$
\begin{equation*}
K_{0}\left( \theta \right) =\frac{2}{\theta ^{2}}.
\end{equation*}

Correspondingly, the Rao-type tests based on RMDPDGE for testing
\begin{equation*}
H_{0}:\theta =\theta _{0}\text{ versus }H_{1}:\theta \neq \theta _{0},
\end{equation*}%
is given,  for $\tau >0,$  by
\begin{eqnarray}
R_{\boldsymbol{\tau }}\left( \theta _{0}\right) &=&\frac{1}{n}\frac{1}{%
\theta _{0}^{2\tau +6}\left( 2\pi \right) ^{\tau }}\left( \tsum
\limits_{i=1}^{n}\left \{ \left( y_{i}^{2}-y_{i}\theta _{0}-\theta
_{0}^{2}\right) \exp \left( -\frac{\tau }{2}\left( \frac{y_{i}-\theta _{0}}{%
\theta _{0}}\right) ^{2}\right) \right. \right.  \label{120} \\
&&\left. \left. +\frac{\tau \theta _{0}^{2}}{\left( 1+\tau \right) ^{\frac{3%
}{2}}}\right \} \right) ^{2}\times 2n\theta_{0}^{4}  \left( \tsum \limits_{i=1}^{n}\left \{ \left( y_{i}^{2}-y_{i}\theta_{0}-\theta _{0}^{2}\right) \right \} \right) ^{-2}.
\notag
\end{eqnarray}%
%and by 
%\begin{equation}
%R_{\boldsymbol{0}}\left( \theta _{0}\right) =\frac{1}{2n}\frac{1}{\theta
%_{0}^{4}}  \label{120a}
%\end{equation}%
%for $\tau =0.$

\section{Simulation study}

We analyze here the performance of the Rao-type tests based on the MDPDGE, $%
R_{\boldsymbol{\tau }}\left( \theta_{0}\right) ,$ in terms of robustness and
efficiency. We compare the proposed general method assuming Gaussian
distribution with Rao-type test statistics based on the true parametric
distribution underlying the data. 
%with other test statistics based on the true distribution and not in the approximation given in this paper in which we assume only known the mean and the variance.

We consider the exponential model with density function $f_{\theta _{0}}(x)$
given in (\ref{119a}). For the exponential model, the Rao-type test
statistics based on MDPDGE is, for $\tau >0,$ as given in (\ref{120}) and
for $\tau =0$ as given in (\ref{120a}). To evaluate the robustness of the
tests we generate samples from an exponential mixture, 
\begin{equation*}
f_{\theta _{0}}^{\varepsilon }(x)=(1-\varepsilon )f_{\theta
_{0}}(x)+\varepsilon f_{2\theta _{0}}(x),
\end{equation*}%
where $\theta _{0}$ denotes the parameter of the exponential distribution
and $\varepsilon $ is the contamination proportion. The uncontaminated model
is thus obtained by setting $\varepsilon =0.$

For comparison purposes we have also considered the robust Rao-type tests
based on the restricted MDPDE, introduced and studied in Basu et al (2022b).
The efficiency loss caused by the Gaussian assumption should be advertised
by the poorer performance of the Rao-type tests based on the restricted
MDPDGE with respect to their analogous based on the restricted MDPDE. For
the exponential model, the family Rao-type test statistics based on the
restricted MDPDE is given, for $\beta >0,$ as 
\begin{equation*}
S_{n}^{\beta }(\theta _{0})=\left( \frac{4\beta ^{2}+1}{(2\beta +1)^{3}}-%
\frac{\beta ^{2}}{(\beta +1)^{4}}\right) ^{-1}\frac{1}{n}\left( \frac{1}{%
\theta _{0}}\sum_{i=1}^{n}\left( y_{i}-\theta _{0}\right) \exp \left( -\frac{%
\beta y_{i}}{\theta _{0}}\right) +\frac{n\beta }{(\beta +1)^{2}}\right) ^{2}.
\end{equation*}%
For $\beta =0$, the above test reduces to the classical Rao test given by 
\begin{equation*}
S_{n}\left( \theta _{0}\right) =S_{\beta =0,n}\left( \theta _{0}\right)
=\left( \sqrt{n}\frac{\bar{X}_{n}-\theta _{0}}{\theta _{0}}\right) ^{2}.
\end{equation*}

We consider the testing problem 
\begin{equation*}
H_{0}:\theta _{0}=2\text{ vs }H_{1}:\theta \neq 2.
\end{equation*}%
and we empirically examine the level and power of both Rao-type test
statistics, the usual test based on the parametric model and the
Gaussian-based test by setting the true value of the parameter $\theta
_{0}=2 $ and $\theta _{0}=1,$ respectively. Different sample sizes were
considered, namely $n=10,$ $20,$ $30,$ $40,$ $50,$ $60,$ $70,$ $80,$ $90,$ $%
100$ and $200,$ but simulation results were quite similar and so, for
brevity, we only report here results for $n=20$ and $n=40.$

The empirical level of the test is computed 
\begin{equation*}
\widehat{\alpha }_{n}\left( \varepsilon \right) =\frac{\text{Number of times}%
\left \{ R_{n}^{\tau }\left( \theta _{0}\right) \text{ (or }S_{n}^{\beta
}\left( \theta _{0}\right) )>\chi _{1,0.05}^{2}=3.84146\right \} }{\text{%
Number of simulated samples}}.
\end{equation*}%
We set $\varepsilon =0\%,5\%,$ $10\%$ and $20\%$ of contamination
proportions and perform the Monte-Carlo study over $R=10000$ replications.
The tuning parameters $\tau $ and $\beta $ are fixed from a grid of values,
namely $\{0,0.1,...,0.7\}.$

Simulation results are presented in Tables 1 and 2 for $n=20$ and $n=40,$
respectively. The empirical powers are denoted by $\widehat{\pi }_{n}\left(
\varepsilon \right) $ and we have considered $\varepsilon =0\%,$ $10\%,$ $%
10\%$ and $20\%.$ The robustness advantage in terms of level of both
Rao-type tests considered, $R_{\boldsymbol{\tau }}\left( \theta _{0}\right) $%
{\small \ and }$S_{n}^{\beta }(\theta _{0})$ with positive values of the
turning parameter with respect to the test statistics with $\tau =0$ and $%
\beta =0$ is clearly shown, as their simulated levels are closer to the
nominal in the presence of contamination.

\begin{equation*}
\begin{tabular}{|l||llllllll|}
\hline
$\tau $ & $\widehat{\alpha }_{20}\left( 0\right) $ & \multicolumn{1}{|l}{$%
\widehat{\alpha }_{20}\left( 0.05\right) $} & \multicolumn{1}{|l}{$\widehat{%
\alpha }_{20}\left( 0.10\right) $} & \multicolumn{1}{|l}{$\widehat{\alpha }%
_{20}\left( 0.20\right) $} & \multicolumn{1}{|l}{$\widehat{\pi }_{20}\left(
0\right) $} & \multicolumn{1}{|l}{$\widehat{\pi }_{20}\left( 0.1\right) $} & 
\multicolumn{1}{|l}{$\widehat{\pi }_{20}\left( 0.15\right) $} & 
\multicolumn{1}{|l|}{$\widehat{\pi }_{20}\left( 0.20\right) $} \\ \hline
0.0 & 0.2601 & \multicolumn{1}{|l}{0.3093} & \multicolumn{1}{|l}{0.3453} & 
\multicolumn{1}{|l}{0.4661} & \multicolumn{1}{|l}{0.9278} & 
\multicolumn{1}{|l}{0.6791} & \multicolumn{1}{|l}{0.6887} & 
\multicolumn{1}{|l|}{0.5088} \\ 
0.1 & 0.1895 & \multicolumn{1}{|l}{0.1748} & \multicolumn{1}{|l}{0.1561} & 
\multicolumn{1}{|l}{0.1989} & \multicolumn{1}{|l}{0.9544} & 
\multicolumn{1}{|l}{0.7213} & \multicolumn{1}{|l}{0.7301} & 
\multicolumn{1}{|l|}{0.0595} \\ 
0.2 & 0.2120 & \multicolumn{1}{|l}{0.1776} & \multicolumn{1}{|l}{0.1417} & 
\multicolumn{1}{|l}{0.1174} & \multicolumn{1}{|l}{0.9747} & 
\multicolumn{1}{|l}{0.8398} & \multicolumn{1}{|l}{0.8430} & 
\multicolumn{1}{|l|}{0.5095} \\ 
0.3 & 0.2532 & \multicolumn{1}{|l}{0.2113} & \multicolumn{1}{|l}{0.1660} & 
\multicolumn{1}{|l}{0.1275} & \multicolumn{1}{|l}{0.9826} & 
\multicolumn{1}{|l}{0.8963} & \multicolumn{1}{|l}{0.8961} & 
\multicolumn{1}{|l|}{0.7301} \\ 
0.4 & 0.2963 & \multicolumn{1}{|l}{0.2447} & \multicolumn{1}{|l}{0.1986} & 
\multicolumn{1}{|l}{0.1471} & \multicolumn{1}{|l}{0.9863} & 
\multicolumn{1}{|l}{0.9228} & \multicolumn{1}{|l}{0.9257} & 
\multicolumn{1}{|l|}{0.7893} \\ 
0.5 & 0.3243 & \multicolumn{1}{|l}{0.2773} & \multicolumn{1}{|l}{0.2307} & 
\multicolumn{1}{|l}{0.1695} & \multicolumn{1}{|l}{0.9875} & 
\multicolumn{1}{|l}{0.9363} & \multicolumn{1}{|l}{0.9386} & 
\multicolumn{1}{|l|}{0.8254} \\ 
0.6 & 0.3512 & \multicolumn{1}{|l}{0.3055} & \multicolumn{1}{|l}{0.2599} & 
\multicolumn{1}{|l}{0.1899} & \multicolumn{1}{|l}{0.9885} & 
\multicolumn{1}{|l}{0.9441} & \multicolumn{1}{|l}{0.9437} & 
\multicolumn{1}{|l|}{0.8434} \\ 
0.7 & 0.3751 & \multicolumn{1}{|l}{0.3258} & \multicolumn{1}{|l}{0.2762} & 
\multicolumn{1}{|l}{0.2060} & \multicolumn{1}{|l}{0.9884} & 
\multicolumn{1}{|l}{0.9466} & \multicolumn{1}{|l}{0.9469} & 
\multicolumn{1}{|l|}{0.8541} \\ \hline\hline
$\beta $ &  &  &  &  &  &  &  &  \\ \hline
0.0 & 0.0453 & \multicolumn{1}{|l}{0.0682} & \multicolumn{1}{|l}{0.1048} & 
\multicolumn{1}{|l}{0.1909} & \multicolumn{1}{|l}{0.7200} & 
\multicolumn{1}{|l}{0.4365} & \multicolumn{1}{|l}{0.4384} & 
\multicolumn{1}{|l|}{0.2323} \\ 
0.1 & 0.0476 & \multicolumn{1}{|l}{0.0602} & \multicolumn{1}{|l}{0.0780} & 
\multicolumn{1}{|l}{0.1417} & \multicolumn{1}{|l}{0.7799} & 
\multicolumn{1}{|l}{0.5223} & \multicolumn{1}{|l}{0.5267} & 
\multicolumn{1}{|l|}{0.3029} \\ 
0.2 & 0.0498 & \multicolumn{1}{|l}{0.0552} & \multicolumn{1}{|l}{0.0667} & 
\multicolumn{1}{|l}{0.1103} & \multicolumn{1}{|l}{0.7922} & 
\multicolumn{1}{|l}{0.5751} & \multicolumn{1}{|l}{0.5780} & 
\multicolumn{1}{|l|}{0.3558} \\ 
0.3 & 0.0494 & \multicolumn{1}{|l}{0.0517} & \multicolumn{1}{|l}{0.0584} & 
\multicolumn{1}{|l}{0.0897} & \multicolumn{1}{|l}{0.7882} & 
\multicolumn{1}{|l}{0.5997} & \multicolumn{1}{|l}{0.6024} & 
\multicolumn{1}{|l|}{0.3878} \\ 
0.4 & 0.0489 & \multicolumn{1}{|l}{0.0505} & \multicolumn{1}{|l}{0.0535} & 
\multicolumn{1}{|l}{0.0773} & \multicolumn{1}{|l}{0.7779} & 
\multicolumn{1}{|l}{0.6067} & \multicolumn{1}{|l}{0.6058} & 
\multicolumn{1}{|l|}{0.4106} \\ 
0.5 & 0.0494 & \multicolumn{1}{|l}{0.0498} & \multicolumn{1}{|l}{0.0504} & 
\multicolumn{1}{|l}{0.0692} & \multicolumn{1}{|l}{0.7634} & 
\multicolumn{1}{|l}{0.6048} & \multicolumn{1}{|l}{0.6037} & 
\multicolumn{1}{|l|}{0.4221} \\ 
0.6 & 0.0491 & \multicolumn{1}{|l}{0.0504} & \multicolumn{1}{|l}{0.0497} & 
\multicolumn{1}{|l}{0.0647} & \multicolumn{1}{|l}{0.7492} & 
\multicolumn{1}{|l}{0.6008} & \multicolumn{1}{|l}{0.5986} & 
\multicolumn{1}{|l|}{0.4265} \\ 
0.7 & 0.0502 & \multicolumn{1}{|l}{0.0495} & \multicolumn{1}{|l}{0.0494} & 
\multicolumn{1}{|l}{0.0613} & \multicolumn{1}{|l}{0.7348} & 
\multicolumn{1}{|l}{0.5932} & \multicolumn{1}{|l}{0.5919} & 
\multicolumn{1}{|l|}{0.4259} \\ \hline
\multicolumn{9}{|l|}{\small Table 1. Simulated sizes and powers for
different contamination proportions and different tuning} \\ \hline
\multicolumn{9}{|l|}{{\small parameters }$\tau ,\beta =0,0.1,...,0.7${\small %
\ for the Rao-type tests }$R_{\boldsymbol{\tau }}\left( \theta _{0}\right) $%
{\small \ and }$S_{20}^{\beta }(\theta _{0})${\small \ for }$n=20.$} \\ 
\hline
\end{tabular}%
\end{equation*}

\begin{equation*}
\begin{tabular}{|l||llllllll|}
\hline
$\tau $ & $\widehat{\alpha }_{40}\left( 0\right) $ & \multicolumn{1}{|l}{$%
\widehat{\alpha }_{40}\left( 0.05\right) $} & \multicolumn{1}{|l}{$\widehat{%
\alpha }_{40}\left( 0.10\right) $} & \multicolumn{1}{|l}{$\widehat{\alpha }%
_{40}\left( 0.20\right) $} & \multicolumn{1}{|l}{$\widehat{\pi }_{40}\left(
0\right) $} & \multicolumn{1}{|l}{$\widehat{\pi }_{40}\left( 0.1\right) $} & 
\multicolumn{1}{|l}{$\widehat{\pi }_{40}\left( 0.15\right) $} & 
\multicolumn{1}{|l|}{$\widehat{\pi }_{40}\left( 0.20\right) $} \\ \hline
0.0 & 0.3014 & \multicolumn{1}{|l}{0.3588} & \multicolumn{1}{|l}{0.4407} & 
\multicolumn{1}{|l}{0.5919} & \multicolumn{1}{|l}{0.9948} & 
\multicolumn{1}{|l}{0.8064} & \multicolumn{1}{|l}{0.7591} & 
\multicolumn{1}{|l|}{0.5957} \\ 
0.1 & 0.2393 & \multicolumn{1}{|l}{0.1934} & \multicolumn{1}{|l}{0.1757} & 
\multicolumn{1}{|l}{0.2032} & \multicolumn{1}{|l}{0.9991} & 
\multicolumn{1}{|l}{0.9540} & \multicolumn{1}{|l}{0.9229} & 
\multicolumn{1}{|l|}{0.7712} \\ 
0.2 & 0.4257 & \multicolumn{1}{|l}{0.2559} & \multicolumn{1}{|l}{0.1970} & 
\multicolumn{1}{|l}{0.1317} & \multicolumn{1}{|l}{0.9995} & 
\multicolumn{1}{|l}{0.9916} & \multicolumn{1}{|l}{0.9846} & 
\multicolumn{1}{|l|}{0.9204} \\ 
0.3 & 0.4257 & \multicolumn{1}{|l}{0.3485} & \multicolumn{1}{|l}{0.2782} & 
\multicolumn{1}{|l}{0.1753} & \multicolumn{1}{|l}{0.9997} & 
\multicolumn{1}{|l}{0.9997} & \multicolumn{1}{|l}{0.9953} & 
\multicolumn{1}{|l|}{0.9694} \\ 
0.4 & 0.5021 & \multicolumn{1}{|l}{0.4294} & \multicolumn{1}{|l}{0.3572} & 
\multicolumn{1}{|l}{0.2388} & \multicolumn{1}{|l}{0.9999} & 
\multicolumn{1}{|l}{0.9989} & \multicolumn{1}{|l}{0.9978} & 
\multicolumn{1}{|l|}{0.9851} \\ 
0.5 & 0.5642 & \multicolumn{1}{|l}{0.4920} & \multicolumn{1}{|l}{0.4253} & 
\multicolumn{1}{|l}{0.2993} & \multicolumn{1}{|l}{0.9999} & 
\multicolumn{1}{|l}{0.9992} & \multicolumn{1}{|l}{0.9986} & 
\multicolumn{1}{|l|}{0.9908} \\ 
0.6 & 0.6084 & \multicolumn{1}{|l}{0.5415} & \multicolumn{1}{|l}{0.4742} & 
\multicolumn{1}{|l}{0.3491} & \multicolumn{1}{|l}{1.0000} & 
\multicolumn{1}{|l}{0.9992} & \multicolumn{1}{|l}{0.9994} & 
\multicolumn{1}{|l|}{0.9935} \\ 
0.7 & 0.6416 & \multicolumn{1}{|l}{0.5755} & \multicolumn{1}{|l}{0.5081} & 
\multicolumn{1}{|l}{0.3831} & \multicolumn{1}{|l}{1.0000} & 
\multicolumn{1}{|l}{0.9994} & \multicolumn{1}{|l}{0.9994} & 
\multicolumn{1}{|l|}{0.9948} \\ \hline\hline
$\beta $ &  &  &  &  &  &  &  &  \\ \hline
0.0 & 0.0467 & \multicolumn{1}{|l}{0.0758} & \multicolumn{1}{|l}{0.1309} & 
\multicolumn{1}{|l}{0.2728} & \multicolumn{1}{|l}{0.9838} & 
\multicolumn{1}{|l}{0.8093} & \multicolumn{1}{|l}{0.7483} & 0.4905 \\ 
0.1 & 0.0469 & \multicolumn{1}{|l}{0.0623} & \multicolumn{1}{|l}{0.0959} & 
\multicolumn{1}{|l}{0.1987} & \multicolumn{1}{|l}{0.9870} & 
\multicolumn{1}{|l}{0.8770} & \multicolumn{1}{|l}{0.8317} & 
\multicolumn{1}{|l|}{0.6072} \\ 
0.2 & 0.0464 & \multicolumn{1}{|l}{0.0554} & \multicolumn{1}{|l}{0.0800} & 
\multicolumn{1}{|l}{0.1526} & \multicolumn{1}{|l}{0.9862} & 
\multicolumn{1}{|l}{0.9010} & \multicolumn{1}{|l}{0.8687} & 
\multicolumn{1}{|l|}{0.6778} \\ 
0.3 & 0.0481 & \multicolumn{1}{|l}{0.0529} & \multicolumn{1}{|l}{0.0704} & 
\multicolumn{1}{|l}{0.1220} & \multicolumn{1}{|l}{0.9846} & 
\multicolumn{1}{|l}{0.9084} & \multicolumn{1}{|l}{0.8804} & 
\multicolumn{1}{|l|}{0.7169} \\ 
0.4 & 0.0483 & \multicolumn{1}{|l}{0.0518} & \multicolumn{1}{|l}{0.0649} & 
\multicolumn{1}{|l}{0.1036} & \multicolumn{1}{|l}{0.9808} & 
\multicolumn{1}{|l}{0.9059} & \multicolumn{1}{|l}{0.8809} & 
\multicolumn{1}{|l|}{0.7316} \\ 
0.5 & 0.0500 & \multicolumn{1}{|l}{0.0519} & \multicolumn{1}{|l}{0.0618} & 
\multicolumn{1}{|l}{0.0929} & \multicolumn{1}{|l}{0.9756} & 
\multicolumn{1}{|l}{0.9008} & \multicolumn{1}{|l}{0.8742} & 
\multicolumn{1}{|l|}{0.7338} \\ 
0.6 & 0.0500 & \multicolumn{1}{|l}{0.0501} & \multicolumn{1}{|l}{0.0577} & 
\multicolumn{1}{|l}{0.0858} & \multicolumn{1}{|l}{0.9689} & 
\multicolumn{1}{|l}{0.8914} & \multicolumn{1}{|l}{0.8662} & 
\multicolumn{1}{|l|}{0.7317} \\ 
0.7 & 0.0504 & \multicolumn{1}{|l}{0.0519} & \multicolumn{1}{|l}{0.0562} & 
\multicolumn{1}{|l}{0.0801} & \multicolumn{1}{|l}{0.9634} & 
\multicolumn{1}{|l}{0.8813} & \multicolumn{1}{|l}{0.8562} & 
\multicolumn{1}{|l|}{0.7258} \\ \hline
\multicolumn{9}{|l|}{{\small Table 2.} {\small Simulated sizes and powers
for different contamination proportions and different tuning }} \\ \hline
\multicolumn{9}{|l|}{{\small parameters }$\tau ,\beta =0,0.1,...,0.7${\small %
\ for the Rao-type tests }$R_{\boldsymbol{\tau }}\left( \theta _{0}\right) $%
{\small \ and }$S_{20}^{\beta }(\theta _{0})${\small \ for }$n=20.$} \\ 
\hline
\end{tabular}%
\end{equation*}

Regarding the power of the tests, uncontaminated scenarios there are values
at least so good than the corresponding to $\tau =0$ and $\beta =0$ and for
contaminated data the power corresponding to $\tau >0$ and $\beta >0$ are
higher.

The loss of efficiency caused by the Guassian assumption can be measured by
the discrepancy of the estimated levels and powers between the family of
Rao-type tests based on the restricted MDPDGE and the MDPDE. As expected,
empirical levels of the test statistics based on the MDPDGE are quite higher
than the corresponding levels of the test based in the MDPDE. However, the
test statistics based on the parametric model, $S_{n}^{\beta }(\theta _{0}),$
is quite conservative and so the corresponding powers are higher than those
of the proposed tests, $R_{\boldsymbol{\tau }}\left( \theta _{0}\right) .$
Based on the presented results, it seems that the proposed Rao-type tests, $%
R_{\boldsymbol{\tau }}\left( \theta _{0}\right) ,$ performs reasonably well
and offers an appealing alternative for situations where the probability
density function of the true model is unknown or it is very complicated to
work with it.

\section{Conclusions}

In this paper we have considered inferential techniques for situations where we do not know the parametric form of the density but the only available information is the mean vector and the variance-covariance matrix, expressed in terms of the a parameter vector $\boldsymbol{\theta}.$ To deal with this problem, Zhang (2019)
proposed a procedure based on the Gaussian distribution. However, the therein proposed estimator lacks of robustness and thus the procedure was
extended to robust estimators based on the DPD.
We have focused on the case in which additional constraints must be imposed to the estimated parameters, thus leading to the RMDPGE. We have derived the asymptotic distribution of the proposed estimator and we have studied its robustness properties in terms of the corresponding IF. Further, we have developed robust Rao-type test statistics under  null hypothesis, which requires the restricted version of the estimator.
% Based on this, we have developed the Rao-type tests based on RMDPGE for simple nullhypotheses and composite null hypothesis. 
Finally, a simulation study have been carried out to examine the performance of the proposed test statistics. From the results, we empirically showed that the Rao-type tests considered have a good performance in terms of efficiency and enjoys more robustness than the Zhang (2019) approach based on Gaussian estimators.

\section*{Acknowledgements}

This research is supported by the Spanish Grant:\ PID2021-124933NB-I00. The
authors are members of the Interdisciplinary Mathematics Institute (IMI).

\section{Appendix}

In the different Sections of the Appendix will be important the following
results:

\begin{enumerate}
\item Results in relation to the derivatives

\begin{enumerate}
\item $\dfrac{\partial \boldsymbol{\Sigma }\left( \boldsymbol{\theta }%
\right) }{\partial \theta _{i}}=\left \vert \boldsymbol{\Sigma }\left( 
\boldsymbol{\theta }\right) \right \vert $ $trace\left( \boldsymbol{\Sigma }%
\left( \boldsymbol{\theta }\right) ^{-1}\dfrac{\partial \boldsymbol{\Sigma }%
\left( \boldsymbol{\theta }\right) }{\partial \theta _{i}}\right) .$

\item $\frac{\partial trace\left( \boldsymbol{\Sigma }\left( \boldsymbol{%
\theta }\right) \right) }{\partial \theta _{i}}=trace\left( \dfrac{\partial 
\boldsymbol{\Sigma }\left( \boldsymbol{\theta }\right) }{\partial \theta _{i}%
}\right) .$

\item $\dfrac{\partial \boldsymbol{\Sigma }\left( \boldsymbol{\theta }%
\right) }{\partial \theta _{i}}^{-1}=-\boldsymbol{\Sigma }\left( \boldsymbol{%
\theta }\right) ^{-1}\dfrac{\partial \boldsymbol{\Sigma }\left( \boldsymbol{%
\theta }\right) }{\partial \theta _{i}}\boldsymbol{\Sigma }\left( 
\boldsymbol{\theta }\right) ^{-1}.$
\end{enumerate}

\item Let $\boldsymbol{Y}$ be a normal population with vector mean $%
\boldsymbol{\mu }$ and variance-covariance $\boldsymbol{\Sigma }$ we have,

\begin{enumerate}
\item $E\left[ \left( \boldsymbol{Y}-\boldsymbol{\mu }\right) ^{T}%
\boldsymbol{A}\left( \boldsymbol{Y}-\boldsymbol{\mu }\right) \right]
=Trace\left( \boldsymbol{A\Sigma }\right) .$

\item $E\left[ \left( \boldsymbol{Y}-\boldsymbol{\mu }\right) ^{T}%
\boldsymbol{A}\left( \boldsymbol{Y}-\boldsymbol{\mu }\right) \left( 
\boldsymbol{Y}-\boldsymbol{\mu }\right) ^{T}\boldsymbol{B}\left( \boldsymbol{%
Y}-\boldsymbol{\mu }\right) \right] =Trace\left( \boldsymbol{A\Sigma }\left( 
\boldsymbol{B+B}^{T}\right) \boldsymbol{\Sigma }\right) +Trace\left( 
\boldsymbol{A\Sigma }\right) Trace\left( \boldsymbol{B\Sigma }\right) .$

\item $E\left[ \left( \boldsymbol{Y}-\boldsymbol{\mu }\right) ^{T}%
\boldsymbol{A}\left( \boldsymbol{Y}-\boldsymbol{\mu }\right) \left( 
\boldsymbol{Y}-\boldsymbol{\mu }\right) \right] =\boldsymbol{0}.$
\end{enumerate}
\end{enumerate}

For more details about these results see for instance Harville (1997).

\subsection{Appendix A (Proof of Proposition \protect\ref{Proposition1})}

The expresion of $H_{n}^{\tau }(\boldsymbol{\theta }),$ introduced in (\ref%
{103}), is given by, 
\begin{equation*}
H_{n}^{\tau }(\boldsymbol{\theta })=a\left \vert \boldsymbol{\Sigma }\left( 
\boldsymbol{\theta }\right) \right \vert ^{-\tau /2}\left( \frac{1}{n}\tsum
\limits_{i=1}^{n}\exp \left \{ -\frac{\tau }{2}\left( \boldsymbol{y}_{i}-%
\boldsymbol{\mu }\left( \boldsymbol{\theta }\right) \right) ^{T}\boldsymbol{%
\Sigma }\left( \boldsymbol{\theta }\right) ^{-1}\left( \boldsymbol{y}_{i}-%
\boldsymbol{\mu }\left( \boldsymbol{\theta }\right) \right) \right \}
-b\right) -\frac{1}{\tau }
\end{equation*}%
and we consider the $d$-dimensional random vector $\boldsymbol{Y}_{\tau }(%
\boldsymbol{\theta })$ defined in (\ref{117a}). Applying Central Limit
Theorem we have, 
\begin{equation*}
\sqrt{n}\frac{\partial }{\partial \boldsymbol{\theta }}H_{n}^{\tau }(%
\boldsymbol{\theta })=\frac{1}{\sqrt{n}}\tsum \limits_{i=1}^{n}\boldsymbol{%
\Psi }_{\boldsymbol{\tau }}(\boldsymbol{y}_{i};\boldsymbol{\theta })\underset%
{n\longrightarrow \infty }{\overset{\mathcal{L}}{\longrightarrow }}\mathcal{N%
}(\boldsymbol{0}_{m},\boldsymbol{S}_{\boldsymbol{\tau }}(\boldsymbol{\theta }%
_{0})\boldsymbol{)}
\end{equation*}%
with 
\begin{equation*}
\boldsymbol{S}_{\boldsymbol{\tau }}(\boldsymbol{\theta }_{0})\boldsymbol{=}%
Cov\left[ \boldsymbol{Y}_{\tau }(\boldsymbol{\theta })\right] =E\left[ 
\boldsymbol{Y}_{\tau }(\boldsymbol{\theta })^{T}\boldsymbol{Y}_{\tau }(%
\boldsymbol{\theta })\right]
\end{equation*}%
because 
\begin{equation*}
E\left[ \boldsymbol{Y}_{\tau }(\boldsymbol{\theta })\right] =\boldsymbol{0}%
_{d}.
\end{equation*}%
To see that $E\left[ \boldsymbol{Y}_{\tau }(\boldsymbol{\theta })\right] =%
\boldsymbol{0}_{d}$ consider%
\begin{eqnarray*}
E\left[ \boldsymbol{Y}_{\tau }(\boldsymbol{\theta })\right] &=&a\text{ }%
\frac{\tau }{2}\left \vert \boldsymbol{\Sigma }\left( \boldsymbol{\theta }%
\right) \right \vert ^{-\tau /2}E\left[ -trace\left( \boldsymbol{\Sigma }%
\left( \boldsymbol{\theta }\right) ^{-1}\dfrac{\partial \boldsymbol{\Sigma }%
\left( \boldsymbol{\theta }\right) }{\partial \boldsymbol{\theta }}\right)
\right. \\
&&\left. \left. \exp \left \{ -\frac{\tau }{2}\left( \boldsymbol{Y}-%
\boldsymbol{\mu }\left( \boldsymbol{\theta }\right) \right) ^{T}\boldsymbol{%
\Sigma }\left( \boldsymbol{\theta }\right) ^{-1}\left( \boldsymbol{Y}-%
\boldsymbol{\mu }\left( \boldsymbol{\theta }\right) \right) \right \} +b%
\text{ }trace\left( \boldsymbol{\Sigma }\left( \boldsymbol{\theta }\right)
^{-1}\dfrac{\partial \boldsymbol{\Sigma }\left( \boldsymbol{\theta }\right) 
}{\partial \boldsymbol{\theta }}\right) \right. \right. \\
&&\left. \left. +\text{ }\exp \left \{ -\frac{\tau }{2}\left( \boldsymbol{Y}-%
\boldsymbol{\mu }\left( \boldsymbol{\theta }\right) \right) ^{T}\boldsymbol{%
\Sigma }\left( \boldsymbol{\theta }\right) ^{-1}\left( \boldsymbol{Y}-%
\boldsymbol{\mu }\left( \boldsymbol{\theta }\right) \right) \right \}
\right. \right. \\
&&\left. \left[ -2\left( \frac{\partial \boldsymbol{\mu }\left( \boldsymbol{%
\theta }\right) }{\partial \boldsymbol{\theta }}\right) ^{T}\boldsymbol{%
\Sigma }\left( \boldsymbol{\theta }\right) ^{-1}\left( \boldsymbol{Y}-%
\boldsymbol{\mu }\left( \boldsymbol{\theta }\right) \right) \right. \right.
\\
&&\left. \left. +\left( \boldsymbol{Y}-\boldsymbol{\mu }\left( \boldsymbol{%
\theta }\right) \right) ^{T}\left( \boldsymbol{\Sigma }\left( \boldsymbol{%
\theta }\right) ^{-1}\dfrac{\partial \boldsymbol{\Sigma }\left( \boldsymbol{%
\theta }\right) }{\partial \boldsymbol{\theta }}\boldsymbol{\Sigma }\left( 
\boldsymbol{\theta }\right) ^{-1}\right) \left( \boldsymbol{Y}-\boldsymbol{%
\mu }\left( \boldsymbol{\theta }\right) \right) \right] \right] \\
&=&a\text{ }\frac{\tau }{2}\left \vert \boldsymbol{\Sigma }\left( 
\boldsymbol{\theta }\right) \right \vert ^{-\tau /2}\left \{ -trace\left( 
\boldsymbol{\Sigma }\left( \boldsymbol{\theta }\right) ^{-1}\dfrac{\partial 
\boldsymbol{\Sigma }\left( \boldsymbol{\theta }\right) }{\partial 
\boldsymbol{\theta }}\right) \frac{1}{\left( \tau +1\right) ^{m/2}}\right. \\
&&\left. +\frac{\tau }{\left( \tau +1\right) ^{\frac{m}{2}+1}}trace\left( 
\boldsymbol{\Sigma }\left( \boldsymbol{\theta }\right) ^{-1}\dfrac{\partial 
\boldsymbol{\Sigma }\left( \boldsymbol{\theta }\right) }{\partial 
\boldsymbol{\theta }}\right) \right. \\
&&\left. +\frac{1}{\left( \tau +1\right) ^{\frac{m}{2}+1}}trace\left( 
\boldsymbol{\Sigma }\left( \boldsymbol{\theta }\right) ^{-1}\dfrac{\partial 
\boldsymbol{\Sigma }\left( \boldsymbol{\theta }\right) }{\partial 
\boldsymbol{\theta }}\right) \right \} \\
&=&\boldsymbol{0}_{d}.
\end{eqnarray*}%
We can observe that $\boldsymbol{Y}_{\tau }(\boldsymbol{\theta })$ is a $d$%
-dimensional vector whose j-th component is%
\begin{eqnarray*}
Y_{\tau }^{j}(\boldsymbol{\theta }) &=&a\text{ }\frac{\tau }{2}\left \vert 
\boldsymbol{\Sigma }\left( \boldsymbol{\theta }\right) \right \vert ^{-\tau
/2}\left \{ -trace\left( \boldsymbol{\Sigma }\left( \boldsymbol{\theta }%
\right) ^{-1}\dfrac{\partial \boldsymbol{\Sigma }\left( \boldsymbol{\theta }%
\right) }{\partial \theta _{j}}\right) \right. \\
&&\left. \exp \left \{ -\frac{\tau }{2}\left( \boldsymbol{y}-\boldsymbol{\mu 
}\left( \boldsymbol{\theta }\right) \right) ^{T}\boldsymbol{\Sigma }\left( 
\boldsymbol{\theta }\right) ^{-1}\left( \boldsymbol{y}-\boldsymbol{\mu }%
\left( \boldsymbol{\theta }\right) \right) \right \} +b\text{ }trace\left( 
\boldsymbol{\Sigma }\left( \boldsymbol{\theta }\right) ^{-1}\dfrac{\partial 
\boldsymbol{\Sigma }\left( \boldsymbol{\theta }\right) }{\partial \theta _{j}%
}\right) \right. \\
&&\left. +\text{ }\exp \left \{ -\frac{\tau }{2}\left( \boldsymbol{y}-%
\boldsymbol{\mu }\left( \boldsymbol{\theta }\right) \right) ^{T}\boldsymbol{%
\Sigma }\left( \boldsymbol{\theta }\right) ^{-1}\left( \boldsymbol{y}-%
\boldsymbol{\mu }\left( \boldsymbol{\theta }\right) \right) \right \} \right.
\\
&&\left[ -2\left( \frac{\partial \boldsymbol{\mu }\left( \boldsymbol{\theta }%
\right) }{\partial \theta _{j}}\right) ^{T}\boldsymbol{\Sigma }\left( 
\boldsymbol{\theta }\right) ^{-1}\left( \boldsymbol{y}-\boldsymbol{\mu }%
\left( \boldsymbol{\theta }\right) \right) \right. \\
&&\left. \left. +\left( \boldsymbol{y}-\boldsymbol{\mu }\left( \boldsymbol{%
\theta }\right) \right) ^{T}\left( \boldsymbol{\Sigma }\left( \boldsymbol{%
\theta }\right) ^{-1}\dfrac{\partial \boldsymbol{\Sigma }\left( \boldsymbol{%
\theta }\right) }{\partial \theta _{j}}\boldsymbol{\Sigma }\left( 
\boldsymbol{\theta }\right) ^{-1}\right) \left( \boldsymbol{y}-\boldsymbol{%
\mu }\left( \boldsymbol{\theta }\right) \right) \right] \right \} ,\text{ }%
j=1,...,d.
\end{eqnarray*}%
Therefore \ the element $(i,j)$ of the matrix $\boldsymbol{S}_{\boldsymbol{%
\tau }}(\boldsymbol{\theta }_{0})$ is given by 
\begin{equation*}
E\left[ Y_{\tau }^{i}(\boldsymbol{\theta })Y_{\tau }^{j}(\boldsymbol{\theta }%
)\right] .
\end{equation*}%
We are going to get $Y_{\tau }^{i}(\boldsymbol{\theta })Y_{\tau }^{j}(%
\boldsymbol{\theta }).$ We have,%
\begin{eqnarray*}
Y_{\tau }^{i}(\boldsymbol{\theta })Y_{\tau }^{j}(\boldsymbol{\theta })
&=&\left \{ a\text{ }\frac{\tau }{2}\left \vert \boldsymbol{\Sigma }\left( 
\boldsymbol{\theta }\right) \right \vert ^{-\tau /2}\left[ -trace\left( 
\boldsymbol{\Sigma }\left( \boldsymbol{\theta }\right) ^{-1}\dfrac{\partial 
\boldsymbol{\Sigma }\left( \boldsymbol{\theta }\right) }{\partial \theta _{i}%
}\right) \right. \right. \\
&&\left. \left. \exp \left \{ -\frac{\tau }{2}\left( \boldsymbol{y}_{i}-%
\boldsymbol{\mu }\left( \boldsymbol{\theta }\right) \right) ^{T}\boldsymbol{%
\Sigma }\left( \boldsymbol{\theta }\right) ^{-1}\left( \boldsymbol{y}_{i}-%
\boldsymbol{\mu }\left( \boldsymbol{\theta }\right) \right) \right \} +b%
\text{ }trace\left( \boldsymbol{\Sigma }\left( \boldsymbol{\theta }\right)
^{-1}\dfrac{\partial \boldsymbol{\Sigma }\left( \boldsymbol{\theta }\right) 
}{\partial \theta _{i}}\right) \right. \right. \\
&&\left. \left. +\text{ }\exp \left \{ -\frac{\tau }{2}\left( \boldsymbol{y}%
_{i}-\boldsymbol{\mu }\left( \boldsymbol{\theta }\right) \right) ^{T}%
\boldsymbol{\Sigma }\left( \boldsymbol{\theta }\right) ^{-1}\left( 
\boldsymbol{y}_{i}-\boldsymbol{\mu }\left( \boldsymbol{\theta }\right)
\right) \right \} \right. \right. \\
&&\left. \left[ 2\left( \frac{\partial \boldsymbol{\mu }\left( \boldsymbol{%
\theta }\right) }{\partial \theta _{i}}\right) ^{T}\boldsymbol{\Sigma }%
\left( \boldsymbol{\theta }\right) ^{-1}\left( \boldsymbol{y}_{i}-%
\boldsymbol{\mu }\left( \boldsymbol{\theta }\right) \right) \right. \right.
\\
&&\left. \left. \left. +\left( \boldsymbol{y}_{i}-\boldsymbol{\mu }\left( 
\boldsymbol{\theta }\right) \right) ^{T}\left( \boldsymbol{\Sigma }\left( 
\boldsymbol{\theta }\right) ^{-1}\dfrac{\partial \boldsymbol{\Sigma }\left( 
\boldsymbol{\theta }\right) }{\partial \theta _{j}}\boldsymbol{\Sigma }%
\left( \boldsymbol{\theta }\right) ^{-1}\right) \left( \boldsymbol{y}_{i}-%
\boldsymbol{\mu }\left( \boldsymbol{\theta }\right) \right) \right] \right]
\right \} \\
&&.\left \{ a\text{ }\frac{\tau }{2}\left \vert \boldsymbol{\Sigma }\left( 
\boldsymbol{\theta }\right) \right \vert ^{-\tau /2}\left[ -trace\left( 
\boldsymbol{\Sigma }\left( \boldsymbol{\theta }\right) ^{-1}\dfrac{\partial 
\boldsymbol{\Sigma }\left( \boldsymbol{\theta }\right) }{\partial \theta _{j}%
}\right) \right. \right. \\
&&\left. \left. \exp \left \{ -\frac{\tau }{2}\left( \boldsymbol{y}-%
\boldsymbol{\mu }\left( \boldsymbol{\theta }\right) \right) ^{T}\boldsymbol{%
\Sigma }\left( \boldsymbol{\theta }\right) ^{-1}\left( \boldsymbol{y}-%
\boldsymbol{\mu }\left( \boldsymbol{\theta }\right) \right) \right \} +b%
\text{ }trace\left( \boldsymbol{\Sigma }\left( \boldsymbol{\theta }\right)
^{-1}\dfrac{\partial \boldsymbol{\Sigma }\left( \boldsymbol{\theta }\right) 
}{\partial \theta _{j}}\right) \right. \right. \\
&&\left. \left. +\text{ }\exp \left \{ -\frac{\tau }{2}\left( \boldsymbol{y}-%
\boldsymbol{\mu }\left( \boldsymbol{\theta }\right) \right) ^{T}\boldsymbol{%
\Sigma }\left( \boldsymbol{\theta }\right) ^{-1}\left( \boldsymbol{y}-%
\boldsymbol{\mu }\left( \boldsymbol{\theta }\right) \right) \right \}
\right. \right. \\
&&\left. \left[ 2\left( \frac{\partial \boldsymbol{\mu }\left( \boldsymbol{%
\theta }\right) }{\partial \theta _{j}}\right) ^{T}\boldsymbol{\Sigma }%
\left( \boldsymbol{\theta }\right) ^{-1}\left( \boldsymbol{y}-\boldsymbol{%
\mu }\left( \boldsymbol{\theta }\right) \right) \right. \right. \\
&&\left. \left. \left. +\left( \boldsymbol{y}-\boldsymbol{\mu }\left( 
\boldsymbol{\theta }\right) \right) ^{T}\left( \boldsymbol{\Sigma }\left( 
\boldsymbol{\theta }\right) ^{-1}\dfrac{\partial \boldsymbol{\Sigma }\left( 
\boldsymbol{\theta }\right) }{\partial \theta _{j}}\boldsymbol{\Sigma }%
\left( \boldsymbol{\theta }\right) ^{-1}\right) \left( \boldsymbol{y}-%
\boldsymbol{\mu }\left( \boldsymbol{\theta }\right) \right) \right] \right]
\right \} .
\end{eqnarray*}%
Therefore, $Y_{\tau }^{i}(\boldsymbol{\theta })Y_{\tau }^{j}(\boldsymbol{%
\theta })$ is given by 
\begin{eqnarray*}
&&a^{2}\text{ }\left( \frac{\tau }{2}\right) ^{2}\left \vert \boldsymbol{%
\Sigma }\left( \boldsymbol{\theta }\right) \right \vert ^{-\tau }\left \{
trace\left( \boldsymbol{\Sigma }\left( \boldsymbol{\theta }\right) ^{-1}%
\dfrac{\partial \boldsymbol{\Sigma }\left( \boldsymbol{\theta }\right) }{%
\partial \theta _{i}}\right) trace\left( \boldsymbol{\Sigma }\left( 
\boldsymbol{\theta }\right) ^{-1}\dfrac{\partial \boldsymbol{\Sigma }\left( 
\boldsymbol{\theta }\right) }{\partial \theta _{j}}\right) \right. \\
&&\left. \exp \left \{ -\frac{2\tau }{2}\left( \boldsymbol{y}_{i}-%
\boldsymbol{\mu }\left( \boldsymbol{\theta }\right) \right) ^{T}\boldsymbol{%
\Sigma }\left( \boldsymbol{\theta }\right) ^{-1}\left( \boldsymbol{y}_{i}-%
\boldsymbol{\mu }\left( \boldsymbol{\theta }\right) \right) \right \} \right.
\\
&&\left. -trace\left( \boldsymbol{\Sigma }\left( \boldsymbol{\theta }\right)
^{-1}\dfrac{\partial \boldsymbol{\Sigma }\left( \boldsymbol{\theta }\right) 
}{\partial \theta _{i}}\right) trace\left( \boldsymbol{\Sigma }\left( 
\boldsymbol{\theta }\right) ^{-1}\dfrac{\partial \boldsymbol{\Sigma }\left( 
\boldsymbol{\theta }\right) }{\partial \theta _{j}}\right) b\right. \\
&&\left. \exp \left \{ -\frac{2\tau }{2}\left( \boldsymbol{y}_{i}-%
\boldsymbol{\mu }\left( \boldsymbol{\theta }\right) \right) ^{T}\boldsymbol{%
\Sigma }\left( \boldsymbol{\theta }\right) ^{-1}\left( \boldsymbol{y}_{i}-%
\boldsymbol{\mu }\left( \boldsymbol{\theta }\right) \right) \right \} \right.
\\
&&\left. -trace\left( \boldsymbol{\Sigma }\left( \boldsymbol{\theta }\right)
^{-1}\dfrac{\partial \boldsymbol{\Sigma }\left( \boldsymbol{\theta }\right) 
}{\partial \theta _{i}}\right) \exp \left \{ -\frac{2\tau }{2}\left( 
\boldsymbol{y}_{i}-\boldsymbol{\mu }\left( \boldsymbol{\theta }\right)
\right) ^{T}\boldsymbol{\Sigma }\left( \boldsymbol{\theta }\right)
^{-1}\left( \boldsymbol{y}_{i}-\boldsymbol{\mu }\left( \boldsymbol{\theta }%
\right) \right) \right \} \right. \\
&&\left. \left[ 2\left( \frac{\partial \boldsymbol{\mu }\left( \boldsymbol{%
\theta }\right) }{\partial \theta _{i}}\right) ^{T}\boldsymbol{\Sigma }%
\left( \boldsymbol{\theta }\right) ^{-1}\left( \boldsymbol{y}_{i}-%
\boldsymbol{\mu }\left( \boldsymbol{\theta }\right) \right) +\left( 
\boldsymbol{y}_{i}-\boldsymbol{\mu }\left( \boldsymbol{\theta }\right)
\right) ^{T}\left( \boldsymbol{\Sigma }\left( \boldsymbol{\theta }\right)
^{-1}\dfrac{\partial \boldsymbol{\Sigma }\left( \boldsymbol{\theta }\right) 
}{\partial \theta _{j}}\boldsymbol{\Sigma }\left( \boldsymbol{\theta }%
\right) ^{-1}\right) \left( \boldsymbol{y}_{i}-\boldsymbol{\mu }\left( 
\boldsymbol{\theta }\right) \right) \right] \right. \\
&&\left. -b\text{ }trace\left( \boldsymbol{\Sigma }\left( \boldsymbol{\theta 
}\right) ^{-1}\dfrac{\partial \boldsymbol{\Sigma }\left( \boldsymbol{\theta }%
\right) }{\partial \theta _{i}}\right) trace\left( \boldsymbol{\Sigma }%
\left( \boldsymbol{\theta }\right) ^{-1}\dfrac{\partial \boldsymbol{\Sigma }%
\left( \boldsymbol{\theta }\right) }{\partial \theta _{j}}\right) \exp \left
\{ -\frac{\tau }{2}\left( \boldsymbol{y}-\boldsymbol{\mu }\left( \boldsymbol{%
\theta }\right) \right) ^{T}\boldsymbol{\Sigma }\left( \boldsymbol{\theta }%
\right) ^{-1}\left( \boldsymbol{y}-\boldsymbol{\mu }\left( \boldsymbol{%
\theta }\right) \right) \right \} \right. \\
&&\left. +b^{2}trace\left( \boldsymbol{\Sigma }\left( \boldsymbol{\theta }%
\right) ^{-1}\dfrac{\partial \boldsymbol{\Sigma }\left( \boldsymbol{\theta }%
\right) }{\partial \theta _{i}}\right) trace\left( \boldsymbol{\Sigma }%
\left( \boldsymbol{\theta }\right) ^{-1}\dfrac{\partial \boldsymbol{\Sigma }%
\left( \boldsymbol{\theta }\right) }{\partial \theta _{j}}\right) \right. \\
&&\left. +b\text{ }trace\left( \boldsymbol{\Sigma }\left( \boldsymbol{\theta 
}\right) ^{-1}\dfrac{\partial \boldsymbol{\Sigma }\left( \boldsymbol{\theta }%
\right) }{\partial \theta _{i}}\right) \exp \left \{ -\frac{\tau }{2}\left( 
\boldsymbol{y}-\boldsymbol{\mu }\left( \boldsymbol{\theta }\right) \right)
^{T}\boldsymbol{\Sigma }\left( \boldsymbol{\theta }\right) ^{-1}\left( 
\boldsymbol{y}-\boldsymbol{\mu }\left( \boldsymbol{\theta }\right) \right)
\right \} \right. \\
&&\left. \left[ 2\left( \frac{\partial \boldsymbol{\mu }\left( \boldsymbol{%
\theta }\right) }{\partial \theta _{j}}\right) ^{T}\boldsymbol{\Sigma }%
\left( \boldsymbol{\theta }\right) ^{-1}\left( \boldsymbol{y}-\boldsymbol{%
\mu }\left( \boldsymbol{\theta }\right) \right) +\left( \boldsymbol{y}-%
\boldsymbol{\mu }\left( \boldsymbol{\theta }\right) \right) ^{T}\left( 
\boldsymbol{\Sigma }\left( \boldsymbol{\theta }\right) ^{-1}\dfrac{\partial 
\boldsymbol{\Sigma }\left( \boldsymbol{\theta }\right) }{\partial \theta _{j}%
}\boldsymbol{\Sigma }\left( \boldsymbol{\theta }\right) ^{-1}\right) \left( 
\boldsymbol{y}-\boldsymbol{\mu }\left( \boldsymbol{\theta }\right) \right) %
\right] \right. \\
&&\left. -trace\left( \boldsymbol{\Sigma }\left( \boldsymbol{\theta }\right)
^{-1}\dfrac{\partial \boldsymbol{\Sigma }\left( \boldsymbol{\theta }\right) 
}{\partial \theta _{i}}\right) \exp \left \{ -\frac{2\tau }{2}\left( 
\boldsymbol{y}_{i}-\boldsymbol{\mu }\left( \boldsymbol{\theta }\right)
\right) ^{T}\boldsymbol{\Sigma }\left( \boldsymbol{\theta }\right)
^{-1}\left( \boldsymbol{y}_{i}-\boldsymbol{\mu }\left( \boldsymbol{\theta }%
\right) \right) \right \} \right. \\
&&\left. \left[ 2\left( \frac{\partial \boldsymbol{\mu }\left( \boldsymbol{%
\theta }\right) }{\partial \theta _{j}}\right) ^{T}\boldsymbol{\Sigma }%
\left( \boldsymbol{\theta }\right) ^{-1}\left( \boldsymbol{y}-\boldsymbol{%
\mu }\left( \boldsymbol{\theta }\right) \right) +\left( \boldsymbol{y}-%
\boldsymbol{\mu }\left( \boldsymbol{\theta }\right) \right) ^{T}\left( 
\boldsymbol{\Sigma }\left( \boldsymbol{\theta }\right) ^{-1}\dfrac{\partial 
\boldsymbol{\Sigma }\left( \boldsymbol{\theta }\right) }{\partial \theta _{j}%
}\boldsymbol{\Sigma }\left( \boldsymbol{\theta }\right) ^{-1}\right) \left( 
\boldsymbol{y}-\boldsymbol{\mu }\left( \boldsymbol{\theta }\right) \right) %
\right] \right. \\
&&\left. +b\text{ }trace\left( \boldsymbol{\Sigma }\left( \boldsymbol{\theta 
}\right) ^{-1}\dfrac{\partial \boldsymbol{\Sigma }\left( \boldsymbol{\theta }%
\right) }{\partial \theta _{i}}\right) \exp \left \{ -\frac{2\tau }{2}\left( 
\boldsymbol{y}_{i}-\boldsymbol{\mu }\left( \boldsymbol{\theta }\right)
\right) ^{T}\boldsymbol{\Sigma }\left( \boldsymbol{\theta }\right)
^{-1}\left( \boldsymbol{y}_{i}-\boldsymbol{\mu }\left( \boldsymbol{\theta }%
\right) \right) \right \} \right. \\
&&\left. \left[ 2\left( \frac{\partial \boldsymbol{\mu }\left( \boldsymbol{%
\theta }\right) }{\partial \theta _{j}}\right) ^{T}\boldsymbol{\Sigma }%
\left( \boldsymbol{\theta }\right) ^{-1}\left( \boldsymbol{y}-\boldsymbol{%
\mu }\left( \boldsymbol{\theta }\right) \right) +\left( \boldsymbol{y}-%
\boldsymbol{\mu }\left( \boldsymbol{\theta }\right) \right) ^{T}\left( 
\boldsymbol{\Sigma }\left( \boldsymbol{\theta }\right) ^{-1}\dfrac{\partial 
\boldsymbol{\Sigma }\left( \boldsymbol{\theta }\right) }{\partial \theta _{j}%
}\boldsymbol{\Sigma }\left( \boldsymbol{\theta }\right) ^{-1}\right) \left( 
\boldsymbol{y}-\boldsymbol{\mu }\left( \boldsymbol{\theta }\right) \right) %
\right] \right. \\
&&\left. +\exp \left \{ -\frac{2\tau }{2}\left( \boldsymbol{y}_{i}-%
\boldsymbol{\mu }\left( \boldsymbol{\theta }\right) \right) ^{T}\boldsymbol{%
\Sigma }\left( \boldsymbol{\theta }\right) ^{-1}\left( \boldsymbol{y}_{i}-%
\boldsymbol{\mu }\left( \boldsymbol{\theta }\right) \right) \right \} \right.
\\
&&\left. \left[ 2\left( \frac{\partial \boldsymbol{\mu }\left( \boldsymbol{%
\theta }\right) }{\partial \theta _{i}}\right) ^{T}\boldsymbol{\Sigma }%
\left( \boldsymbol{\theta }\right) ^{-1}\left( \boldsymbol{y}_{i}-%
\boldsymbol{\mu }\left( \boldsymbol{\theta }\right) \right) +\left( 
\boldsymbol{y}_{i}-\boldsymbol{\mu }\left( \boldsymbol{\theta }\right)
\right) ^{T}\left( \boldsymbol{\Sigma }\left( \boldsymbol{\theta }\right)
^{-1}\dfrac{\partial \boldsymbol{\Sigma }\left( \boldsymbol{\theta }\right) 
}{\partial \theta _{i}}\boldsymbol{\Sigma }\left( \boldsymbol{\theta }%
\right) ^{-1}\right) \left( \boldsymbol{y}_{i}-\boldsymbol{\mu }\left( 
\boldsymbol{\theta }\right) \right) \right] \right. \\
&&\left. \left[ 2\left( \frac{\partial \boldsymbol{\mu }\left( \boldsymbol{%
\theta }\right) }{\partial \theta _{j}}\right) ^{T}\boldsymbol{\Sigma }%
\left( \boldsymbol{\theta }\right) ^{-1}\left( \boldsymbol{y}-\boldsymbol{%
\mu }\left( \boldsymbol{\theta }\right) \right) +\left( \boldsymbol{y}-%
\boldsymbol{\mu }\left( \boldsymbol{\theta }\right) \right) ^{T}\left( 
\boldsymbol{\Sigma }\left( \boldsymbol{\theta }\right) ^{-1}\dfrac{\partial 
\boldsymbol{\Sigma }\left( \boldsymbol{\theta }\right) }{\partial \theta _{j}%
}\boldsymbol{\Sigma }\left( \boldsymbol{\theta }\right) ^{-1}\right) \left( 
\boldsymbol{y}-\boldsymbol{\mu }\left( \boldsymbol{\theta }\right) \right) %
\right] \right \} .
\end{eqnarray*}%
We can write $Y_{\tau }^{i}(\boldsymbol{\theta })Y_{\tau }^{j}(\boldsymbol{%
\theta })$ by 
\begin{equation*}
Y_{\tau }^{i}(\boldsymbol{\theta })Y_{\tau }^{j}(\boldsymbol{\theta })=a^{2}%
\text{ }\left( \frac{\tau }{2}\right) ^{2}\left \vert \boldsymbol{\Sigma }%
\left( \boldsymbol{\theta }\right) \right \vert ^{-\tau }\left \{
C_{1}+C_{2}+C_{3}+C_{4}+C_{5}+C_{6}+C_{7}+C_{8}+C_{9}\right \}
\end{equation*}%
and%
\begin{eqnarray}
E\left[ Y_{i}(\boldsymbol{\theta })Y_{j}(\boldsymbol{\theta })\right]
&=&a^{2}\text{ }\left( \frac{\tau }{2}\right) ^{2}\left \vert \boldsymbol{%
\Sigma }\left( \boldsymbol{\theta }\right) \right \vert ^{-\tau }
\label{A100} \\
&&\times E\left[ C_{1}+C_{2}+C_{3}+C_{4}+C_{5}+C_{6}+C_{7}+C_{8}+C_{9}\right]
.
\end{eqnarray}%
Now we are going to calculate the different expectations appearing in (\ref%
{A100}).

\QTP{Dialog Math}
We have, 
\begin{equation*}
C_{1}=trace\left( \boldsymbol{\Sigma }\left( \boldsymbol{\theta }\right)
^{-1}\dfrac{\partial \boldsymbol{\Sigma }\left( \boldsymbol{\theta }\right) 
}{\partial \theta _{i}}\right) trace\left( \boldsymbol{\Sigma }\left( 
\boldsymbol{\theta }\right) ^{-1}\dfrac{\partial \boldsymbol{\Sigma }\left( 
\boldsymbol{\theta }\right) }{\partial \theta _{j}}\right) \exp \left \{ -%
\frac{2\tau }{2}\left( \boldsymbol{y}_{i}-\boldsymbol{\mu }\left( 
\boldsymbol{\theta }\right) \right) ^{T}\boldsymbol{\Sigma }\left( 
\boldsymbol{\theta }\right) ^{-1}\left( \boldsymbol{y}_{i}-\boldsymbol{\mu }%
\left( \boldsymbol{\theta }\right) \right) \right \} .
\end{equation*}%
Therefore, 
\begin{eqnarray*}
E\left[ C_{1}\right] &=&trace\left( \boldsymbol{\Sigma }\left( \boldsymbol{%
\theta }\right) ^{-1}\dfrac{\partial \boldsymbol{\Sigma }\left( \boldsymbol{%
\theta }\right) }{\partial \theta _{i}}\right) trace\left( \boldsymbol{%
\Sigma }\left( \boldsymbol{\theta }\right) ^{-1}\dfrac{\partial \boldsymbol{%
\Sigma }\left( \boldsymbol{\theta }\right) }{\partial \theta _{j}}\right) \\
&&\dint \frac{1}{\left( 2\pi \right) ^{m/2}\left \vert \boldsymbol{\Sigma }%
\left( \boldsymbol{\theta }\right) \right \vert ^{1/2}}\exp \left \{ -\frac{1%
}{2}\left( \boldsymbol{y}-\boldsymbol{\mu }\left( \boldsymbol{\theta }%
\right) \right) ^{T}\boldsymbol{\Sigma }\left( \boldsymbol{\theta }\right)
^{-1}\left( \boldsymbol{y}-\boldsymbol{\mu }\left( \boldsymbol{\theta }%
\right) \right) \right \} \\
&&\exp \left \{ -\frac{1}{2}\left( \boldsymbol{y}-\boldsymbol{\mu }\left( 
\boldsymbol{\theta }\right) \right) ^{T}\left( \frac{\boldsymbol{\Sigma }%
\left( \boldsymbol{\theta }\right) }{2\tau }\right) ^{-1}\left( \boldsymbol{y%
}-\boldsymbol{\mu }\left( \boldsymbol{\theta }\right) \right) \right \} d%
\boldsymbol{y} \\
&=&trace\left( \boldsymbol{\Sigma }\left( \boldsymbol{\theta }\right) ^{-1}%
\dfrac{\partial \boldsymbol{\Sigma }\left( \boldsymbol{\theta }\right) }{%
\partial \theta _{i}}\right) trace\left( \boldsymbol{\Sigma }\left( 
\boldsymbol{\theta }\right) ^{-1}\dfrac{\partial \boldsymbol{\Sigma }\left( 
\boldsymbol{\theta }\right) }{\partial \theta _{j}}\right) \\
&&\left( \frac{1}{2\tau +1}\right) ^{\frac{m}{2}}\dint \frac{1}{\left( 2\pi
\right) ^{m/2}\left \vert \frac{\boldsymbol{\Sigma }\left( \boldsymbol{%
\theta }\right) }{2\tau +1}\right \vert ^{1/2}}\exp \left \{ -\frac{1}{2}%
\left( \boldsymbol{y}-\boldsymbol{\mu }\left( \boldsymbol{\theta }\right)
\right) ^{T}\left( \frac{\boldsymbol{\Sigma }\left( \boldsymbol{\theta }%
\right) }{2\tau +1}\right) ^{-1}\left( \boldsymbol{y}-\boldsymbol{\mu }%
\left( \boldsymbol{\theta }\right) \right) \right \} d\boldsymbol{y} \\
&=&trace\left( \boldsymbol{\Sigma }\left( \boldsymbol{\theta }\right) ^{-1}%
\dfrac{\partial \boldsymbol{\Sigma }\left( \boldsymbol{\theta }\right) }{%
\partial \theta _{i}}\right) trace\left( \boldsymbol{\Sigma }\left( 
\boldsymbol{\theta }\right) ^{-1}\dfrac{\partial \boldsymbol{\Sigma }\left( 
\boldsymbol{\theta }\right) }{\partial \theta _{j}}\right) \left( \frac{1}{%
2\tau +1}\right) ^{\frac{m}{2}}
\end{eqnarray*}%
The expression of $C_{2}$ is given by 
\begin{eqnarray*}
C_{2} &=&-trace\left( \boldsymbol{\Sigma }\left( \boldsymbol{\theta }\right)
^{-1}\dfrac{\partial \boldsymbol{\Sigma }\left( \boldsymbol{\theta }\right) 
}{\partial \theta _{i}}\right) trace\left( \boldsymbol{\Sigma }\left( 
\boldsymbol{\theta }\right) ^{-1}\dfrac{\partial \boldsymbol{\Sigma }\left( 
\boldsymbol{\theta }\right) }{\partial \theta _{j}}\right) \text{ }b \\
&&\exp \left \{ -\frac{2\tau }{2}\left( \boldsymbol{y}_{i}-\boldsymbol{\mu }%
\left( \boldsymbol{\theta }\right) \right) ^{T}\boldsymbol{\Sigma }\left( 
\boldsymbol{\theta }\right) ^{-1}\left( \boldsymbol{y}_{i}-\boldsymbol{\mu }%
\left( \boldsymbol{\theta }\right) \right) \right \}
\end{eqnarray*}%
and 
\begin{eqnarray*}
E\left[ C_{2}\right] &=&-\frac{\tau }{\left( 1+\tau \right) ^{\frac{m}{2}+1}}%
trace\left( \boldsymbol{\Sigma }\left( \boldsymbol{\theta }\right) ^{-1}%
\dfrac{\partial \boldsymbol{\Sigma }\left( \boldsymbol{\theta }\right) }{%
\partial \theta _{i}}\right) trace\left( \boldsymbol{\Sigma }\left( 
\boldsymbol{\theta }\right) ^{-1}\dfrac{\partial \boldsymbol{\Sigma }\left( 
\boldsymbol{\theta }\right) }{\partial \theta _{j}}\right) \\
&&\dint \frac{1}{\left( 2\pi \right) ^{m/2}\left \vert \boldsymbol{\Sigma }%
\left( \boldsymbol{\theta }\right) \right \vert ^{1/2}}\exp \left \{ -\frac{1%
}{2}\left( \boldsymbol{y}-\boldsymbol{\mu }\left( \boldsymbol{\theta }%
\right) \right) ^{T}\boldsymbol{\Sigma }\left( \boldsymbol{\theta }\right)
^{-1}\left( \boldsymbol{y}-\boldsymbol{\mu }\left( \boldsymbol{\theta }%
\right) \right) \right \} \\
&&\exp \left \{ -\frac{1}{2}\left( \boldsymbol{y}-\boldsymbol{\mu }\left( 
\boldsymbol{\theta }\right) \right) ^{T}\left( \frac{\boldsymbol{\Sigma }%
\left( \boldsymbol{\theta }\right) }{\tau }\right) ^{-1}\left( \boldsymbol{y}%
-\boldsymbol{\mu }\left( \boldsymbol{\theta }\right) \right) \right \} d%
\boldsymbol{y} \\
&=&-\frac{\tau }{\left( 1+\tau \right) ^{\frac{m}{2}+1}}trace\left( 
\boldsymbol{\Sigma }\left( \boldsymbol{\theta }\right) ^{-1}\dfrac{\partial 
\boldsymbol{\Sigma }\left( \boldsymbol{\theta }\right) }{\partial \theta _{i}%
}\right) trace\left( \boldsymbol{\Sigma }\left( \boldsymbol{\theta }\right)
^{-1}\dfrac{\partial \boldsymbol{\Sigma }\left( \boldsymbol{\theta }\right) 
}{\partial \theta _{j}}\right) \\
&&\frac{1}{\left( 1+\tau \right) ^{\frac{m}{2}}}\dint \frac{1}{\left( 2\pi
\right) ^{m/2}\left \vert \frac{\boldsymbol{\Sigma }\left( \boldsymbol{%
\theta }\right) }{\tau +1}\right \vert ^{1/2}}\exp \left \{ -\frac{1}{2}%
\left( \boldsymbol{y}-\boldsymbol{\mu }\left( \boldsymbol{\theta }\right)
\right) ^{T}\left( \frac{\boldsymbol{\Sigma }\left( \boldsymbol{\theta }%
\right) }{\tau +1}\right) ^{-1}\left( \boldsymbol{y}-\boldsymbol{\mu }\left( 
\boldsymbol{\theta }\right) \right) \right \} d\boldsymbol{y} \\
&=&-\frac{\tau }{\left( 1+\tau \right) ^{m+1}}trace\left( \boldsymbol{\Sigma 
}\left( \boldsymbol{\theta }\right) ^{-1}\dfrac{\partial \boldsymbol{\Sigma }%
\left( \boldsymbol{\theta }\right) }{\partial \theta _{i}}\right)
trace\left( \boldsymbol{\Sigma }\left( \boldsymbol{\theta }\right) ^{-1}%
\dfrac{\partial \boldsymbol{\Sigma }\left( \boldsymbol{\theta }\right) }{%
\partial \theta _{j}}\right) .
\end{eqnarray*}%
The expression of $C_{3}$ is given by 
\begin{eqnarray*}
C_{3} &=&-trace\left( \boldsymbol{\Sigma }\left( \boldsymbol{\theta }\right)
^{-1}\dfrac{\partial \boldsymbol{\Sigma }\left( \boldsymbol{\theta }\right) 
}{\partial \theta _{i}}\right) \exp \left \{ -\frac{2\tau }{2}\left( 
\boldsymbol{y}_{i}-\boldsymbol{\mu }\left( \boldsymbol{\theta }\right)
\right) ^{T}\boldsymbol{\Sigma }\left( \boldsymbol{\theta }\right)
^{-1}\left( \boldsymbol{y}_{i}-\boldsymbol{\mu }\left( \boldsymbol{\theta }%
\right) \right) \right \} \\
&&\left[ 2\left( \frac{\partial \boldsymbol{\mu }\left( \boldsymbol{\theta }%
\right) }{\partial \theta _{i}}\right) ^{T}\boldsymbol{\Sigma }\left( 
\boldsymbol{\theta }\right) ^{-1}\left( \boldsymbol{y}_{i}-\boldsymbol{\mu }%
\left( \boldsymbol{\theta }\right) \right) +\left( \boldsymbol{y}_{i}-%
\boldsymbol{\mu }\left( \boldsymbol{\theta }\right) \right) ^{T}\left( 
\boldsymbol{\Sigma }\left( \boldsymbol{\theta }\right) ^{-1}\dfrac{\partial 
\boldsymbol{\Sigma }\left( \boldsymbol{\theta }\right) }{\partial \theta _{j}%
}\boldsymbol{\Sigma }\left( \boldsymbol{\theta }\right) ^{-1}\right) \left( 
\boldsymbol{y}_{i}-\boldsymbol{\mu }\left( \boldsymbol{\theta }\right)
\right) \right] .
\end{eqnarray*}%
Then%
\begin{eqnarray*}
E\left[ C_{3}\right] &=&-trace\left( \boldsymbol{\Sigma }\left( \boldsymbol{%
\theta }\right) ^{-1}\dfrac{\partial \boldsymbol{\Sigma }\left( \boldsymbol{%
\theta }\right) }{\partial \theta _{i}}\right) \frac{1}{\left( 1+2\tau
\right) ^{m/2}} \\
&&\dint \frac{1}{\left( 2\pi \right) ^{m/2}\left \vert \frac{\boldsymbol{%
\Sigma }\left( \boldsymbol{\theta }\right) }{2\tau +1}\right \vert ^{1/2}}%
\exp \left \{ -\frac{1}{2}\left( \boldsymbol{y}-\boldsymbol{\mu }\left( 
\boldsymbol{\theta }\right) \right) ^{T}\boldsymbol{\Sigma }\left( 
\boldsymbol{\theta }\right) ^{-1}\left( \boldsymbol{y}-\boldsymbol{\mu }%
\left( \boldsymbol{\theta }\right) \right) \right \} \\
&&\left( \boldsymbol{y}-\boldsymbol{\mu }\left( \boldsymbol{\theta }\right)
\right) ^{T}\left( \boldsymbol{\Sigma }\left( \boldsymbol{\theta }\right)
^{-1}\dfrac{\partial \boldsymbol{\Sigma }\left( \boldsymbol{\theta }\right) 
}{\partial \theta _{j}}\boldsymbol{\Sigma }\left( \boldsymbol{\theta }%
\right) ^{-1}\right) \left( \boldsymbol{y}-\boldsymbol{\mu }\left( 
\boldsymbol{\theta }\right) \right) d\boldsymbol{y} \\
&=&-trace\left( \boldsymbol{\Sigma }\left( \boldsymbol{\theta }\right) ^{-1}%
\dfrac{\partial \boldsymbol{\Sigma }\left( \boldsymbol{\theta }\right) }{%
\partial \theta _{i}}\right) trace\left( \boldsymbol{\Sigma }\left( 
\boldsymbol{\theta }\right) ^{-1}\dfrac{\partial \boldsymbol{\Sigma }\left( 
\boldsymbol{\theta }\right) }{\partial \theta _{j}}\right) \frac{1}{\left(
1+2\tau \right) ^{\frac{m}{2}+1}}.
\end{eqnarray*}%
In relation to $C_{4}$ we have, 
\begin{equation*}
C_{4}=-b\text{ }trace\left( \boldsymbol{\Sigma }\left( \boldsymbol{\theta }%
\right) ^{-1}\dfrac{\partial \boldsymbol{\Sigma }\left( \boldsymbol{\theta }%
\right) }{\partial \theta _{i}}\right) trace\left( \boldsymbol{\Sigma }%
\left( \boldsymbol{\theta }\right) ^{-1}\dfrac{\partial \boldsymbol{\Sigma }%
\left( \boldsymbol{\theta }\right) }{\partial \theta _{j}}\right) \exp \left
\{ -\frac{\tau }{2}\left( \boldsymbol{y}-\boldsymbol{\mu }\left( \boldsymbol{%
\theta }\right) \right) ^{T}\boldsymbol{\Sigma }\left( \boldsymbol{\theta }%
\right) ^{-1}\left( \boldsymbol{y}-\boldsymbol{\mu }\left( \boldsymbol{%
\theta }\right) \right) \right \}
\end{equation*}%
and 
\begin{eqnarray*}
E\left[ C_{4}\right] &=&-\frac{\tau }{\left( 1+\tau \right) ^{\frac{m}{2}+1}}%
trace\left( \boldsymbol{\Sigma }\left( \boldsymbol{\theta }\right) ^{-1}%
\dfrac{\partial \boldsymbol{\Sigma }\left( \boldsymbol{\theta }\right) }{%
\partial \theta _{i}}\right) trace\left( \boldsymbol{\Sigma }\left( 
\boldsymbol{\theta }\right) ^{-1}\dfrac{\partial \boldsymbol{\Sigma }\left( 
\boldsymbol{\theta }\right) }{\partial \theta _{j}}\right) \\
&&\dint \frac{1}{\left( 2\pi \right) ^{m/2}\left \vert \boldsymbol{\Sigma }%
\left( \boldsymbol{\theta }\right) \right \vert ^{1/2}}\exp \left \{ -\frac{1%
}{2}\left( \boldsymbol{y}-\boldsymbol{\mu }\left( \boldsymbol{\theta }%
\right) \right) ^{T}\boldsymbol{\Sigma }\left( \boldsymbol{\theta }\right)
^{-1}\left( \boldsymbol{y}-\boldsymbol{\mu }\left( \boldsymbol{\theta }%
\right) \right) \right \} \\
&&\exp \left \{ -\frac{\tau }{2}\left( \boldsymbol{y}-\boldsymbol{\mu }%
\left( \boldsymbol{\theta }\right) \right) ^{T}\left( \boldsymbol{\Sigma }%
\left( \boldsymbol{\theta }\right) \right) ^{-1}\left( \boldsymbol{y}-%
\boldsymbol{\mu }\left( \boldsymbol{\theta }\right) \right) \right \} d%
\boldsymbol{y} \\
&=&-\frac{\tau }{\left( 1+\tau \right) ^{\frac{m}{2}+1}}trace\left( 
\boldsymbol{\Sigma }\left( \boldsymbol{\theta }\right) ^{-1}\dfrac{\partial 
\boldsymbol{\Sigma }\left( \boldsymbol{\theta }\right) }{\partial \theta _{i}%
}\right) trace\left( \boldsymbol{\Sigma }\left( \boldsymbol{\theta }\right)
^{-1}\dfrac{\partial \boldsymbol{\Sigma }\left( \boldsymbol{\theta }\right) 
}{\partial \theta _{j}}\right) \\
&&\frac{1}{\left( 1+\tau \right) ^{\frac{m}{2}}}\dint \frac{1}{\left( 2\pi
\right) ^{m/2}\left \vert \frac{\boldsymbol{\Sigma }\left( \boldsymbol{%
\theta }\right) }{\tau +1}\right \vert ^{1/2}}\exp \left \{ -\frac{1}{2}%
\left( \boldsymbol{y}-\boldsymbol{\mu }\left( \boldsymbol{\theta }\right)
\right) ^{T}\left( \frac{\boldsymbol{\Sigma }\left( \boldsymbol{\theta }%
\right) }{\tau +1}\right) ^{-1}\left( \boldsymbol{y}-\boldsymbol{\mu }\left( 
\boldsymbol{\theta }\right) \right) \right \} d\boldsymbol{y} \\
&=&-\frac{\tau }{\left( 1+\tau \right) ^{m+1}}trace\left( \boldsymbol{\Sigma 
}\left( \boldsymbol{\theta }\right) ^{-1}\dfrac{\partial \boldsymbol{\Sigma }%
\left( \boldsymbol{\theta }\right) }{\partial \theta _{i}}\right)
trace\left( \boldsymbol{\Sigma }\left( \boldsymbol{\theta }\right) ^{-1}%
\dfrac{\partial \boldsymbol{\Sigma }\left( \boldsymbol{\theta }\right) }{%
\partial \theta _{j}}\right) .
\end{eqnarray*}%
In relation with $C_{5}$ we have,%
\begin{equation*}
C_{5}=b^{2}\text{ }trace\left( \boldsymbol{\Sigma }\left( \boldsymbol{\theta 
}\right) ^{-1}\dfrac{\partial \boldsymbol{\Sigma }\left( \boldsymbol{\theta }%
\right) }{\partial \theta _{i}}\right) trace\left( \boldsymbol{\Sigma }%
\left( \boldsymbol{\theta }\right) ^{-1}\dfrac{\partial \boldsymbol{\Sigma }%
\left( \boldsymbol{\theta }\right) }{\partial \theta _{j}}\right)
\end{equation*}%
and 
\begin{equation*}
E\left[ C_{5}\right] =\frac{\tau ^{2}}{\left( 1+\tau \right) ^{m+2}}%
trace\left( \boldsymbol{\Sigma }\left( \boldsymbol{\theta }\right) ^{-1}%
\dfrac{\partial \boldsymbol{\Sigma }\left( \boldsymbol{\theta }\right) }{%
\partial \theta _{i}}\right) trace\left( \boldsymbol{\Sigma }\left( 
\boldsymbol{\theta }\right) ^{-1}\dfrac{\partial \boldsymbol{\Sigma }\left( 
\boldsymbol{\theta }\right) }{\partial \theta _{j}}\right) .
\end{equation*}%
The expression of $C_{6}$ is given by, 
\begin{eqnarray*}
C_{6} &=&b\text{ }trace\left( \boldsymbol{\Sigma }\left( \boldsymbol{\theta }%
\right) ^{-1}\dfrac{\partial \boldsymbol{\Sigma }\left( \boldsymbol{\theta }%
\right) }{\partial \theta _{i}}\right) \exp \left \{ -\frac{\tau }{2}\left( 
\boldsymbol{y}-\boldsymbol{\mu }\left( \boldsymbol{\theta }\right) \right)
^{T}\boldsymbol{\Sigma }\left( \boldsymbol{\theta }\right) ^{-1}\left( 
\boldsymbol{y}-\boldsymbol{\mu }\left( \boldsymbol{\theta }\right) \right)
\right \} \\
&&\left[ 2\left( \frac{\partial \boldsymbol{\mu }\left( \boldsymbol{\theta }%
\right) }{\partial \theta _{j}}\right) ^{T}\boldsymbol{\Sigma }\left( 
\boldsymbol{\theta }\right) ^{-1}\left( \boldsymbol{y}-\boldsymbol{\mu }%
\left( \boldsymbol{\theta }\right) \right) +\left( \boldsymbol{y}-%
\boldsymbol{\mu }\left( \boldsymbol{\theta }\right) \right) ^{T}\left( 
\boldsymbol{\Sigma }\left( \boldsymbol{\theta }\right) ^{-1}\dfrac{\partial 
\boldsymbol{\Sigma }\left( \boldsymbol{\theta }\right) }{\partial \theta _{j}%
}\boldsymbol{\Sigma }\left( \boldsymbol{\theta }\right) ^{-1}\right) \left( 
\boldsymbol{y}-\boldsymbol{\mu }\left( \boldsymbol{\theta }\right) \right) %
\right]
\end{eqnarray*}%
and 
\begin{eqnarray*}
E\left[ C_{6}\right] &=&\frac{\tau }{\left( 1+\tau \right) ^{\frac{m}{2}+1}}%
trace\left( \boldsymbol{\Sigma }\left( \boldsymbol{\theta }\right) ^{-1}%
\dfrac{\partial \boldsymbol{\Sigma }\left( \boldsymbol{\theta }\right) }{%
\partial \theta _{i}}\right) trace\left( \boldsymbol{\Sigma }\left( 
\boldsymbol{\theta }\right) ^{-1}\dfrac{\partial \boldsymbol{\Sigma }\left( 
\boldsymbol{\theta }\right) }{\partial \theta _{j}}\right) \frac{1}{\left(
1+\tau \right) ^{\frac{m}{2}+1}} \\
&=&\frac{\tau }{\left( 1+\tau \right) ^{m+2}}trace\left( \boldsymbol{\Sigma }%
\left( \boldsymbol{\theta }\right) ^{-1}\dfrac{\partial \boldsymbol{\Sigma }%
\left( \boldsymbol{\theta }\right) }{\partial \theta _{i}}\right)
trace\left( \boldsymbol{\Sigma }\left( \boldsymbol{\theta }\right) ^{-1}%
\dfrac{\partial \boldsymbol{\Sigma }\left( \boldsymbol{\theta }\right) }{%
\partial \theta _{j}}\right) .
\end{eqnarray*}%
The expression of $C_{7}$ is 
\begin{eqnarray*}
C_{7} &=&-trace\left( \boldsymbol{\Sigma }\left( \boldsymbol{\theta }\right)
^{-1}\dfrac{\partial \boldsymbol{\Sigma }\left( \boldsymbol{\theta }\right) 
}{\partial \theta _{i}}\right) \exp \left \{ -\frac{2\tau }{2}\left( 
\boldsymbol{y}-\boldsymbol{\mu }\left( \boldsymbol{\theta }\right) \right)
^{T}\boldsymbol{\Sigma }\left( \boldsymbol{\theta }\right) ^{-1}\left( 
\boldsymbol{y}-\boldsymbol{\mu }\left( \boldsymbol{\theta }\right) \right)
\right \} \\
&&\left[ 2\left( \frac{\partial \boldsymbol{\mu }\left( \boldsymbol{\theta }%
\right) }{\partial \theta _{j}}\right) ^{T}\boldsymbol{\Sigma }\left( 
\boldsymbol{\theta }\right) ^{-1}\left( \boldsymbol{y}-\boldsymbol{\mu }%
\left( \boldsymbol{\theta }\right) \right) +\left( \boldsymbol{y}-%
\boldsymbol{\mu }\left( \boldsymbol{\theta }\right) \right) ^{T}\left( 
\boldsymbol{\Sigma }\left( \boldsymbol{\theta }\right) ^{-1}\dfrac{\partial 
\boldsymbol{\Sigma }\left( \boldsymbol{\theta }\right) }{\partial \theta _{j}%
}\boldsymbol{\Sigma }\left( \boldsymbol{\theta }\right) ^{-1}\right) \left( 
\boldsymbol{y}-\boldsymbol{\mu }\left( \boldsymbol{\theta }\right) \right) %
\right]
\end{eqnarray*}%
and 
\begin{equation*}
E\left[ C_{7}\right] =-\frac{\tau }{\left( 1+2\tau \right) ^{\frac{m}{2}+1}}%
trace\left( \boldsymbol{\Sigma }\left( \boldsymbol{\theta }\right) ^{-1}%
\dfrac{\partial \boldsymbol{\Sigma }\left( \boldsymbol{\theta }\right) }{%
\partial \theta _{i}}\right) trace\left( \boldsymbol{\Sigma }\left( 
\boldsymbol{\theta }\right) ^{-1}\dfrac{\partial \boldsymbol{\Sigma }\left( 
\boldsymbol{\theta }\right) }{\partial \theta _{j}}\right) .
\end{equation*}%
In relation to $C_{8},$%
\begin{eqnarray*}
C_{8} &=&b\text{ }trace\left( \boldsymbol{\Sigma }\left( \boldsymbol{\theta }%
\right) ^{-1}\dfrac{\partial \boldsymbol{\Sigma }\left( \boldsymbol{\theta }%
\right) }{\partial \theta _{i}}\right) \exp \left \{ -\frac{\tau }{2}\left( 
\boldsymbol{y}-\boldsymbol{\mu }\left( \boldsymbol{\theta }\right) \right)
^{T}\boldsymbol{\Sigma }\left( \boldsymbol{\theta }\right) ^{-1}\left( 
\boldsymbol{y}-\boldsymbol{\mu }\left( \boldsymbol{\theta }\right) \right)
\right \} \\
&&\left[ 2\left( \frac{\partial \boldsymbol{\mu }\left( \boldsymbol{\theta }%
\right) }{\partial \theta _{j}}\right) ^{T}\boldsymbol{\Sigma }\left( 
\boldsymbol{\theta }\right) ^{-1}\left( \boldsymbol{y}-\boldsymbol{\mu }%
\left( \boldsymbol{\theta }\right) \right) +\left( \boldsymbol{y}-%
\boldsymbol{\mu }\left( \boldsymbol{\theta }\right) \right) ^{T}\left( 
\boldsymbol{\Sigma }\left( \boldsymbol{\theta }\right) ^{-1}\dfrac{\partial 
\boldsymbol{\Sigma }\left( \boldsymbol{\theta }\right) }{\partial \theta _{j}%
}\boldsymbol{\Sigma }\left( \boldsymbol{\theta }\right) ^{-1}\right) \left( 
\boldsymbol{y}-\boldsymbol{\mu }\left( \boldsymbol{\theta }\right) \right) %
\right]
\end{eqnarray*}%
and 
\begin{eqnarray*}
E\left[ C_{8}\right] &=&\frac{\tau }{\left( 1+\tau \right) ^{\frac{m}{2}+1}}%
trace\left( \boldsymbol{\Sigma }\left( \boldsymbol{\theta }\right) ^{-1}%
\dfrac{\partial \boldsymbol{\Sigma }\left( \boldsymbol{\theta }\right) }{%
\partial \theta _{i}}\right) trace\left( \boldsymbol{\Sigma }\left( 
\boldsymbol{\theta }\right) ^{-1}\dfrac{\partial \boldsymbol{\Sigma }\left( 
\boldsymbol{\theta }\right) }{\partial \theta _{j}}\right) \frac{1}{\left(
1+\tau \right) ^{\frac{m}{2}+1}} \\
&=&\frac{\tau }{\left( 1+\tau \right) ^{m+2}}trace\left( \boldsymbol{\Sigma }%
\left( \boldsymbol{\theta }\right) ^{-1}\dfrac{\partial \boldsymbol{\Sigma }%
\left( \boldsymbol{\theta }\right) }{\partial \theta _{i}}\right)
trace\left( \boldsymbol{\Sigma }\left( \boldsymbol{\theta }\right) ^{-1}%
\dfrac{\partial \boldsymbol{\Sigma }\left( \boldsymbol{\theta }\right) }{%
\partial \theta _{j}}\right) .
\end{eqnarray*}%
Finally, 
\begin{eqnarray*}
C_{9} &=&\exp \left \{ -\frac{2\tau }{2}\left( \boldsymbol{y}_{i}-%
\boldsymbol{\mu }\left( \boldsymbol{\theta }\right) \right) ^{T}\boldsymbol{%
\Sigma }\left( \boldsymbol{\theta }\right) ^{-1}\left( \boldsymbol{y}_{i}-%
\boldsymbol{\mu }\left( \boldsymbol{\theta }\right) \right) \right \} \\
&&\left[ 2\left( \frac{\partial \boldsymbol{\mu }\left( \boldsymbol{\theta }%
\right) }{\partial \theta _{i}}\right) ^{T}\boldsymbol{\Sigma }\left( 
\boldsymbol{\theta }\right) ^{-1}\left( \boldsymbol{y}_{i}-\boldsymbol{\mu }%
\left( \boldsymbol{\theta }\right) \right) +\left( \boldsymbol{y}_{i}-%
\boldsymbol{\mu }\left( \boldsymbol{\theta }\right) \right) ^{T}\left( 
\boldsymbol{\Sigma }\left( \boldsymbol{\theta }\right) ^{-1}\dfrac{\partial 
\boldsymbol{\Sigma }\left( \boldsymbol{\theta }\right) }{\partial \theta _{i}%
}\boldsymbol{\Sigma }\left( \boldsymbol{\theta }\right) ^{-1}\right) \left( 
\boldsymbol{y}_{i}-\boldsymbol{\mu }\left( \boldsymbol{\theta }\right)
\right) \right] \\
&&\left[ 2\left( \frac{\partial \boldsymbol{\mu }\left( \boldsymbol{\theta }%
\right) }{\partial \theta _{j}}\right) ^{T}\boldsymbol{\Sigma }\left( 
\boldsymbol{\theta }\right) ^{-1}\left( \boldsymbol{y}_{i}-\boldsymbol{\mu }%
\left( \boldsymbol{\theta }\right) \right) +\left( \boldsymbol{y}_{i}-%
\boldsymbol{\mu }\left( \boldsymbol{\theta }\right) \right) ^{T}\left( 
\boldsymbol{\Sigma }\left( \boldsymbol{\theta }\right) ^{-1}\dfrac{\partial 
\boldsymbol{\Sigma }\left( \boldsymbol{\theta }\right) }{\partial \theta _{j}%
}\boldsymbol{\Sigma }\left( \boldsymbol{\theta }\right) ^{-1}\right) \left( 
\boldsymbol{y}_{i}-\boldsymbol{\mu }\left( \boldsymbol{\theta }\right)
\right) \right] .
\end{eqnarray*}%
Therefore,%
\begin{eqnarray*}
C_{9} &=&\exp \left \{ -\frac{2\tau }{2}\left( \boldsymbol{y}_{i}-%
\boldsymbol{\mu }\left( \boldsymbol{\theta }\right) \right) ^{T}\boldsymbol{%
\Sigma }\left( \boldsymbol{\theta }\right) ^{-1}\left( \boldsymbol{y}_{i}-%
\boldsymbol{\mu }\left( \boldsymbol{\theta }\right) \right) \right \} \left
\{ 4\left( \boldsymbol{y}_{i}-\boldsymbol{\mu }\left( \boldsymbol{\theta }%
\right) \right) ^{T}\boldsymbol{\Sigma }\left( \boldsymbol{\theta }\right)
^{-1}\left( \frac{\partial \boldsymbol{\mu }\left( \boldsymbol{\theta }%
\right) }{\partial \theta _{i}}\right) ^{T}\right. \\
&&\left. \frac{\partial \boldsymbol{\mu }\left( \boldsymbol{\theta }\right) 
}{\partial \theta _{j}}\boldsymbol{\Sigma }\left( \boldsymbol{\theta }%
\right) ^{-1}\left( \boldsymbol{y}_{i}-\boldsymbol{\mu }\left( \boldsymbol{%
\theta }\right) \right) \right. \\
&&\left. +2\left( \frac{\partial \boldsymbol{\mu }\left( \boldsymbol{\theta }%
\right) }{\partial \theta _{i}}\right) ^{T}\boldsymbol{\Sigma }\left( 
\boldsymbol{\theta }\right) ^{-1}\left( \boldsymbol{y}_{i}-\boldsymbol{\mu }%
\left( \boldsymbol{\theta }\right) \right) \left( \boldsymbol{y}-\boldsymbol{%
\mu }\left( \boldsymbol{\theta }\right) \right) ^{T}\left( \boldsymbol{%
\Sigma }\left( \boldsymbol{\theta }\right) ^{-1}\dfrac{\partial \boldsymbol{%
\Sigma }\left( \boldsymbol{\theta }\right) }{\partial \theta _{j}}%
\boldsymbol{\Sigma }\left( \boldsymbol{\theta }\right) ^{-1}\right) \left( 
\boldsymbol{y}_{i}-\boldsymbol{\mu }\left( \boldsymbol{\theta }\right)
\right) \right. \\
&&\left. +2\left( \boldsymbol{y}_{i}-\boldsymbol{\mu }\left( \boldsymbol{%
\theta }\right) \right) ^{T}\left( \boldsymbol{\Sigma }\left( \boldsymbol{%
\theta }\right) ^{-1}\dfrac{\partial \boldsymbol{\Sigma }\left( \boldsymbol{%
\theta }\right) }{\partial \theta _{i}}\boldsymbol{\Sigma }\left( 
\boldsymbol{\theta }\right) ^{-1}\right) \left( \boldsymbol{y}_{i}-%
\boldsymbol{\mu }\left( \boldsymbol{\theta }\right) \right) \left( \frac{%
\partial \boldsymbol{\mu }\left( \boldsymbol{\theta }\right) }{\partial
\theta _{j}}\right) ^{T}\boldsymbol{\Sigma }\left( \boldsymbol{\theta }%
\right) ^{-1}\left( \boldsymbol{y}_{i}-\boldsymbol{\mu }\left( \boldsymbol{%
\theta }\right) \right) \right. \\
&&\left. +\left( \boldsymbol{y}_{i}-\boldsymbol{\mu }\left( \boldsymbol{%
\theta }\right) \right) ^{T}\left( \boldsymbol{\Sigma }\left( \boldsymbol{%
\theta }\right) ^{-1}\dfrac{\partial \boldsymbol{\Sigma }\left( \boldsymbol{%
\theta }\right) }{\partial \theta _{i}}\boldsymbol{\Sigma }\left( 
\boldsymbol{\theta }\right) ^{-1}\right) \left( \boldsymbol{y}_{i}-%
\boldsymbol{\mu }\left( \boldsymbol{\theta }\right) \right) \right \} \\
&=&A_{1}+A_{2}+A_{3}+A_{4}
\end{eqnarray*}%
and 
\begin{equation*}
E\left[ C_{9}\right] =E\left[ A_{1}\right] +E\left[ A_{4}\right]
\end{equation*}%
because $E\left[ A_{2}\right] =E\left[ A_{3}\right] =0.$ We have, 
\begin{eqnarray*}
E\left[ A_{1}\right] &=&E\left[ \exp -\frac{2\tau }{2}\left( \boldsymbol{Y}-%
\boldsymbol{\mu }\left( \boldsymbol{\theta }\right) \right) ^{T}\boldsymbol{%
\Sigma }\left( \boldsymbol{\theta }\right) ^{-1}\left( \boldsymbol{Y}-%
\boldsymbol{\mu }\left( \boldsymbol{\theta }\right) \right) 4\left( 
\boldsymbol{Y}-\boldsymbol{\mu }\left( \boldsymbol{\theta }\right) \right)
^{T}\boldsymbol{\Sigma }\left( \boldsymbol{\theta }\right) ^{-1}\right. \\
&&\left. \left( \frac{\partial \boldsymbol{\mu }\left( \boldsymbol{\theta }%
\right) }{\partial \theta _{i}}\right) ^{T}\frac{\partial \boldsymbol{\mu }%
\left( \boldsymbol{\theta }\right) }{\partial \theta _{j}}\boldsymbol{\Sigma 
}\left( \boldsymbol{\theta }\right) ^{-1}\left( \boldsymbol{Y}-\boldsymbol{%
\mu }\left( \boldsymbol{\theta }\right) \right) \right] \\
&=&4\dint \frac{1}{\left( 2\pi \right) ^{m/2}\left \vert \boldsymbol{\Sigma }%
\left( \boldsymbol{\theta }\right) \right \vert ^{1/2}}\exp \left \{ -\frac{1%
}{2}\left( \boldsymbol{y}-\boldsymbol{\mu }\left( \boldsymbol{\theta }%
\right) \right) ^{T}\left( \frac{\boldsymbol{\Sigma }\left( \boldsymbol{%
\theta }\right) }{2\tau +1}\right) ^{-1}\left( \boldsymbol{y}-\boldsymbol{%
\mu }\left( \boldsymbol{\theta }\right) \right) \right \} \left( \boldsymbol{%
y}-\boldsymbol{\mu }\left( \boldsymbol{\theta }\right) \right) ^{T} \\
&&\boldsymbol{\Sigma }\left( \boldsymbol{\theta }\right) ^{-1}\left( \frac{%
\partial \boldsymbol{\mu }\left( \boldsymbol{\theta }\right) }{\partial
\theta _{i}}\right) ^{T}\frac{\partial \boldsymbol{\mu }\left( \boldsymbol{%
\theta }\right) }{\partial \theta _{j}}\boldsymbol{\Sigma }\left( 
\boldsymbol{\theta }\right) ^{-1}\left( \boldsymbol{y}-\boldsymbol{\mu }%
\left( \boldsymbol{\theta }\right) \right) d\boldsymbol{y} \\
&=&4\frac{1}{\left( 2\tau +1\right) ^{m/2}}\dint \frac{1}{\left( 2\pi
\right) ^{m/2}\left \vert \frac{\boldsymbol{\Sigma }\left( \boldsymbol{%
\theta }\right) }{2\tau +1}\right \vert ^{1/2}}\exp \left \{ -\frac{1}{2}%
\left( \boldsymbol{y}-\boldsymbol{\mu }\left( \boldsymbol{\theta }\right)
\right) ^{T}\left( \frac{\boldsymbol{\Sigma }\left( \boldsymbol{\theta }%
\right) }{2\tau +1}\right) ^{-1}\left( \boldsymbol{y}-\boldsymbol{\mu }%
\left( \boldsymbol{\theta }\right) \right) \right \} \\
&&\left( \boldsymbol{y}-\boldsymbol{\mu }\left( \boldsymbol{\theta }\right)
\right) ^{T}\boldsymbol{\Sigma }\left( \boldsymbol{\theta }\right)
^{-1}\left( \frac{\partial \boldsymbol{\mu }\left( \boldsymbol{\theta }%
\right) }{\partial \theta _{i}}\right) ^{T}\frac{\partial \boldsymbol{\mu }%
\left( \boldsymbol{\theta }\right) }{\partial \theta _{j}}\boldsymbol{\Sigma 
}\left( \boldsymbol{\theta }\right) ^{-1}\left( \boldsymbol{y}-\boldsymbol{%
\mu }\left( \boldsymbol{\theta }\right) \right) d\boldsymbol{y} \\
&=&4\frac{1}{\left( 2\tau +1\right) ^{m/2}}trace\left( \boldsymbol{\Sigma }%
\left( \boldsymbol{\theta }\right) ^{-1}\left( \frac{\partial \boldsymbol{%
\mu }\left( \boldsymbol{\theta }\right) }{\partial \theta _{i}}\right) ^{T}%
\frac{\partial \boldsymbol{\mu }\left( \boldsymbol{\theta }\right) }{%
\partial \theta _{j}}\boldsymbol{\Sigma }\left( \boldsymbol{\theta }\right)
^{-1}\frac{\boldsymbol{\Sigma }\left( \boldsymbol{\theta }\right) }{2\tau +1}%
\right) \\
&=&\frac{4}{\left( 2\tau +1\right) ^{m/2+1}}trace\left( \boldsymbol{\Sigma }%
\left( \boldsymbol{\theta }\right) ^{-1}\left( \frac{\partial \boldsymbol{%
\mu }\left( \boldsymbol{\theta }\right) }{\partial \theta _{i}}\right) ^{T}%
\frac{\partial \boldsymbol{\mu }\left( \boldsymbol{\theta }\right) }{%
\partial \theta _{j}}\right) .
\end{eqnarray*}%
Now we are going to get $E\left[ A_{4}\right] .$%
\begin{eqnarray*}
E\left[ A_{4}\right] &=&E\left[ \exp \left \{ -\frac{2\tau }{2}\left( 
\boldsymbol{Y}-\boldsymbol{\mu }\left( \boldsymbol{\theta }\right) \right)
^{T}\boldsymbol{\Sigma }\left( \boldsymbol{\theta }\right) ^{-1}\left( 
\boldsymbol{Y}-\boldsymbol{\mu }\left( \boldsymbol{\theta }\right) \right)
\right \} \left( \boldsymbol{Y}-\boldsymbol{\mu }\left( \boldsymbol{\theta }%
\right) \right) ^{T}\left( \boldsymbol{\Sigma }\left( \boldsymbol{\theta }%
\right) ^{-1}\dfrac{\partial \boldsymbol{\Sigma }\left( \boldsymbol{\theta }%
\right) }{\partial \theta _{i}}\boldsymbol{\Sigma }\left( \boldsymbol{\theta 
}\right) ^{-1}\right) \right. \\
&&\left. \left( \boldsymbol{Y}-\boldsymbol{\mu }\left( \boldsymbol{\theta }%
\right) \right) \left( \boldsymbol{Y}-\boldsymbol{\mu }\left( \boldsymbol{%
\theta }\right) \right) ^{T}\left( \boldsymbol{\Sigma }\left( \boldsymbol{%
\theta }\right) ^{-1}\dfrac{\partial \boldsymbol{\Sigma }\left( \boldsymbol{%
\theta }\right) }{\partial \theta _{j}}\boldsymbol{\Sigma }\left( 
\boldsymbol{\theta }\right) ^{-1}\right) \left( \boldsymbol{Y}-\boldsymbol{%
\mu }\left( \boldsymbol{\theta }\right) \right) \right] \\
&=&\dint \frac{1}{\left( 2\pi \right) ^{m/2}\left \vert \boldsymbol{\Sigma }%
\left( \boldsymbol{\theta }\right) \right \vert ^{1/2}}\exp \left \{ -\frac{1%
}{2}\left( \boldsymbol{y}-\boldsymbol{\mu }\left( \boldsymbol{\theta }%
\right) \right) ^{T}\left( \frac{\boldsymbol{\Sigma }\left( \boldsymbol{%
\theta }\right) }{2\tau +1}\right) ^{-1}\left( \boldsymbol{y}-\boldsymbol{%
\mu }\left( \boldsymbol{\theta }\right) \right) \right \} \left( \boldsymbol{%
y}-\boldsymbol{\mu }\left( \boldsymbol{\theta }\right) \right) ^{T} \\
&&\left( \boldsymbol{\Sigma }\left( \boldsymbol{\theta }\right) ^{-1}\dfrac{%
\partial \boldsymbol{\Sigma }\left( \boldsymbol{\theta }\right) }{\partial
\theta _{i}}\boldsymbol{\Sigma }\left( \boldsymbol{\theta }\right)
^{-1}\right) \left( \boldsymbol{y}-\boldsymbol{\mu }\left( \boldsymbol{%
\theta }\right) \right) \left( \boldsymbol{y}-\boldsymbol{\mu }\left( 
\boldsymbol{\theta }\right) \right) ^{T} \\
&&\left( \boldsymbol{\Sigma }\left( \boldsymbol{\theta }\right) ^{-1}\dfrac{%
\partial \boldsymbol{\Sigma }\left( \boldsymbol{\theta }\right) }{\partial
\theta _{j}}\boldsymbol{\Sigma }\left( \boldsymbol{\theta }\right)
^{-1}\right) \left( \boldsymbol{y}-\boldsymbol{\mu }\left( \boldsymbol{%
\theta }\right) \right) d\boldsymbol{y} \\
&=&\frac{1}{\left( 2\tau +1\right) ^{m/2}}\left( trace\left( \boldsymbol{%
\Sigma }\left( \boldsymbol{\theta }\right) ^{-1}\dfrac{\partial \boldsymbol{%
\Sigma }\left( \boldsymbol{\theta }\right) }{\partial \theta _{i}}%
\boldsymbol{\Sigma }\left( \boldsymbol{\theta }\right) ^{-1}\frac{%
\boldsymbol{\Sigma }\left( \boldsymbol{\theta }\right) }{2\tau +1}\right)
\right. \\
&&\left. \left[ \boldsymbol{\Sigma }\left( \boldsymbol{\theta }\right) ^{-1}%
\dfrac{\partial \boldsymbol{\Sigma }\left( \boldsymbol{\theta }\right) }{%
\partial \theta _{i}}\boldsymbol{\Sigma }\left( \boldsymbol{\theta }\right)
^{-1}+\boldsymbol{\Sigma }\left( \boldsymbol{\theta }\right) ^{-1}\dfrac{%
\partial \boldsymbol{\Sigma }\left( \boldsymbol{\theta }\right) }{\partial
\theta _{j}}\boldsymbol{\Sigma }\left( \boldsymbol{\theta }\right) ^{-1}%
\right] \frac{\boldsymbol{\Sigma }\left( \boldsymbol{\theta }\right) }{2\tau
+1}\right) \\
&&+\frac{1}{\left( 2\tau +1\right) ^{m/2}}trace\left( \boldsymbol{\Sigma }%
\left( \boldsymbol{\theta }\right) ^{-1}\dfrac{\partial \boldsymbol{\Sigma }%
\left( \boldsymbol{\theta }\right) }{\partial \theta _{i}}\boldsymbol{\Sigma 
}\left( \boldsymbol{\theta }\right) ^{-1}\frac{\boldsymbol{\Sigma }\left( 
\boldsymbol{\theta }\right) }{2\tau +1}\right) \\
&&\times trace\left( \boldsymbol{\Sigma }\left( \boldsymbol{\theta }\right)
^{-1}\dfrac{\partial \boldsymbol{\Sigma }\left( \boldsymbol{\theta }\right) 
}{\partial \theta _{j}}\boldsymbol{\Sigma }\left( \boldsymbol{\theta }%
\right) ^{-1}\frac{\boldsymbol{\Sigma }\left( \boldsymbol{\theta }\right) }{%
2\tau +1}\right) \\
&=&\frac{1}{\left( 2\tau +1\right) ^{\frac{m}{2}+2}}trace\left( \boldsymbol{%
\Sigma }\left( \boldsymbol{\theta }\right) ^{-1}\dfrac{\partial \boldsymbol{%
\Sigma }\left( \boldsymbol{\theta }\right) }{\partial \theta _{i}}%
\boldsymbol{\Sigma }\left( \boldsymbol{\theta }\right) ^{-1}\dfrac{\partial 
\boldsymbol{\Sigma }\left( \boldsymbol{\theta }\right) }{\partial \theta _{j}%
}\right) \\
&&+\frac{1}{\left( 2\tau +1\right) ^{\frac{m}{2}+2}}trace\left( \boldsymbol{%
\Sigma }\left( \boldsymbol{\theta }\right) ^{-1}\dfrac{\partial \boldsymbol{%
\Sigma }\left( \boldsymbol{\theta }\right) }{\partial \theta _{i}}%
\boldsymbol{\Sigma }\left( \boldsymbol{\theta }\right) ^{-1}\left( \dfrac{%
\partial \boldsymbol{\Sigma }\left( \boldsymbol{\theta }\right) }{\partial
\theta _{j}}\right) ^{T}\right) \\
&&+\frac{1}{\left( 2\tau +1\right) ^{\frac{m}{2}+2}}trace\left( \boldsymbol{%
\Sigma }\left( \boldsymbol{\theta }\right) ^{-1}\dfrac{\partial \boldsymbol{%
\Sigma }\left( \boldsymbol{\theta }\right) }{\partial \theta _{i}}\right)
trace\left( \boldsymbol{\Sigma }\left( \boldsymbol{\theta }\right) ^{-1}%
\dfrac{\partial \boldsymbol{\Sigma }\left( \boldsymbol{\theta }\right) }{%
\partial \theta _{j}}\right) \\
&=&2\frac{1}{\left( 2\tau +1\right) ^{\frac{m}{2}+2}}trace\left( \boldsymbol{%
\Sigma }\left( \boldsymbol{\theta }\right) ^{-1}\dfrac{\partial \boldsymbol{%
\Sigma }\left( \boldsymbol{\theta }\right) }{\partial \theta _{i}}%
\boldsymbol{\Sigma }\left( \boldsymbol{\theta }\right) ^{-1}\dfrac{\partial 
\boldsymbol{\Sigma }\left( \boldsymbol{\theta }\right) }{\partial \theta _{j}%
}\right) \\
&&+\frac{1}{\left( 2\tau +1\right) ^{\frac{m}{2}+2}}trace\left( \boldsymbol{%
\Sigma }\left( \boldsymbol{\theta }\right) ^{-1}\dfrac{\partial \boldsymbol{%
\Sigma }\left( \boldsymbol{\theta }\right) }{\partial \theta _{i}}\right)
trace\left( \boldsymbol{\Sigma }\left( \boldsymbol{\theta }\right) ^{-1}%
\dfrac{\partial \boldsymbol{\Sigma }\left( \boldsymbol{\theta }\right) }{%
\partial \theta _{j}}\right) .
\end{eqnarray*}%
Based on the previous results we have,%
\begin{eqnarray*}
E\left[ Y_{\tau }^{i}(\boldsymbol{\theta })Y_{\tau }^{j}(\boldsymbol{\theta }%
)\right] &=&a^{2}\left( \frac{\tau }{2}\right) ^{2}\left \vert \boldsymbol{%
\Sigma }\left( \boldsymbol{\theta }\right) \right \vert ^{-\tau } \\
&&\left \{ \frac{1}{\left( 2\tau +1\right) ^{\frac{m}{2}}}trace\left( 
\boldsymbol{\Sigma }\left( \boldsymbol{\theta }\right) ^{-1}\dfrac{\partial 
\boldsymbol{\Sigma }\left( \boldsymbol{\theta }\right) }{\partial \theta _{i}%
}\right) trace\left( \boldsymbol{\Sigma }\left( \boldsymbol{\theta }\right)
^{-1}\dfrac{\partial \boldsymbol{\Sigma }\left( \boldsymbol{\theta }\right) 
}{\partial \theta _{j}}\right) \right. \\
&&\left. -\frac{\tau }{\left( \tau +1\right) ^{m+1}}trace\left( \boldsymbol{%
\Sigma }\left( \boldsymbol{\theta }\right) ^{-1}\dfrac{\partial \boldsymbol{%
\Sigma }\left( \boldsymbol{\theta }\right) }{\partial \theta _{i}}\right)
trace\left( \boldsymbol{\Sigma }\left( \boldsymbol{\theta }\right) ^{-1}%
\dfrac{\partial \boldsymbol{\Sigma }\left( \boldsymbol{\theta }\right) }{%
\partial \theta _{j}}\right) \right. \\
&&\left. -\frac{1}{\left( 2\tau +1\right) ^{\frac{m}{2}+1}}trace\left( 
\boldsymbol{\Sigma }\left( \boldsymbol{\theta }\right) ^{-1}\dfrac{\partial 
\boldsymbol{\Sigma }\left( \boldsymbol{\theta }\right) }{\partial \theta _{i}%
}\right) trace\left( \boldsymbol{\Sigma }\left( \boldsymbol{\theta }\right)
^{-1}\dfrac{\partial \boldsymbol{\Sigma }\left( \boldsymbol{\theta }\right) 
}{\partial \theta _{j}}\right) \right. \\
&&\left. -\frac{\tau }{\left( \tau +1\right) ^{m+1}}trace\left( \boldsymbol{%
\Sigma }\left( \boldsymbol{\theta }\right) ^{-1}\dfrac{\partial \boldsymbol{%
\Sigma }\left( \boldsymbol{\theta }\right) }{\partial \theta _{i}}\right)
trace\left( \boldsymbol{\Sigma }\left( \boldsymbol{\theta }\right) ^{-1}%
\dfrac{\partial \boldsymbol{\Sigma }\left( \boldsymbol{\theta }\right) }{%
\partial \theta _{j}}\right) \right. \\
&&\left. +\frac{\tau ^{2}}{\left( \tau +1\right) ^{m+2}}trace\left( 
\boldsymbol{\Sigma }\left( \boldsymbol{\theta }\right) ^{-1}\dfrac{\partial 
\boldsymbol{\Sigma }\left( \boldsymbol{\theta }\right) }{\partial \theta _{i}%
}\right) trace\left( \boldsymbol{\Sigma }\left( \boldsymbol{\theta }\right)
^{-1}\dfrac{\partial \boldsymbol{\Sigma }\left( \boldsymbol{\theta }\right) 
}{\partial \theta _{j}}\right) \right. \\
&&\left. +\frac{\tau }{\left( \tau +1\right) ^{m+2}}trace\left( \boldsymbol{%
\Sigma }\left( \boldsymbol{\theta }\right) ^{-1}\dfrac{\partial \boldsymbol{%
\Sigma }\left( \boldsymbol{\theta }\right) }{\partial \theta _{i}}\right)
trace\left( \boldsymbol{\Sigma }\left( \boldsymbol{\theta }\right) ^{-1}%
\dfrac{\partial \boldsymbol{\Sigma }\left( \boldsymbol{\theta }\right) }{%
\partial \theta _{j}}\right) \right. \\
&&\left. -\frac{1}{\left( 2\tau +1\right) ^{\frac{m}{2}+1}}trace\left( 
\boldsymbol{\Sigma }\left( \boldsymbol{\theta }\right) ^{-1}\dfrac{\partial 
\boldsymbol{\Sigma }\left( \boldsymbol{\theta }\right) }{\partial \theta _{i}%
}\right) trace\left( \boldsymbol{\Sigma }\left( \boldsymbol{\theta }\right)
^{-1}\dfrac{\partial \boldsymbol{\Sigma }\left( \boldsymbol{\theta }\right) 
}{\partial \theta _{j}}\right) \right. \\
&&\left. +\frac{\tau }{\left( \tau +1\right) ^{m+2}}trace\left( \boldsymbol{%
\Sigma }\left( \boldsymbol{\theta }\right) ^{-1}\dfrac{\partial \boldsymbol{%
\Sigma }\left( \boldsymbol{\theta }\right) }{\partial \theta _{i}}\right)
trace\left( \boldsymbol{\Sigma }\left( \boldsymbol{\theta }\right) ^{-1}%
\dfrac{\partial \boldsymbol{\Sigma }\left( \boldsymbol{\theta }\right) }{%
\partial \theta _{j}}\right) \right. \\
&&\left. +\frac{4}{\left( 2\tau +1\right) ^{\frac{m}{2}+1}}trace\left( 
\boldsymbol{\Sigma }\left( \boldsymbol{\theta }\right) ^{-1}\left( \frac{%
\partial \boldsymbol{\mu }\left( \boldsymbol{\theta }\right) }{\partial
\theta _{i}}\right) ^{T}\frac{\partial \boldsymbol{\mu }\left( \boldsymbol{%
\theta }\right) }{\partial \theta _{j}}\right) \right. \\
&&\left. +\frac{2}{\left( 2\tau +1\right) ^{\frac{m}{2}+2}}trace\left( 
\boldsymbol{\Sigma }\left( \boldsymbol{\theta }\right) ^{-1}\dfrac{\partial 
\boldsymbol{\Sigma }\left( \boldsymbol{\theta }\right) }{\partial \theta _{i}%
}\boldsymbol{\Sigma }\left( \boldsymbol{\theta }\right) ^{-1}\dfrac{\partial 
\boldsymbol{\Sigma }\left( \boldsymbol{\theta }\right) }{\partial \theta _{j}%
}\right) \right. \\
&&\left. +\frac{1}{\left( 2\tau +1\right) ^{\frac{m}{2}+2}}trace\left( 
\boldsymbol{\Sigma }\left( \boldsymbol{\theta }\right) ^{-1}\dfrac{\partial 
\boldsymbol{\Sigma }\left( \boldsymbol{\theta }\right) }{\partial \theta _{i}%
}\right) trace\left( \boldsymbol{\Sigma }\left( \boldsymbol{\theta }\right)
^{-1}\dfrac{\partial \boldsymbol{\Sigma }\left( \boldsymbol{\theta }\right) 
}{\partial \theta _{j}}\right) \right \} .
\end{eqnarray*}%
The previous expression can be written as, 
\begin{eqnarray*}
E\left[ Y_{\tau }^{i}(\boldsymbol{\theta })Y_{\tau }^{j}(\boldsymbol{\theta }%
)\right] &=&a^{2}\left( \frac{\tau }{2}\right) ^{2}\boldsymbol{\Sigma }%
\left( \boldsymbol{\theta }\right) ^{-\tau }\left \{ trace\left( \boldsymbol{%
\Sigma }\left( \boldsymbol{\theta }\right) ^{-1}\dfrac{\partial \boldsymbol{%
\Sigma }\left( \boldsymbol{\theta }\right) }{\partial \theta _{i}}\right)
trace\left( \boldsymbol{\Sigma }\left( \boldsymbol{\theta }\right) ^{-1}%
\dfrac{\partial \boldsymbol{\Sigma }\left( \boldsymbol{\theta }\right) }{%
\partial \theta _{j}}\right) \right. \\
&&\left. \left[ \frac{1}{\left( 2\tau +1\right) ^{\frac{m}{2}}}-\frac{1}{%
\left( 2\tau +1\right) ^{\frac{m}{2}+1}}-\frac{1}{\left( 2\tau +1\right) ^{%
\frac{m}{2}+1}}+\frac{1}{\left( 2\tau +1\right) ^{\frac{m}{2}+2}}\right]
\right. \\
&&\left. +trace\left( \boldsymbol{\Sigma }\left( \boldsymbol{\theta }\right)
^{-1}\dfrac{\partial \boldsymbol{\Sigma }\left( \boldsymbol{\theta }\right) 
}{\partial \theta _{i}}\right) trace\left( \boldsymbol{\Sigma }\left( 
\boldsymbol{\theta }\right) ^{-1}\dfrac{\partial \boldsymbol{\Sigma }\left( 
\boldsymbol{\theta }\right) }{\partial \theta _{j}}\right) \right. \\
&&\left. \left[ -\frac{\tau }{\left( \tau +1\right) ^{m+1}}-\frac{\tau }{%
\left( \tau +1\right) ^{m+1}}+\frac{\tau ^{2}}{\left( \tau +1\right) ^{m+2}}+%
\frac{\tau }{\left( \tau +1\right) ^{m+2}}+\frac{\tau }{\left( \tau
+1\right) ^{m+2}}\right] \right. \\
&&\left. +\frac{4}{\left( 2\tau +1\right) ^{\frac{m}{2}+1}}trace\left( 
\boldsymbol{\Sigma }\left( \boldsymbol{\theta }\right) ^{-1}\left( \frac{%
\partial \boldsymbol{\mu }\left( \boldsymbol{\theta }\right) }{\partial
\theta _{i}}\right) ^{T}\frac{\partial \boldsymbol{\mu }\left( \boldsymbol{%
\theta }\right) }{\partial \theta _{j}}\right) \right. \\
&&\left. +\frac{2}{\left( 2\tau +1\right) ^{\frac{m}{2}+2}}trace\left( 
\boldsymbol{\Sigma }\left( \boldsymbol{\theta }\right) ^{-1}\dfrac{\partial 
\boldsymbol{\Sigma }\left( \boldsymbol{\theta }\right) }{\partial \theta _{i}%
}\boldsymbol{\Sigma }\left( \boldsymbol{\theta }\right) ^{-1}\dfrac{\partial 
\boldsymbol{\Sigma }\left( \boldsymbol{\theta }\right) }{\partial \theta _{j}%
}\right) \right \} .
\end{eqnarray*}%
Therefore, 
\begin{eqnarray*}
E\left[ Y_{\tau }^{i}(\boldsymbol{\theta })Y_{\tau }^{j}(\boldsymbol{\theta }%
)\right] &=&\left( \frac{\tau +1}{\tau \left( 2\pi \right) ^{m\tau /2}}%
\right) ^{2}\left( \frac{\tau }{2}\right) ^{2}\left \vert \boldsymbol{\Sigma 
}\left( \boldsymbol{\theta }\right) \right \vert ^{-\tau } \\
&&\left \{ \frac{4\tau ^{2}}{\left( 2\tau +1\right) ^{\frac{m}{2}+2}}%
trace\left( \boldsymbol{\Sigma }\left( \boldsymbol{\theta }\right) ^{-1}%
\dfrac{\partial \boldsymbol{\Sigma }\left( \boldsymbol{\theta }\right) }{%
\partial \theta _{i}}\right) trace\left( \boldsymbol{\Sigma }\left( 
\boldsymbol{\theta }\right) ^{-1}\dfrac{\partial \boldsymbol{\Sigma }\left( 
\boldsymbol{\theta }\right) }{\partial \theta _{j}}\right) \right. \\
&&\left. -\frac{\tau ^{2}}{\left( 1+\tau \right) ^{m+2}}trace\left( 
\boldsymbol{\Sigma }\left( \boldsymbol{\theta }\right) ^{-1}\dfrac{\partial 
\boldsymbol{\Sigma }\left( \boldsymbol{\theta }\right) }{\partial \theta _{i}%
}\right) trace\left( \boldsymbol{\Sigma }\left( \boldsymbol{\theta }\right)
^{-1}\dfrac{\partial \boldsymbol{\Sigma }\left( \boldsymbol{\theta }\right) 
}{\partial \theta _{j}}\right) \right. \\
&&+\left. 4\frac{4}{\left( 2\tau +1\right) ^{\frac{m}{2}+1}}trace\left( 
\boldsymbol{\Sigma }\left( \boldsymbol{\theta }\right) ^{-1}\left( \frac{%
\partial \boldsymbol{\mu }\left( \boldsymbol{\theta }\right) }{\partial
\theta _{i}}\right) ^{T}\frac{\partial \boldsymbol{\mu }\left( \boldsymbol{%
\theta }\right) }{\partial \theta _{j}}\right) \right. \\
&&\left. +\frac{2}{\left( 2\tau +1\right) ^{\frac{m}{2}+2}}trace\left( 
\boldsymbol{\Sigma }\left( \boldsymbol{\theta }\right) ^{-1}\dfrac{\partial 
\boldsymbol{\Sigma }\left( \boldsymbol{\theta }\right) }{\partial \theta _{i}%
}\boldsymbol{\Sigma }\left( \boldsymbol{\theta }\right) ^{-1}\dfrac{\partial 
\boldsymbol{\Sigma }\left( \boldsymbol{\theta }\right) }{\partial \theta _{j}%
}\right) \right \} .
\end{eqnarray*}%
Finally,%
\begin{eqnarray*}
E\left[ Y_{\tau }^{i}(\boldsymbol{\theta })Y_{\tau }^{j}(\boldsymbol{\theta }%
)\right] &=&\left( \tau +1\right) ^{2}\left \{ \left( \frac{1}{\left( 2\pi
\right) ^{m\tau /2}\left \vert \boldsymbol{\Sigma }\left( \boldsymbol{\theta 
}\right) \right \vert ^{1/2}}\right) ^{2\tau }\frac{1}{\left( 2\tau
+1\right) ^{\frac{m}{2}+2}}\right. \\
&&\left. \left( \Delta _{2\tau }^{i}\Delta _{2\tau }^{j}+\left( 2\tau
+1\right) trace\left( \boldsymbol{\Sigma }\left( \boldsymbol{\theta }\right)
^{-1}\left( \frac{\partial \boldsymbol{\mu }\left( \boldsymbol{\theta }%
\right) }{\partial \theta _{i}}\right) ^{T}\frac{\partial \boldsymbol{\mu }%
\left( \boldsymbol{\theta }\right) }{\partial \theta _{j}}\right) \right.
\right. \\
&&\left. \left. +\frac{1}{2}trace\left( \boldsymbol{\Sigma }\left( 
\boldsymbol{\theta }\right) ^{-1}\dfrac{\partial \boldsymbol{\Sigma }\left( 
\boldsymbol{\theta }\right) }{\partial \theta _{i}}\boldsymbol{\Sigma }%
\left( \boldsymbol{\theta }\right) ^{-1}\dfrac{\partial \boldsymbol{\Sigma }%
\left( \boldsymbol{\theta }\right) }{\partial \theta _{j}}\right) \right)
\right. \\
&&-\left. \left( \frac{1}{\left( 2\pi \right) ^{m\tau /2}\left \vert 
\boldsymbol{\Sigma }\left( \boldsymbol{\theta }\right) \right \vert ^{1/2}}%
\right) ^{2\tau }\frac{1}{\left( 1+\tau \right) ^{m+2}}\Delta _{\tau
}^{i}\Delta _{\tau }^{j}\right \} \\
&=&\left( \tau +1\right) ^{2}K_{\tau }^{ij}\left( \boldsymbol{\theta }\right)
\end{eqnarray*}%
where $K_{\tau }^{ij}\left( \boldsymbol{\theta }\right) $ was defined in (%
\ref{103aaa}).

Then%
\begin{equation*}
\sqrt{n}\frac{\partial }{\partial \boldsymbol{\theta }}H_{n}(\boldsymbol{%
\theta })\underset{n\longrightarrow \infty }{\overset{\mathcal{L}}{%
\longrightarrow }}\mathcal{N}\left( \boldsymbol{0},\left( \tau +1\right) ^{2}%
\boldsymbol{K}_{\boldsymbol{\tau }}\left( \boldsymbol{\theta }\right) \right)
\end{equation*}%
and 
\begin{equation*}
\sqrt{n}\left( \frac{1}{\tau +1}\frac{\partial }{\partial \boldsymbol{\theta 
}}H_{n}(\boldsymbol{\theta })\right) \underset{n\longrightarrow \infty }{%
\overset{\mathcal{L}}{\longrightarrow }}\mathcal{N}\left( \boldsymbol{0},%
\boldsymbol{K}_{\boldsymbol{\tau }}\left( \boldsymbol{\theta }\right)
\right) .
\end{equation*}

\subsection{Appendix B (Proof of Proposition \protect\ref{Proposition2})}

We have,%
\begin{eqnarray*}
\frac{\partial }{\partial \theta _{i}}H_{n}^{\tau }(\boldsymbol{\theta })
&=&-a\frac{\tau }{2}\left \vert \boldsymbol{\Sigma }\left( \boldsymbol{%
\theta }\right) \right \vert ^{-\tau /2}trace\left( \boldsymbol{\Sigma }%
\left( \boldsymbol{\theta }\right) ^{-1}\dfrac{\partial \boldsymbol{\Sigma }%
\left( \boldsymbol{\theta }\right) }{\partial \theta _{i}}\right) \\
&&\left[ \frac{1}{n}\tsum \limits_{i=1}^{n}\exp \left \{ -\frac{\tau }{2}%
\left( \boldsymbol{y}_{i}-\boldsymbol{\mu }\left( \boldsymbol{\theta }%
\right) \right) ^{T}\boldsymbol{\Sigma }\left( \boldsymbol{\theta }\right)
^{-1}\left( \boldsymbol{y}_{i}-\boldsymbol{\mu }\left( \boldsymbol{\theta }%
\right) \right) \right \} -b\right] \\
&&+a\text{ }\left \vert \boldsymbol{\Sigma }\left( \boldsymbol{\theta }%
\right) \right \vert ^{-\tau /2}\text{ }\left[ \frac{1}{n}\tsum
\limits_{i=1}^{n}\exp \left \{ -\frac{\tau }{2}\left( \boldsymbol{y}_{i}-%
\boldsymbol{\mu }\left( \boldsymbol{\theta }\right) \right) ^{T}\boldsymbol{%
\Sigma }\left( \boldsymbol{\theta }\right) ^{-1}\left( \boldsymbol{y}_{i}-%
\boldsymbol{\mu }\left( \boldsymbol{\theta }\right) \right) \right \} \left(
-\frac{\tau }{2}\right) \right. \\
&&\left( 2\left( \frac{\partial \boldsymbol{\mu }\left( \boldsymbol{\theta }%
\right) }{\partial \theta _{i}}\right) ^{T}\boldsymbol{\Sigma }\left( 
\boldsymbol{\theta }\right) ^{-1}\left( \boldsymbol{y}_{i}-\boldsymbol{\mu }%
\left( \boldsymbol{\theta }\right) \right) \right. \\
&&\left. \left. -\left( \boldsymbol{y}_{i}-\boldsymbol{\mu }\left( 
\boldsymbol{\theta }\right) \right) ^{T}\left( \boldsymbol{\Sigma }\left( 
\boldsymbol{\theta }\right) ^{-1}\dfrac{\partial \boldsymbol{\Sigma }\left( 
\boldsymbol{\theta }\right) }{\partial \theta _{j}}\boldsymbol{\Sigma }%
\left( \boldsymbol{\theta }\right) ^{-1}\right) \left( \boldsymbol{y}_{i}-%
\boldsymbol{\mu }\left( \boldsymbol{\theta }\right) \right) \right) \right]
\\
&=&-a\frac{\tau }{2}\left \vert \boldsymbol{\Sigma }\left( \boldsymbol{%
\theta }\right) \right \vert ^{-\tau /2}trace\left( \boldsymbol{\Sigma }%
\left( \boldsymbol{\theta }\right) ^{-1}\dfrac{\partial \boldsymbol{\Sigma }%
\left( \boldsymbol{\theta }\right) }{\partial \theta _{i}}\right) \\
&&\left[ \frac{1}{n}\tsum \limits_{i=1}^{n}\exp \left \{ -\frac{\tau }{2}%
\left( \boldsymbol{y}_{i}-\boldsymbol{\mu }\left( \boldsymbol{\theta }%
\right) \right) ^{T}\boldsymbol{\Sigma }\left( \boldsymbol{\theta }\right)
^{-1}\left( \boldsymbol{y}_{i}-\boldsymbol{\mu }\left( \boldsymbol{\theta }%
\right) \right) \right \} -b\right] \\
&&+a\frac{\tau }{2}\left \vert \boldsymbol{\Sigma }\left( \boldsymbol{\theta 
}\right) \right \vert ^{-\tau /2}\frac{\tau }{2}\left[ \frac{1}{n}\tsum
\limits_{i=1}^{n}\exp \left \{ -\frac{\tau }{2}\left( \boldsymbol{y}_{i}-%
\boldsymbol{\mu }\left( \boldsymbol{\theta }\right) \right) ^{T}\boldsymbol{%
\Sigma }\left( \boldsymbol{\theta }\right) ^{-1}\left( \boldsymbol{y}_{i}-%
\boldsymbol{\mu }\left( \boldsymbol{\theta }\right) \right) \right \} \right.
\\
&&\left. \left( 2\left( \frac{\partial \boldsymbol{\mu }\left( \boldsymbol{%
\theta }\right) }{\partial \theta _{i}}\right) ^{T}\boldsymbol{\Sigma }%
\left( \boldsymbol{\theta }\right) ^{-1}\left( \boldsymbol{y}_{i}-%
\boldsymbol{\mu }\left( \boldsymbol{\theta }\right) \right) \right. \right.
\\
&&\left. \left. \left( \boldsymbol{y}_{i}-\boldsymbol{\mu }\left( 
\boldsymbol{\theta }\right) \right) ^{T}\left( \boldsymbol{\Sigma }\left( 
\boldsymbol{\theta }\right) ^{-1}\dfrac{\partial \boldsymbol{\Sigma }\left( 
\boldsymbol{\theta }\right) }{\partial \theta _{j}}\boldsymbol{\Sigma }%
\left( \boldsymbol{\theta }\right) ^{-1}\right) \left( \boldsymbol{y}_{i}-%
\boldsymbol{\mu }\left( \boldsymbol{\theta }\right) \right) \right) \right] .
\end{eqnarray*}

Therefore, 
\begin{equation*}
\frac{\partial ^{2}}{\partial \theta _{i}\partial \theta _{j}}H_{n}^{\tau }(%
\boldsymbol{\theta })=\frac{\partial }{\partial \theta _{j}}L_{1}^{\tau }(%
\boldsymbol{\theta })+\frac{\partial }{\partial \theta _{j}}L_{2}^{\tau }(%
\boldsymbol{\theta })
\end{equation*}%
being,

\begin{eqnarray*}
L_{1}^{\tau }(\boldsymbol{\theta }) &=&-a\frac{\tau }{2}\left \vert 
\boldsymbol{\Sigma }\left( \boldsymbol{\theta }\right) \right \vert ^{-\tau
/2}trace\left( \boldsymbol{\Sigma }\left( \boldsymbol{\theta }\right) ^{-1}%
\dfrac{\partial \boldsymbol{\Sigma }\left( \boldsymbol{\theta }\right) }{%
\partial \theta _{i}}\right) \\
&&\left[ \frac{1}{n}\tsum \limits_{i=1}^{n}\exp \left \{ -\frac{\tau }{2}%
\left( \boldsymbol{y}_{i}-\boldsymbol{\mu }\left( \boldsymbol{\theta }%
\right) \right) ^{T}\boldsymbol{\Sigma }\left( \boldsymbol{\theta }\right)
^{-1}\left( \boldsymbol{y}_{i}-\boldsymbol{\mu }\left( \boldsymbol{\theta }%
\right) \right) \right \} -b\right]
\end{eqnarray*}%
and 
\begin{eqnarray*}
L_{2}^{\tau }(\boldsymbol{\theta }) &=&a\frac{\tau }{2}\left \vert 
\boldsymbol{\Sigma }\left( \boldsymbol{\theta }\right) \right \vert ^{-\tau
/2}\frac{\tau }{2}\left[ \frac{1}{n}\tsum \limits_{i=1}^{n}\exp \left \{ -%
\frac{\tau }{2}\left( \boldsymbol{y}_{i}-\boldsymbol{\mu }\left( \boldsymbol{%
\theta }\right) \right) ^{T}\boldsymbol{\Sigma }\left( \boldsymbol{\theta }%
\right) ^{-1}\left( \boldsymbol{y}_{i}-\boldsymbol{\mu }\left( \boldsymbol{%
\theta }\right) \right) \right \} \right. \\
&&\left. \left( 2\left( \frac{\partial \boldsymbol{\mu }\left( \boldsymbol{%
\theta }\right) }{\partial \theta _{i}}\right) ^{T}\boldsymbol{\Sigma }%
\left( \boldsymbol{\theta }\right) ^{-1}\left( \boldsymbol{y}_{i}-%
\boldsymbol{\mu }\left( \boldsymbol{\theta }\right) \right) \right. \right.
\\
&&\left. \left. \left( \boldsymbol{y}_{i}-\boldsymbol{\mu }\left( 
\boldsymbol{\theta }\right) \right) ^{T}\left( \boldsymbol{\Sigma }\left( 
\boldsymbol{\theta }\right) ^{-1}\dfrac{\partial \boldsymbol{\Sigma }\left( 
\boldsymbol{\theta }\right) }{\partial \theta _{i}}\boldsymbol{\Sigma }%
\left( \boldsymbol{\theta }\right) ^{-1}\right) \left( \boldsymbol{y}_{i}-%
\boldsymbol{\mu }\left( \boldsymbol{\theta }\right) \right) \right) \right] .
\end{eqnarray*}%
We are going to get $\frac{\partial }{\partial \theta _{j}}L_{1}^{\tau }(%
\boldsymbol{\theta }).$%
\begin{eqnarray*}
\frac{\partial }{\partial \theta _{j}}L_{1}^{\tau }(\boldsymbol{\theta })
&=&-a\frac{\tau }{2}\left( -\frac{\tau }{2}\right) \left \vert \boldsymbol{%
\Sigma }\left( \boldsymbol{\theta }\right) \right \vert ^{-\tau
/2}trace\left( \boldsymbol{\Sigma }\left( \boldsymbol{\theta }\right) ^{-1}%
\dfrac{\partial \boldsymbol{\Sigma }\left( \boldsymbol{\theta }\right) }{%
\partial \theta _{j}}\right) trace\left( \boldsymbol{\Sigma }\left( 
\boldsymbol{\theta }\right) ^{-1}\dfrac{\partial \boldsymbol{\Sigma }\left( 
\boldsymbol{\theta }\right) }{\partial \theta _{i}}\right) \\
&&\left[ \frac{1}{n}\tsum \limits_{i=1}^{n}\exp \left \{ -\frac{\tau }{2}%
\left( \boldsymbol{y}_{i}-\boldsymbol{\mu }\left( \boldsymbol{\theta }%
\right) \right) ^{T}\boldsymbol{\Sigma }\left( \boldsymbol{\theta }\right)
^{-1}\left( \boldsymbol{y}_{i}-\boldsymbol{\mu }\left( \boldsymbol{\theta }%
\right) \right) \right \} -b\right] \\
&&-a\frac{\tau }{2}\left \vert \boldsymbol{\Sigma }\left( \boldsymbol{\theta 
}\right) \right \vert ^{-\tau /2}trace\left( -\boldsymbol{\Sigma }\left( 
\boldsymbol{\theta }\right) ^{-1}\dfrac{\partial \boldsymbol{\Sigma }\left( 
\boldsymbol{\theta }\right) }{\partial \theta _{j}}\boldsymbol{\Sigma }%
\left( \boldsymbol{\theta }\right) ^{-1}\dfrac{\partial \boldsymbol{\Sigma }%
\left( \boldsymbol{\theta }\right) }{\partial \theta _{i}}+\boldsymbol{%
\Sigma }\left( \boldsymbol{\theta }\right) ^{-1}\dfrac{\partial ^{2}%
\boldsymbol{\Sigma }\left( \boldsymbol{\theta }\right) }{\partial \theta
_{i}\partial \theta _{j}}\right) \\
&&\left[ \frac{1}{n}\tsum \limits_{i=1}^{n}\exp \left \{ -\frac{\tau }{2}%
\left( \boldsymbol{y}_{i}-\boldsymbol{\mu }\left( \boldsymbol{\theta }%
\right) \right) ^{T}\boldsymbol{\Sigma }\left( \boldsymbol{\theta }\right)
^{-1}\left( \boldsymbol{y}_{i}-\boldsymbol{\mu }\left( \boldsymbol{\theta }%
\right) \right) \right \} -b\right] \\
&&-a\frac{\tau }{2}\left \vert \boldsymbol{\Sigma }\left( \boldsymbol{\theta 
}\right) \right \vert ^{-\tau /2}trace\left( \boldsymbol{\Sigma }\left( 
\boldsymbol{\theta }\right) ^{-1}\dfrac{\partial \boldsymbol{\Sigma }\left( 
\boldsymbol{\theta }\right) }{\partial \theta _{i}}\right) \\
&&\left[ \frac{1}{n}\tsum \limits_{i=1}^{n}\exp \left \{ -\frac{\tau }{2}%
\left( \boldsymbol{y}_{i}-\boldsymbol{\mu }\left( \boldsymbol{\theta }%
\right) \right) ^{T}\boldsymbol{\Sigma }\left( \boldsymbol{\theta }\right)
^{-1}\left( \boldsymbol{y}_{i}-\boldsymbol{\mu }\left( \boldsymbol{\theta }%
\right) \right) \right \} \left( -\frac{\tau }{2}\right) \right. \\
&&\left. \left( -2\left( \frac{\partial \boldsymbol{\mu }\left( \boldsymbol{%
\theta }\right) }{\partial \theta _{i}}\right) ^{T}\boldsymbol{\Sigma }%
\left( \boldsymbol{\theta }\right) ^{-1}\left( \boldsymbol{y}_{i}-%
\boldsymbol{\mu }\left( \boldsymbol{\theta }\right) \right) \right. \right.
\\
&&\left. \left. -\left( \boldsymbol{y}_{i}-\boldsymbol{\mu }\left( 
\boldsymbol{\theta }\right) \right) ^{T}\left( \boldsymbol{\Sigma }\left( 
\boldsymbol{\theta }\right) ^{-1}\dfrac{\partial \boldsymbol{\Sigma }\left( 
\boldsymbol{\theta }\right) }{\partial \theta _{j}}\boldsymbol{\Sigma }%
\left( \boldsymbol{\theta }\right) ^{-1}\right) \left( \boldsymbol{y}_{i}-%
\boldsymbol{\mu }\left( \boldsymbol{\theta }\right) \right) \right) \right]
\\
&=&D_{1}+D_{2}+D_{3},
\end{eqnarray*}%
being 
\begin{eqnarray*}
D_{1} &=&a\frac{\tau }{4}^{2}\left \vert \boldsymbol{\Sigma }\left( 
\boldsymbol{\theta }\right) \right \vert ^{-\tau /2}trace\left( \boldsymbol{%
\Sigma }\left( \boldsymbol{\theta }\right) ^{-1}\dfrac{\partial \boldsymbol{%
\Sigma }\left( \boldsymbol{\theta }\right) }{\partial \theta _{i}}\right)
trace\left( \boldsymbol{\Sigma }\left( \boldsymbol{\theta }\right) ^{-1}%
\dfrac{\partial \boldsymbol{\Sigma }\left( \boldsymbol{\theta }\right) }{%
\partial \theta _{j}}\right) \\
&&\left[ \frac{1}{n}\tsum \limits_{i=1}^{n}\exp \left \{ -\frac{\tau }{2}%
\left( \boldsymbol{y}_{i}-\boldsymbol{\mu }\left( \boldsymbol{\theta }%
\right) \right) ^{T}\boldsymbol{\Sigma }\left( \boldsymbol{\theta }\right)
^{-1}\left( \boldsymbol{y}_{i}-\boldsymbol{\mu }\left( \boldsymbol{\theta }%
\right) \right) \right \} -b\right]
\end{eqnarray*}%
\begin{eqnarray*}
D_{2} &=&-a\frac{\tau }{2}\left \vert \boldsymbol{\Sigma }\left( \boldsymbol{%
\theta }\right) \right \vert ^{-\tau /2}trace\left( -\boldsymbol{\Sigma }%
\left( \boldsymbol{\theta }\right) ^{-1}\dfrac{\partial \boldsymbol{\Sigma }%
\left( \boldsymbol{\theta }\right) }{\partial \theta _{j}}\boldsymbol{\Sigma 
}\left( \boldsymbol{\theta }\right) ^{-1}\dfrac{\partial \boldsymbol{\Sigma }%
\left( \boldsymbol{\theta }\right) }{\partial \theta _{i}}\right. \\
&&+\left. \boldsymbol{\Sigma }\left( \boldsymbol{\theta }\right) ^{-1}\dfrac{%
\partial ^{2}\boldsymbol{\Sigma }\left( \boldsymbol{\theta }\right) }{%
\partial \theta _{i}\partial \theta _{j}}\right) \\
&&\left[ \frac{1}{n}\tsum \limits_{i=1}^{n}\exp \left \{ -\frac{\tau }{2}%
\left( \boldsymbol{y}_{i}-\boldsymbol{\mu }\left( \boldsymbol{\theta }%
\right) \right) ^{T}\boldsymbol{\Sigma }\left( \boldsymbol{\theta }\right)
^{-1}\left( \boldsymbol{y}_{i}-\boldsymbol{\mu }\left( \boldsymbol{\theta }%
\right) \right) \right \} -b\right]
\end{eqnarray*}%
and 
\begin{eqnarray*}
D_{3} &=&a\frac{\tau ^{2}}{4}\left \vert \boldsymbol{\Sigma }\left( 
\boldsymbol{\theta }\right) \right \vert ^{-\tau /2}trace\left( \boldsymbol{%
\Sigma }\left( \boldsymbol{\theta }\right) ^{-1}\dfrac{\partial \boldsymbol{%
\Sigma }\left( \boldsymbol{\theta }\right) }{\partial \theta _{i}}\right) \\
&&\left[ \frac{1}{n}\tsum \limits_{i=1}^{n}\exp \left \{ -\frac{\tau }{2}%
\left( \boldsymbol{y}_{i}-\boldsymbol{\mu }\left( \boldsymbol{\theta }%
\right) \right) ^{T}\boldsymbol{\Sigma }\left( \boldsymbol{\theta }\right)
^{-1}\left( \boldsymbol{y}_{i}-\boldsymbol{\mu }\left( \boldsymbol{\theta }%
\right) \right) \right \} \right. \\
&&\left. \left( -2\left( \frac{\partial \boldsymbol{\mu }\left( \boldsymbol{%
\theta }\right) }{\partial \theta _{i}}\right) ^{T}\boldsymbol{\Sigma }%
\left( \boldsymbol{\theta }\right) ^{-1}\left( \boldsymbol{y}_{i}-%
\boldsymbol{\mu }\left( \boldsymbol{\theta }\right) \right) \right. \right.
\\
&&\left. \left. -\left( \boldsymbol{y}_{i}-\boldsymbol{\mu }\left( 
\boldsymbol{\theta }\right) \right) ^{T}\left( \boldsymbol{\Sigma }\left( 
\boldsymbol{\theta }\right) ^{-1}\dfrac{\partial \boldsymbol{\Sigma }\left( 
\boldsymbol{\theta }\right) }{\partial \theta _{j}}\boldsymbol{\Sigma }%
\left( \boldsymbol{\theta }\right) ^{-1}\right) \left( \boldsymbol{y}_{i}-%
\boldsymbol{\mu }\left( \boldsymbol{\theta }\right) \right) \right) \right]
\end{eqnarray*}%
Now we are going to see some result that will be important in order to get
the convergence in probability of $D_{1},D_{2}$ and $D_{3}.$

\begin{remark}
We have,%
\begin{equation*}
l=\frac{1}{n}\tsum \limits_{i=1}^{n}\exp \left \{ -\frac{\tau }{2}\left( 
\boldsymbol{Y}_{i}-\boldsymbol{\mu }\left( \boldsymbol{\theta }\right)
\right) ^{T}\boldsymbol{\Sigma }\left( \boldsymbol{\theta }\right)
^{-1}\left( \boldsymbol{Y}_{i}-\boldsymbol{\mu }\left( \boldsymbol{\theta }%
\right) \right) \right \} \underset{n\longrightarrow \infty }{\overset{%
\mathcal{P}}{\longrightarrow }}\frac{1}{\left( 1+\tau \right) ^{m/2}}
\end{equation*}
\end{remark}

\begin{proof}
It is clear that 
\begin{eqnarray*}
&&l\underset{n\longrightarrow \infty }{\overset{\mathcal{P}}{\longrightarrow 
}}E_{\mathcal{N}\left( \boldsymbol{\mu }\left( \boldsymbol{\theta }\right) ,%
\boldsymbol{\Sigma }\left( \boldsymbol{\theta }\right) \right) }\left[ \exp
\left \{ -\frac{\tau }{2}\left( \boldsymbol{Y}-\boldsymbol{\mu }\left( 
\boldsymbol{\theta }\right) \right) ^{T}\left( \frac{\boldsymbol{\Sigma }%
\left( \boldsymbol{\theta }\right) }{\tau }\right) \left( \boldsymbol{Y}-%
\boldsymbol{\mu }\left( \boldsymbol{\theta }\right) \right) \right \} \right]
\\
&=&\dint \exp \left \{ -\frac{\tau }{2}\left( \boldsymbol{Y}-\boldsymbol{\mu 
}\left( \boldsymbol{\theta }\right) \right) ^{T}\left( \frac{\boldsymbol{%
\Sigma }\left( \boldsymbol{\theta }\right) }{\tau }\right) \left( 
\boldsymbol{Y}-\boldsymbol{\mu }\left( \boldsymbol{\theta }\right) \right)
\right \} f_{\mathcal{N}\left( \boldsymbol{\mu }\left( \boldsymbol{\theta }%
\right) ,\boldsymbol{\Sigma }\left( \boldsymbol{\theta }\right) \right)
}\left( \boldsymbol{y}\right) d\boldsymbol{y} \\
&=&\frac{1}{\left( 1+\tau \right) ^{m/2}}\dint \frac{1}{\left( 2\pi \right)
^{m/2}}\frac{1}{\left \vert \frac{\boldsymbol{\Sigma }\left( \boldsymbol{%
\theta }\right) }{\tau +1}\right \vert ^{1/2}}\exp \left \{ -\frac{\tau +1}{2%
}\left( \boldsymbol{y}-\boldsymbol{\mu }\left( \boldsymbol{\theta }\right)
\right) ^{T}\left( \frac{\boldsymbol{\Sigma }\left( \boldsymbol{\theta }%
\right) }{\tau }\right) \left( \boldsymbol{y}-\boldsymbol{\mu }\left( 
\boldsymbol{\theta }\right) \right) \right \} d\boldsymbol{y} \\
&=&\frac{1}{\left( 1+\tau \right) ^{m/2}}.
\end{eqnarray*}
\end{proof}

\begin{remark}
We have,%
\begin{equation*}
m=\frac{1}{n}\tsum \limits_{i=1}^{n}\exp \left \{ -\frac{\tau }{2}\left( 
\boldsymbol{Y}_{i}-\boldsymbol{\mu }\left( \boldsymbol{\theta }\right)
\right) ^{T}\boldsymbol{\Sigma }\left( \boldsymbol{\theta }\right)
^{-1}\left( \boldsymbol{Y}_{i}-\boldsymbol{\mu }\left( \boldsymbol{\theta }%
\right) \right) \right \} -b\underset{n\longrightarrow \infty }{\overset{%
\mathcal{P}}{\longrightarrow }}\frac{1}{\left( 1+\tau \right) ^{\frac{m}{2}%
+1}}
\end{equation*}
\end{remark}

\begin{proof}
By result the previous remark 
\begin{equation*}
m\underset{n\longrightarrow \infty }{\overset{\mathcal{P}}{\longrightarrow }}%
\frac{1}{\left( 1+\tau \right) ^{m/2}}-\frac{\tau }{\left( 1+\tau \right) ^{%
\frac{m}{2}+1}}=\frac{1}{\left( 1+\tau \right) ^{\frac{m}{2}+1}}.
\end{equation*}
\end{proof}

\begin{remark}
We denote,%
\begin{equation*}
n=\frac{1}{n}\tsum \limits_{i=1}^{n}\exp \left \{ -\frac{\tau }{2}\left( 
\boldsymbol{Y}_{i}-\boldsymbol{\mu }\left( \boldsymbol{\theta }\right)
\right) ^{T}\boldsymbol{\Sigma }\left( \boldsymbol{\theta }\right)
^{-1}\left( \boldsymbol{Y}_{i}-\boldsymbol{\mu }\left( \boldsymbol{\theta }%
\right) \right) \right \} \left( \boldsymbol{Y}_{i}-\boldsymbol{\mu }\left( 
\boldsymbol{\theta }\right) \right) ^{T}\boldsymbol{A}\left( \boldsymbol{Y}%
_{i}-\boldsymbol{\mu }\left( \boldsymbol{\theta }\right) \right)
\end{equation*}%
and we have,%
\begin{equation*}
n\underset{n\longrightarrow \infty }{\overset{\mathcal{P}}{\longrightarrow }}%
\frac{trace\left( \boldsymbol{A\Sigma }\left( \boldsymbol{\theta }\right)
\right) }{\left( 1+\tau \right) ^{\frac{m}{2}+1}}.
\end{equation*}
\end{remark}

\begin{proof}
It is clear that,

\begin{eqnarray*}
&&n\underset{n\longrightarrow \infty }{\overset{\mathcal{P}}{\longrightarrow 
}}E_{\mathcal{N}\left( \boldsymbol{\mu }\left( \boldsymbol{\theta }\right) ,%
\boldsymbol{\Sigma }\left( \boldsymbol{\theta }\right) \right) }\left[ \exp
\left \{ -\frac{1}{2}\left( \boldsymbol{Y}-\boldsymbol{\mu }\left( 
\boldsymbol{\theta }\right) \right) ^{T}\left( \frac{\boldsymbol{\Sigma }%
\left( \boldsymbol{\theta }\right) }{\tau }\right) ^{-1}\left( \boldsymbol{Y}%
-\boldsymbol{\mu }\left( \boldsymbol{\theta }\right) \right) \right \}
\right. \\
&&\left. \left. \left( \boldsymbol{Y}-\boldsymbol{\mu }\left( \boldsymbol{%
\theta }\right) \right) ^{T}\boldsymbol{A}\left( \boldsymbol{Y}-\boldsymbol{%
\mu }\left( \boldsymbol{\theta }\right) \right) \right. \right] \\
&=&\frac{1}{\left( 1+\tau \right) ^{m/2}}\dint \frac{1}{\left( 2\pi \right)
^{m/2}}\frac{1}{\left \vert \frac{\boldsymbol{\Sigma }\left( \boldsymbol{%
\theta }\right) }{\tau +1}\right \vert ^{1/2}}\exp \left \{ -\frac{1}{2}%
\left( \boldsymbol{y}-\boldsymbol{\mu }\left( \boldsymbol{\theta }\right)
\right) ^{T}\left( \frac{\boldsymbol{\Sigma }\left( \boldsymbol{\theta }%
\right) }{\tau +1}\right) ^{-1}\left( \boldsymbol{y}-\boldsymbol{\mu }\left( 
\boldsymbol{\theta }\right) \right) \right \} \\
&&\left( \boldsymbol{y}-\boldsymbol{\mu }\left( \boldsymbol{\theta }\right)
\right) ^{T}\boldsymbol{A}\left( \boldsymbol{y}-\boldsymbol{\mu }\left( 
\boldsymbol{\theta }\right) \right) d\boldsymbol{y} \\
&=&\frac{1}{\left( 1+\tau \right) ^{m/2}}E_{\mathcal{N}\left( \boldsymbol{%
\mu }\left( \boldsymbol{\theta }\right) ,\frac{\boldsymbol{\Sigma }\left( 
\boldsymbol{\theta }\right) }{1+\tau }\right) }\left[ \left( \boldsymbol{Y}-%
\boldsymbol{\mu }\left( \boldsymbol{\theta }\right) \right) ^{T}\boldsymbol{A%
}\left( \boldsymbol{Y}-\boldsymbol{\mu }\left( \boldsymbol{\theta }\right)
\right) \right] \\
&=&\frac{1}{\left( 1+\tau \right) ^{\frac{m}{2}+1}}trace\left( \boldsymbol{%
A\Sigma }\left( \boldsymbol{\theta }\right) \right) .
\end{eqnarray*}
\end{proof}

Based on the previous results we have in relation to $D_{1,}$ 
\begin{equation*}
D_{1}\underset{n\longrightarrow \infty }{\overset{\mathcal{P}}{%
\longrightarrow }}\frac{\tau }{4}\frac{\left \vert \boldsymbol{\Sigma }%
\left( \boldsymbol{\theta }\right) \right \vert ^{-\frac{\tau }{2}}}{\left(
2\pi \right) ^{m\tau /2}\left( 1+\tau \right) ^{m/2}}trace\left( \boldsymbol{%
\Sigma }\left( \boldsymbol{\theta }\right) ^{-1}\dfrac{\partial \boldsymbol{%
\Sigma }\left( \boldsymbol{\theta }\right) }{\partial \theta _{i}}\right)
trace\left( \boldsymbol{\Sigma }\left( \boldsymbol{\theta }\right) ^{-1}%
\dfrac{\partial \boldsymbol{\Sigma }\left( \boldsymbol{\theta }\right) }{%
\partial \theta _{j}}\right) .
\end{equation*}%
With respect to $D_{2,}$ 
\begin{eqnarray*}
&&D_{2}\underset{n\longrightarrow \infty }{\overset{\mathcal{P}}{%
\longrightarrow }}\frac{1}{\left( 2\pi \right) ^{m/2}}\left \vert 
\boldsymbol{\Sigma }\left( \boldsymbol{\theta }\right) \right \vert ^{-\frac{%
\tau }{2}}\frac{1}{2}\frac{1}{\left( 1+\tau \right) ^{m/2}}trace\left( 
\boldsymbol{\Sigma }\left( \boldsymbol{\theta }\right) ^{-1}\dfrac{\partial 
\boldsymbol{\Sigma }\left( \boldsymbol{\theta }\right) }{\partial \theta _{j}%
}\boldsymbol{\Sigma }\left( \boldsymbol{\theta }\right) ^{-1}\dfrac{\partial 
\boldsymbol{\Sigma }\left( \boldsymbol{\theta }\right) }{\partial \theta _{i}%
}\right) \\
&&-\frac{\left \vert \boldsymbol{\Sigma }\left( \boldsymbol{\theta }\right)
\right \vert ^{-\frac{\tau }{2}}}{\left( 2\pi \right) ^{m/2}}\frac{1}{2}%
trace\left( \boldsymbol{\Sigma }\left( \boldsymbol{\theta }\right) ^{-1}%
\dfrac{\partial ^{2}\boldsymbol{\Sigma }\left( \boldsymbol{\theta }\right) }{%
\partial \theta _{j}\partial \theta _{i}}\right) .
\end{eqnarray*}%
In a similar way we get for $D_{3}$ that,%
\begin{equation*}
D_{3}\underset{n\longrightarrow \infty }{\overset{\mathcal{P}}{%
\longrightarrow }}-\frac{\tau }{4}\frac{\left \vert \boldsymbol{\Sigma }%
\left( \boldsymbol{\theta }\right) \right \vert ^{-\frac{\tau }{2}}}{\left(
2\pi \right) ^{m\tau /2}}trace\left( \boldsymbol{\Sigma }\left( \boldsymbol{%
\theta }\right) ^{-1}\dfrac{\partial \boldsymbol{\Sigma }\left( \boldsymbol{%
\theta }\right) }{\partial \theta _{i}}\right) \frac{1}{\left( 1+\tau
\right) ^{m/2}}trace\left( \boldsymbol{\Sigma }\left( \boldsymbol{\theta }%
\right) ^{-1}\dfrac{\partial \boldsymbol{\Sigma }\left( \boldsymbol{\theta }%
\right) }{\partial \theta _{j}}\right) .
\end{equation*}%
Therefore we have%
\begin{eqnarray*}
&&\frac{\partial }{\partial \theta _{j}}L_{1}^{\tau }(\boldsymbol{\theta })%
\underset{n\longrightarrow \infty }{\overset{\mathcal{P}}{\longrightarrow }}%
\frac{1}{2}\frac{\left \vert \boldsymbol{\Sigma }\left( \boldsymbol{\theta }%
\right) \right \vert ^{-\frac{\tau }{2}}}{\left( 2\pi \right) ^{m\tau /2}}%
\frac{1}{\left( 1+\tau \right) ^{m/2}}trace\left( \boldsymbol{\Sigma }\left( 
\boldsymbol{\theta }\right) ^{-1}\dfrac{\partial \boldsymbol{\Sigma }\left( 
\boldsymbol{\theta }\right) }{\partial \theta _{j}}\boldsymbol{\Sigma }%
\left( \boldsymbol{\theta }\right) ^{-1}\dfrac{\partial \boldsymbol{\Sigma }%
\left( \boldsymbol{\theta }\right) }{\partial \theta _{i}}\right) \\
&&-\frac{1}{2}\frac{\left \vert \boldsymbol{\Sigma }\left( \boldsymbol{%
\theta }\right) \right \vert ^{-\frac{\tau }{2}}}{\left( 2\pi \right)
^{m\tau /2}}\frac{1}{\left( 1+\tau \right) ^{m/2}}\left( \boldsymbol{\Sigma }%
\left( \boldsymbol{\theta }\right) ^{-1}\dfrac{\partial ^{2}\boldsymbol{%
\Sigma }\left( \boldsymbol{\theta }\right) }{\partial \theta _{j}\partial
\theta _{i}}\right) .
\end{eqnarray*}

Now we have, 
\begin{eqnarray*}
\frac{\partial }{\partial \theta _{j}}L_{2}^{\tau }(\boldsymbol{\theta })
&=&-a\frac{\tau ^{2}}{4}\left \vert \boldsymbol{\Sigma }\left( \boldsymbol{%
\theta }\right) \right \vert ^{-\frac{\tau }{2}}trace\left( \boldsymbol{%
\Sigma }\left( \boldsymbol{\theta }\right) ^{-1}\dfrac{\partial \boldsymbol{%
\Sigma }\left( \boldsymbol{\theta }\right) }{\partial \theta _{j}}\right) \\
&&\left[ \frac{1}{n}\tsum \limits_{i=1}^{n}\exp \left \{ -\frac{\tau }{2}%
\left( \boldsymbol{y}_{i}-\boldsymbol{\mu }\left( \boldsymbol{\theta }%
\right) \right) ^{T}\boldsymbol{\Sigma }\left( \boldsymbol{\theta }\right)
^{-1}\left( \boldsymbol{y}_{i}-\boldsymbol{\mu }\left( \boldsymbol{\theta }%
\right) \right) \right \} \left( 2\left( \frac{\partial \boldsymbol{\mu }%
\left( \boldsymbol{\theta }\right) }{\partial \theta _{i}}\right)
^{T}\right. \right. \\
&&\left. \left. \boldsymbol{\Sigma }\left( \boldsymbol{\theta }\right)
^{-1}\left( \boldsymbol{y}_{i}-\boldsymbol{\mu }\left( \boldsymbol{\theta }%
\right) \right) +\left( \boldsymbol{y}_{i}-\boldsymbol{\mu }\left( 
\boldsymbol{\theta }\right) \right) ^{T}\left( \boldsymbol{\Sigma }\left( 
\boldsymbol{\theta }\right) ^{-1}\dfrac{\partial \boldsymbol{\Sigma }\left( 
\boldsymbol{\theta }\right) }{\partial \theta _{j}}\boldsymbol{\Sigma }%
\left( \boldsymbol{\theta }\right) ^{-1}\right) \left( \boldsymbol{y}_{i}-%
\boldsymbol{\mu }\left( \boldsymbol{\theta }\right) \right) \right) \right]
\\
&&+a\frac{\tau }{2}\left \vert \boldsymbol{\Sigma }\left( \boldsymbol{\theta 
}\right) \right \vert ^{-\frac{\tau }{2}}\left[ \frac{1}{n}\tsum
\limits_{i=1}^{n}\exp \left \{ -\frac{\tau }{2}\left( \boldsymbol{y}_{i}-%
\boldsymbol{\mu }\left( \boldsymbol{\theta }\right) \right) ^{T}\boldsymbol{%
\Sigma }\left( \boldsymbol{\theta }\right) ^{-1}\left( \boldsymbol{y}_{i}-%
\boldsymbol{\mu }\left( \boldsymbol{\theta }\right) \right) \right \} \right.
\\
&&\left( -\frac{\tau }{2}\right) \left( \frac{\partial }{\partial \theta _{j}%
}\left( \boldsymbol{y}_{i}-\boldsymbol{\mu }\left( \boldsymbol{\theta }%
\right) \right) ^{T}\boldsymbol{\Sigma }\left( \boldsymbol{\theta }\right)
^{-1}\left( \boldsymbol{y}_{i}-\boldsymbol{\mu }\left( \boldsymbol{\theta }%
\right) \right) \right) \left( 2\left( \frac{\partial \boldsymbol{\mu }%
\left( \boldsymbol{\theta }\right) }{\partial \theta _{i}}\right) ^{T}%
\boldsymbol{\Sigma }\left( \boldsymbol{\theta }\right) ^{-1}\left( 
\boldsymbol{y}_{i}-\boldsymbol{\mu }\left( \boldsymbol{\theta }\right)
\right) \right. \\
&&\left. \left. \left( \boldsymbol{y}_{i}-\boldsymbol{\mu }\left( 
\boldsymbol{\theta }\right) \right) ^{T}\left( \boldsymbol{\Sigma }\left( 
\boldsymbol{\theta }\right) ^{-1}\dfrac{\partial \boldsymbol{\Sigma }\left( 
\boldsymbol{\theta }\right) }{\partial \theta _{j}}\boldsymbol{\Sigma }%
\left( \boldsymbol{\theta }\right) ^{-1}\right) \left( \boldsymbol{y}_{i}-%
\boldsymbol{\mu }\left( \boldsymbol{\theta }\right) \right) \right) \right]
\\
&&+a\frac{\tau }{2}\left \vert \boldsymbol{\Sigma }\left( \boldsymbol{\theta 
}\right) \right \vert ^{-\frac{\tau }{2}}\left[ \frac{1}{n}\tsum
\limits_{i=1}^{n}\exp \left \{ -\frac{\tau }{2}\left( \boldsymbol{y}_{i}-%
\boldsymbol{\mu }\left( \boldsymbol{\theta }\right) \right) ^{T}\boldsymbol{%
\Sigma }\left( \boldsymbol{\theta }\right) ^{-1}\left( \boldsymbol{y}_{i}-%
\boldsymbol{\mu }\left( \boldsymbol{\theta }\right) \right) \right \} \right.
\\
&&\left. \frac{\partial }{\partial \theta _{j}}\left( 2\left( \frac{\partial 
\boldsymbol{\mu }\left( \boldsymbol{\theta }\right) }{\partial \theta _{i}}%
\right) ^{T}\boldsymbol{\Sigma }\left( \boldsymbol{\theta }\right)
^{-1}\left( \boldsymbol{y}_{i}-\boldsymbol{\mu }\left( \boldsymbol{\theta }%
\right) \right) \right) \right. \\
&&\left. \left. +\left( \boldsymbol{y}_{i}-\boldsymbol{\mu }\left( 
\boldsymbol{\theta }\right) \right) ^{T}\left( \boldsymbol{\Sigma }\left( 
\boldsymbol{\theta }\right) ^{-1}\dfrac{\partial \boldsymbol{\Sigma }\left( 
\boldsymbol{\theta }\right) }{\partial \theta _{j}}\boldsymbol{\Sigma }%
\left( \boldsymbol{\theta }\right) ^{-1}\right) \left( \boldsymbol{y}_{i}-%
\boldsymbol{\mu }\left( \boldsymbol{\theta }\right) \right) \right) \right]
\\
&=&C_{1}+C_{2}+C_{3}.
\end{eqnarray*}%
Being,

\begin{eqnarray*}
C_{1} &=&-a\frac{\tau ^{2}}{4}\left \vert \boldsymbol{\Sigma }\left( 
\boldsymbol{\theta }\right) \right \vert ^{-\frac{\tau }{2}}trace\left( 
\boldsymbol{\Sigma }\left( \boldsymbol{\theta }\right) ^{-1}\dfrac{\partial 
\boldsymbol{\Sigma }\left( \boldsymbol{\theta }\right) }{\partial \theta _{j}%
}\right) \left[ \frac{1}{n}\tsum \limits_{i=1}^{n}\exp \left \{ -\frac{\tau 
}{2}\left( \boldsymbol{y}_{i}-\boldsymbol{\mu }\left( \boldsymbol{\theta }%
\right) \right) ^{T}\boldsymbol{\Sigma }\left( \boldsymbol{\theta }\right)
^{-1}\left( \boldsymbol{y}_{i}-\boldsymbol{\mu }\left( \boldsymbol{\theta }%
\right) \right) \right \} \right. \\
&&\left. \left( 2\left( \frac{\partial \boldsymbol{\mu }\left( \boldsymbol{%
\theta }\right) }{\partial \theta _{i}}\right) ^{T}\boldsymbol{\Sigma }%
\left( \boldsymbol{\theta }\right) ^{-1}\left( \boldsymbol{y}_{i}-%
\boldsymbol{\mu }\left( \boldsymbol{\theta }\right) \right) \right. \right.
\\
&&\left. \left. \left( \boldsymbol{y}_{i}-\boldsymbol{\mu }\left( 
\boldsymbol{\theta }\right) \right) ^{T}\left( \boldsymbol{\Sigma }\left( 
\boldsymbol{\theta }\right) ^{-1}\dfrac{\partial \boldsymbol{\Sigma }\left( 
\boldsymbol{\theta }\right) }{\partial \theta _{j}}\boldsymbol{\Sigma }%
\left( \boldsymbol{\theta }\right) ^{-1}\right) \left( \boldsymbol{y}_{i}-%
\boldsymbol{\mu }\left( \boldsymbol{\theta }\right) \right) \right) \right] .
\end{eqnarray*}

It is clear that,%
\begin{equation*}
C_{1}\underset{n\longrightarrow \infty }{\overset{\mathcal{P}}{%
\longrightarrow }}-\frac{\tau }{4}\frac{1}{\left( 1+\tau \right) ^{m/2}}%
\frac{\left \vert \boldsymbol{\Sigma }\left( \boldsymbol{\theta }\right)
\right \vert ^{-\frac{\tau }{2}}}{\left( 2\pi \right) ^{m\tau /2}}%
trace\left( \boldsymbol{\Sigma }\left( \boldsymbol{\theta }\right) ^{-1}%
\dfrac{\partial \boldsymbol{\Sigma }\left( \boldsymbol{\theta }\right) }{%
\partial \theta _{i}}\right) trace\left( \boldsymbol{\Sigma }\left( 
\boldsymbol{\theta }\right) ^{-1}\dfrac{\partial \boldsymbol{\Sigma }\left( 
\boldsymbol{\theta }\right) }{\partial \theta _{j}}\right) .
\end{equation*}%
It is immediate to see that 
\begin{eqnarray*}
C_{2} &=&-a\frac{\tau ^{2}}{4}\left \vert \boldsymbol{\Sigma }\left( 
\boldsymbol{\theta }\right) \right \vert ^{-\frac{\tau }{2}}\frac{1}{n}\tsum
\limits_{i=1}^{n}\exp \left \{ -\frac{\tau }{2}\left( \boldsymbol{y}_{i}-%
\boldsymbol{\mu }\left( \boldsymbol{\theta }\right) \right) ^{T}\boldsymbol{%
\Sigma }\left( \boldsymbol{\theta }\right) ^{-1}\left( \boldsymbol{y}_{i}-%
\boldsymbol{\mu }\left( \boldsymbol{\theta }\right) \right) \right \} \left(
S_{1}+S_{2}+S_{3}+S_{4}\right) \\
&=&L_{1}^{\ast }+L_{2}^{\ast }+L_{3}^{\ast }+L_{4}^{\ast }
\end{eqnarray*}%
where

\begin{eqnarray*}
L_{1}^{\ast } &=&-a\frac{\tau ^{2}}{4}\left \vert \boldsymbol{\Sigma }\left( 
\boldsymbol{\theta }\right) \right \vert ^{-\frac{\tau }{2}}\frac{1}{n}\tsum
\limits_{i=1}^{n}\exp \left \{ -\frac{\tau }{2}\left( \boldsymbol{y}_{i}-%
\boldsymbol{\mu }\left( \boldsymbol{\theta }\right) \right) ^{T}\boldsymbol{%
\Sigma }\left( \boldsymbol{\theta }\right) ^{-1}\left( \boldsymbol{y}_{i}-%
\boldsymbol{\mu }\left( \boldsymbol{\theta }\right) \right) \right \} \\
&&\left( -4\left( \boldsymbol{y}_{i}-\boldsymbol{\mu }\left( \boldsymbol{%
\theta }\right) \right) ^{T}\boldsymbol{\Sigma }\left( \boldsymbol{\theta }%
\right) ^{-1}\frac{\partial \boldsymbol{\mu }\left( \boldsymbol{\theta }%
\right) }{\partial \theta _{j}}\left( \frac{\partial \boldsymbol{\mu }\left( 
\boldsymbol{\theta }\right) }{\partial \theta _{i}}\right) ^{T}\boldsymbol{%
\Sigma }\left( \boldsymbol{\theta }\right) ^{-1}\left( \boldsymbol{y}_{i}-%
\boldsymbol{\mu }\left( \boldsymbol{\theta }\right) \right) \right)
\end{eqnarray*}%
and 
\begin{equation*}
L_{1}^{\ast }\underset{n\longrightarrow \infty }{\overset{\mathcal{P}}{%
\longrightarrow }}\frac{\left \vert \boldsymbol{\Sigma }\left( \boldsymbol{%
\theta }\right) \right \vert ^{-\frac{\tau }{2}}}{\left( 2\pi \right)
^{m\tau /2}}\frac{\tau }{\left( 1+\tau \right) ^{m/2}}trace\left( 
\boldsymbol{\Sigma }\left( \boldsymbol{\theta }\right) ^{-1}\frac{\partial 
\boldsymbol{\mu }\left( \boldsymbol{\theta }\right) }{\partial \theta _{j}}%
\left( \frac{\partial \boldsymbol{\mu }\left( \boldsymbol{\theta }\right) }{%
\partial \theta _{i}}\right) ^{T}\right) .
\end{equation*}%
It is clear that%
\begin{equation*}
L_{2}^{\ast }\underset{n\longrightarrow \infty }{\overset{\mathcal{P}}{%
\longrightarrow }}\boldsymbol{0}\text{ and }L_{3}^{\ast }\underset{%
n\longrightarrow \infty }{\overset{\mathcal{P}}{\longrightarrow }}%
\boldsymbol{0.}\text{ }
\end{equation*}%
On the other hand,%
\begin{eqnarray*}
&&L_{4}^{\ast }\underset{n\longrightarrow \infty }{\overset{\mathcal{P}}{%
\longrightarrow }}\frac{\tau +1}{\left( 2\pi \right) ^{m\tau /2}}\frac{\tau 
}{4}\left \vert \boldsymbol{\Sigma }\left( \boldsymbol{\theta }\right)
\right \vert ^{-\frac{\tau }{2}}\frac{1}{\left( 1+\tau \right) ^{m/2}} \\
&&\left \{ trace\left \{ \left( \boldsymbol{\Sigma }\left( \boldsymbol{%
\theta }\right) ^{-1}\dfrac{\partial \boldsymbol{\Sigma }\left( \boldsymbol{%
\theta }\right) }{\partial \theta _{j}}\boldsymbol{\Sigma }\left( 
\boldsymbol{\theta }\right) ^{-1}\dfrac{\partial \boldsymbol{\Sigma }\left( 
\boldsymbol{\theta }\right) }{\partial \theta _{i}}\right) \left[ 
\boldsymbol{\Sigma }\left( \boldsymbol{\theta }\right) ^{-1}\dfrac{\partial 
\boldsymbol{\Sigma }\left( \boldsymbol{\theta }\right) }{\partial \theta _{i}%
}\boldsymbol{\Sigma }\left( \boldsymbol{\theta }\right) ^{-1}+\boldsymbol{%
\Sigma }\left( \boldsymbol{\theta }\right) ^{-1}\dfrac{\partial \boldsymbol{%
\Sigma }\left( \boldsymbol{\theta }\right) }{\partial \theta _{i}}%
\boldsymbol{\Sigma }\left( \boldsymbol{\theta }\right) ^{-1}\right] \right.
\right. \\
&&\left. \left. \frac{\boldsymbol{\Sigma }\left( \boldsymbol{\theta }\right) 
}{1+\tau }\right \} +trace\left( \boldsymbol{\Sigma }\left( \boldsymbol{%
\theta }\right) ^{-1}\dfrac{\partial \boldsymbol{\Sigma }\left( \boldsymbol{%
\theta }\right) }{\partial \theta _{i}}\boldsymbol{\Sigma }\left( 
\boldsymbol{\theta }\right) ^{-1}\frac{\boldsymbol{\Sigma }\left( 
\boldsymbol{\theta }\right) }{1+\tau }\right) trace\left( \boldsymbol{\Sigma 
}\left( \boldsymbol{\theta }\right) ^{-1}\dfrac{\partial \boldsymbol{\Sigma }%
\left( \boldsymbol{\theta }\right) }{\partial \theta _{i}}\boldsymbol{\Sigma 
}\left( \boldsymbol{\theta }\right) ^{-1}\frac{\boldsymbol{\Sigma }\left( 
\boldsymbol{\theta }\right) }{1+\tau }\right) \right \} \\
&&=2\frac{\tau }{4}\frac{\left \vert \boldsymbol{\Sigma }\left( \boldsymbol{%
\theta }\right) \right \vert ^{-\frac{\tau }{2}}}{\left( 2\pi \right)
^{m\tau /2}}\frac{1}{\left( 1+\tau \right) ^{m/2+1}}trace\left( \boldsymbol{%
\Sigma }\left( \boldsymbol{\theta }\right) ^{-1}\dfrac{\partial \boldsymbol{%
\Sigma }\left( \boldsymbol{\theta }\right) }{\partial \theta _{i}}%
\boldsymbol{\Sigma }\left( \boldsymbol{\theta }\right) ^{-1}\boldsymbol{%
\Sigma }\left( \boldsymbol{\theta }\right) \dfrac{\partial \boldsymbol{%
\Sigma }\left( \boldsymbol{\theta }\right) }{\partial \theta _{i}}\right) \\
&&+\frac{\tau }{4}\frac{\left \vert \boldsymbol{\Sigma }\left( \boldsymbol{%
\theta }\right) \right \vert ^{-\frac{\tau }{2}}}{\left( 2\pi \right)
^{m\tau /2}}\frac{1}{\left( 1+\tau \right) ^{m/2+1}}trace\left( \boldsymbol{%
\Sigma }\left( \boldsymbol{\theta }\right) ^{-1}\dfrac{\partial \boldsymbol{%
\Sigma }\left( \boldsymbol{\theta }\right) }{\partial \theta _{j}}\right)
trace\left( \boldsymbol{\Sigma }\left( \boldsymbol{\theta }\right) ^{-1}%
\dfrac{\partial \boldsymbol{\Sigma }\left( \boldsymbol{\theta }\right) }{%
\partial \theta _{i}}\right) .
\end{eqnarray*}%
Therefore,

\begin{equation*}
C_{2}=L_{1}^{\ast }+L_{2}^{\ast }+L_{3}^{\ast }+L_{4}^{\ast }\underset{%
n\longrightarrow \infty }{\overset{\mathcal{P}}{\longrightarrow }}R
\end{equation*}%
being 
\begin{eqnarray*}
R &=&\frac{\tau \left \vert \boldsymbol{\Sigma }\left( \boldsymbol{\theta }%
\right) \right \vert ^{-\frac{\tau }{2}}}{\left( 2\pi \right) ^{m\tau
/2}\left( 1+\tau \right) ^{m/2}}trace\left( \boldsymbol{\Sigma }\left( 
\boldsymbol{\theta }\right) ^{-1}\dfrac{\partial \boldsymbol{\mu }\left( 
\boldsymbol{\theta }\right) }{\partial \theta _{j}}\left( \dfrac{\partial 
\boldsymbol{\mu }\left( \boldsymbol{\theta }\right) }{\partial \theta _{j}}%
\right) ^{T}\right) \\
&&+\frac{\tau }{2}\frac{\left \vert \boldsymbol{\Sigma }\left( \boldsymbol{%
\theta }\right) \right \vert ^{-\frac{\tau }{2}}}{\left( 2\pi \right)
^{m\tau /2}\left( 1+\tau \right) ^{m/2+1}}trace\left( \boldsymbol{\Sigma }%
\left( \boldsymbol{\theta }\right) ^{-1}\dfrac{\partial \boldsymbol{\Sigma }%
\left( \boldsymbol{\theta }\right) }{\partial \theta _{j}}\boldsymbol{\Sigma 
}\left( \boldsymbol{\theta }\right) ^{-1}\dfrac{\partial \boldsymbol{\Sigma }%
\left( \boldsymbol{\theta }\right) }{\partial \theta _{i}}\right) \\
&&+\frac{\tau }{4}\frac{\left \vert \boldsymbol{\Sigma }\left( \boldsymbol{%
\theta }\right) \right \vert ^{-\frac{\tau }{2}}}{\left( 2\pi \right)
^{m\tau /2}\left( 1+\tau \right) ^{m/2+1}}trace\left( \boldsymbol{\Sigma }%
\left( \boldsymbol{\theta }\right) ^{-1}\dfrac{\partial \boldsymbol{\Sigma }%
\left( \boldsymbol{\theta }\right) }{\partial \theta _{j}}\right)
trace\left( \boldsymbol{\Sigma }\left( \boldsymbol{\theta }\right) ^{-1}%
\dfrac{\partial \boldsymbol{\Sigma }\left( \boldsymbol{\theta }\right) }{%
\partial \theta _{i}}\right) .
\end{eqnarray*}%
Finally,%
\begin{eqnarray*}
-C_{3} &=&a\frac{\tau }{2}\left \vert \boldsymbol{\Sigma }\left( \boldsymbol{%
\theta }\right) \right \vert ^{-\frac{\tau }{2}}\frac{1}{n}\tsum
\limits_{i=1}^{n}\exp \left \{ -\frac{\tau }{2}\left( \boldsymbol{y}_{i}-%
\boldsymbol{\mu }\left( \boldsymbol{\theta }\right) \right) ^{T}\boldsymbol{%
\Sigma }\left( \boldsymbol{\theta }\right) ^{-1}\left( \boldsymbol{y}_{i}-%
\boldsymbol{\mu }\left( \boldsymbol{\theta }\right) \right) \right \} \\
&&\left \{ \left( 2\frac{\partial }{\partial \theta _{j}}\left( \frac{%
\partial \boldsymbol{\mu }\left( \boldsymbol{\theta }\right) }{\partial
\theta _{i}}\right) ^{T}\boldsymbol{\Sigma }\left( \boldsymbol{\theta }%
\right) ^{-1}\left( \boldsymbol{y}_{i}-\boldsymbol{\mu }\left( \boldsymbol{%
\theta }\right) \right) \right) \right. \\
&&\left. +2\left( \frac{\partial \boldsymbol{\mu }\left( \boldsymbol{\theta }%
\right) }{\partial \theta _{i}}\right) ^{T}\dfrac{\partial \boldsymbol{%
\Sigma }\left( \boldsymbol{\theta }\right) ^{-1}}{\partial \theta _{i}}%
\left( \boldsymbol{y}_{i}-\boldsymbol{\mu }\left( \boldsymbol{\theta }%
\right) \right) \right. \\
&&\left. -2\left( \frac{\partial \boldsymbol{\mu }\left( \boldsymbol{\theta }%
\right) }{\partial \theta _{i}}\right) ^{T}\boldsymbol{\Sigma }\left( 
\boldsymbol{\theta }\right) ^{-1}\left( \frac{\partial \boldsymbol{\mu }%
\left( \boldsymbol{\theta }\right) }{\partial \theta _{j}}\right) ^{T}\right.
\\
&&\left. -2\left( \frac{\partial \boldsymbol{\mu }\left( \boldsymbol{\theta }%
\right) }{\partial \theta _{i}}\right) ^{T}\left( \boldsymbol{\Sigma }\left( 
\boldsymbol{\theta }\right) ^{-1}\dfrac{\partial \boldsymbol{\Sigma }\left( 
\boldsymbol{\theta }\right) }{\partial \theta _{i}}\boldsymbol{\Sigma }%
\left( \boldsymbol{\theta }\right) ^{-1}\right) \left( \boldsymbol{y}_{i}-%
\boldsymbol{\mu }\left( \boldsymbol{\theta }\right) \right) \right. \\
&&\left. +\left( \boldsymbol{y}_{i}-\boldsymbol{\mu }\left( \boldsymbol{%
\theta }\right) \right) ^{T}\left \{ -\right. \boldsymbol{\Sigma }\left( 
\boldsymbol{\theta }\right) ^{-1}\dfrac{\partial \boldsymbol{\Sigma }\left( 
\boldsymbol{\theta }\right) }{\partial \theta _{j}}\boldsymbol{\Sigma }%
\left( \boldsymbol{\theta }\right) ^{-1}\dfrac{\partial \boldsymbol{\Sigma }%
\left( \boldsymbol{\theta }\right) }{\partial \theta _{i}}\boldsymbol{\Sigma 
}\left( \boldsymbol{\theta }\right) ^{-1}\right. \\
&&\left. +\boldsymbol{\Sigma }\left( \boldsymbol{\theta }\right) ^{-1}\dfrac{%
\partial ^{2}\boldsymbol{\Sigma }\left( \boldsymbol{\theta }\right) }{%
\partial \theta _{i}\partial \theta _{j}}\boldsymbol{\Sigma }\left( 
\boldsymbol{\theta }\right) ^{-1}-\boldsymbol{\Sigma }\left( \boldsymbol{%
\theta }\right) ^{-1}\dfrac{\partial \boldsymbol{\Sigma }\left( \boldsymbol{%
\theta }\right) }{\partial \theta _{j}}\boldsymbol{\Sigma }\left( 
\boldsymbol{\theta }\right) ^{-1}\dfrac{\partial \boldsymbol{\Sigma }\left( 
\boldsymbol{\theta }\right) }{\partial \theta _{i}}\boldsymbol{\Sigma }%
\left( \boldsymbol{\theta }\right) ^{-1}\right \} \left( \boldsymbol{y}_{i}-%
\boldsymbol{\mu }\left( \boldsymbol{\theta }\right) \right) \\
&=&A_{1}^{\ast }+A_{2}^{\ast }+A_{3}^{\ast }+A_{4}^{\ast }+A_{5}^{\ast }.
\end{eqnarray*}%
It is clear that 
\begin{equation*}
A_{1}^{\ast }\underset{n\longrightarrow \infty }{\overset{\mathcal{P}}{%
\longrightarrow }}0\text{ }A_{2}^{\ast }\underset{n\longrightarrow \infty }{%
\overset{\mathcal{P}}{\longrightarrow }}0\text{ and }A_{4}^{\ast }\underset{%
n\longrightarrow \infty }{\overset{\mathcal{P}}{\longrightarrow }}0.
\end{equation*}%
On the other hand,%
\begin{eqnarray*}
A_{3}^{\ast } &=&-a\frac{\tau }{2}\left \vert \boldsymbol{\Sigma }\left( 
\boldsymbol{\theta }\right) \right \vert ^{-\frac{\tau }{2}}\frac{1}{n}\tsum
\limits_{i=1}^{n}\exp \left \{ -\frac{\tau }{2}\left( \boldsymbol{y}_{i}-%
\boldsymbol{\mu }\left( \boldsymbol{\theta }\right) \right) ^{T}\boldsymbol{%
\Sigma }\left( \boldsymbol{\theta }\right) ^{-1}\left( \boldsymbol{y}_{i}-%
\boldsymbol{\mu }\left( \boldsymbol{\theta }\right) \right) \right \} \\
&&\left( 2\left( \frac{\partial \boldsymbol{\mu }\left( \boldsymbol{\theta }%
\right) }{\partial \theta _{i}}\right) ^{T}\boldsymbol{\Sigma }\left( 
\boldsymbol{\theta }\right) ^{-1}\frac{\partial \boldsymbol{\mu }\left( 
\boldsymbol{\theta }\right) }{\partial \theta _{i}}\right)
\end{eqnarray*}%
and 
\begin{equation*}
A_{3}^{\ast }\underset{n\longrightarrow \infty }{\overset{\mathcal{P}}{%
\longrightarrow }}-(\tau +1)\frac{\left \vert \boldsymbol{\Sigma }\left( 
\boldsymbol{\theta }\right) \right \vert ^{-\frac{\tau }{2}}}{\left( 2\pi
\right) ^{m\tau /2}\left( 1+\tau \right) ^{m/2}}\left( \frac{\partial 
\boldsymbol{\mu }\left( \boldsymbol{\theta }\right) }{\partial \theta _{i}}%
\right) ^{T}\boldsymbol{\Sigma }\left( \boldsymbol{\theta }\right) ^{-1}%
\frac{\partial \boldsymbol{\mu }\left( \boldsymbol{\theta }\right) }{%
\partial \theta _{j}}.
\end{equation*}%
In relation to $A_{5}^{\ast }$ we have, 
\begin{eqnarray*}
A_{5}^{\ast } &=&a\frac{\tau }{2}\left \vert \boldsymbol{\Sigma }\left( 
\boldsymbol{\theta }\right) \right \vert ^{-\frac{\tau }{2}}\frac{1}{n}\tsum
\limits_{i=1}^{n}\exp \left \{ -\frac{\tau }{2}\left( \boldsymbol{y}_{i}-%
\boldsymbol{\mu }\left( \boldsymbol{\theta }\right) \right) ^{T}\boldsymbol{%
\Sigma }\left( \boldsymbol{\theta }\right) ^{-1}\left( \boldsymbol{y}_{i}-%
\boldsymbol{\mu }\left( \boldsymbol{\theta }\right) \right) \right \} \\
&&\left \{ \left( \boldsymbol{y}_{i}-\boldsymbol{\mu }\left( \boldsymbol{%
\theta }\right) \right) ^{T}\left[ \boldsymbol{\Sigma }\left( \boldsymbol{%
\theta }\right) ^{-1}\dfrac{\partial \boldsymbol{\Sigma }\left( \boldsymbol{%
\theta }\right) }{\partial \theta _{j}}\boldsymbol{\Sigma }\left( 
\boldsymbol{\theta }\right) ^{-1}\dfrac{\partial \boldsymbol{\Sigma }\left( 
\boldsymbol{\theta }\right) }{\partial \theta _{i}}\boldsymbol{\Sigma }%
\left( \boldsymbol{\theta }\right) ^{-1}\right. \right. \\
&&\left. \left. +\boldsymbol{\Sigma }\left( \boldsymbol{\theta }\right) ^{-1}%
\dfrac{\partial ^{2}\boldsymbol{\Sigma }\left( \boldsymbol{\theta }\right) }{%
\partial \theta _{i}\partial \theta _{j}}\boldsymbol{\Sigma }\left( 
\boldsymbol{\theta }\right) ^{-1}-\boldsymbol{\Sigma }\left( \boldsymbol{%
\theta }\right) ^{-1}\dfrac{\partial \boldsymbol{\Sigma }\left( \boldsymbol{%
\theta }\right) }{\partial \theta _{j}}\boldsymbol{\Sigma }\left( 
\boldsymbol{\theta }\right) ^{-1}\dfrac{\partial \boldsymbol{\Sigma }\left( 
\boldsymbol{\theta }\right) }{\partial \theta _{j}}\boldsymbol{\Sigma }%
\left( \boldsymbol{\theta }\right) ^{-1}\right] \left( \boldsymbol{y}_{i}-%
\boldsymbol{\mu }\left( \boldsymbol{\theta }\right) \right) \right \} ,
\end{eqnarray*}%
and 
\begin{eqnarray*}
&&A_{5}^{\ast }\underset{n\longrightarrow \infty }{\overset{\mathcal{P}}{%
\longrightarrow }}-\frac{1}{\left( 2\pi \right) ^{m\tau /2}}\frac{1}{2}\left
\vert \boldsymbol{\Sigma }\left( \boldsymbol{\theta }\right) \right \vert ^{-%
\frac{\tau }{2}}\frac{1}{\left( 1+\tau \right) ^{m/2}}trace\left( 
\boldsymbol{\Sigma }\left( \boldsymbol{\theta }\right) ^{-1}\dfrac{\partial 
\boldsymbol{\Sigma }\left( \boldsymbol{\theta }\right) }{\partial \theta _{j}%
}\boldsymbol{\Sigma }\left( \boldsymbol{\theta }\right) ^{-1}\dfrac{\partial 
\boldsymbol{\Sigma }\left( \boldsymbol{\theta }\right) }{\partial \theta _{i}%
}\right) \\
&&+\frac{1}{\left( 2\pi \right) ^{m\tau /2}}\frac{1}{2}\left \vert 
\boldsymbol{\Sigma }\left( \boldsymbol{\theta }\right) \right \vert ^{-\frac{%
\tau }{2}}\frac{1}{\left( 1+\tau \right) ^{m/2}}trace\left( \boldsymbol{%
\Sigma }\left( \boldsymbol{\theta }\right) ^{-1}\dfrac{\partial ^{2}%
\boldsymbol{\Sigma }\left( \boldsymbol{\theta }\right) }{\partial \theta
_{i}\partial \theta }\right) \\
&&-\frac{1}{\left( 2\pi \right) ^{m\tau /2}}\frac{1}{2}\left \vert 
\boldsymbol{\Sigma }\left( \boldsymbol{\theta }\right) \right \vert ^{-\frac{%
\tau }{2}}\frac{1}{\left( 1+\tau \right) ^{m/2}}trace\left( \boldsymbol{%
\Sigma }\left( \boldsymbol{\theta }\right) ^{-1}\dfrac{\partial \boldsymbol{%
\Sigma }\left( \boldsymbol{\theta }\right) }{\partial \theta _{j}}%
\boldsymbol{\Sigma }\left( \boldsymbol{\theta }\right) ^{-1}\dfrac{\partial 
\boldsymbol{\Sigma }\left( \boldsymbol{\theta }\right) }{\partial \theta _{i}%
}\right) .
\end{eqnarray*}%
Therefore,%
\begin{eqnarray*}
&&C_{3}\underset{n\longrightarrow \infty }{\overset{\mathcal{P}}{%
\longrightarrow }}-\left( \tau +1\right) \frac{\left \vert \boldsymbol{%
\Sigma }\left( \boldsymbol{\theta }\right) \right \vert ^{-\frac{\tau }{2}}}{%
\left( 2\pi \right) ^{m\tau /2}}\frac{1}{\left( 1+\tau \right) ^{m/2}}\left( 
\frac{\partial \boldsymbol{\mu }\left( \boldsymbol{\theta }\right) }{%
\partial \theta _{i}}\right) ^{T}\boldsymbol{\Sigma }\left( \boldsymbol{%
\theta }\right) ^{-1}\frac{\partial \boldsymbol{\mu }\left( \boldsymbol{%
\theta }\right) }{\partial \theta _{j}} \\
&&-\frac{\left \vert \boldsymbol{\Sigma }\left( \boldsymbol{\theta }\right)
\right \vert ^{-\frac{\tau }{2}}}{\left( 2\pi \right) ^{m\tau /2}}\frac{1}{%
\left( 1+\tau \right) ^{m/2}}\frac{1}{2}trace\left( \boldsymbol{\Sigma }%
\left( \boldsymbol{\theta }\right) ^{-1}\dfrac{\partial \boldsymbol{\Sigma }%
\left( \boldsymbol{\theta }\right) }{\partial \theta _{j}}\boldsymbol{\Sigma 
}\left( \boldsymbol{\theta }\right) ^{-1}\dfrac{\partial \boldsymbol{\Sigma }%
\left( \boldsymbol{\theta }\right) }{\partial \theta _{i}}\right) \\
&&+\frac{\left \vert \boldsymbol{\Sigma }\left( \boldsymbol{\theta }\right)
\right \vert ^{-\frac{\tau }{2}}}{\left( 2\pi \right) ^{m\tau /2}}\frac{1}{%
\left( 1+\tau \right) ^{m/2}}\frac{1}{2}trace\left( \boldsymbol{\Sigma }%
\left( \boldsymbol{\theta }\right) ^{-1}\dfrac{\partial ^{2}\boldsymbol{%
\Sigma }\left( \boldsymbol{\theta }\right) }{\partial \theta _{i}\partial
\theta _{j}}\right) \\
&&-\frac{\left \vert \boldsymbol{\Sigma }\left( \boldsymbol{\theta }\right)
\right \vert ^{-\frac{\tau }{2}}}{\left( 2\pi \right) ^{m\tau /2}}\frac{1}{2}%
\frac{1}{\left( 1+\tau \right) ^{m/2}}trace\left( \boldsymbol{\Sigma }\left( 
\boldsymbol{\theta }\right) ^{-1}\dfrac{\partial \boldsymbol{\Sigma }\left( 
\boldsymbol{\theta }\right) }{\partial \theta _{j}}\boldsymbol{\Sigma }%
\left( \boldsymbol{\theta }\right) ^{-1}\dfrac{\partial \boldsymbol{\Sigma }%
\left( \boldsymbol{\theta }\right) }{\partial \theta _{i}}\right) .
\end{eqnarray*}

We are going to joint all the previous expressions in order to get $\frac{%
\partial }{\partial \theta _{j}}L_{2}^{\tau }(\boldsymbol{\theta }),$%
\begin{eqnarray*}
\frac{\partial }{\partial \theta _{j}}L_{2}^{\tau }(\boldsymbol{\theta })
&=&C_{1}+C_{2}+C_{3} \\
&=&C_{1}+L_{1}^{\ast }+L_{2}^{\ast }+L_{3}^{\ast }+L_{4}^{\ast }+C_{3} \\
&=&C_{1}+L_{1}^{\ast }+L_{2}^{\ast }+L_{3}^{\ast }+L_{4}^{\ast }+A_{1}^{\ast
}+A_{2}^{\ast }+A_{3}^{\ast }+A_{4}^{\ast }+A_{5}^{\ast }.
\end{eqnarray*}%
Then,%
\begin{eqnarray*}
&&\frac{\partial }{\partial \theta _{j}}L_{2}^{\tau }(\boldsymbol{\theta })%
\underset{n\longrightarrow \infty }{\overset{\mathcal{P}}{\longrightarrow }}-%
\frac{\tau }{4}\frac{1}{\left( 1+\tau \right) ^{m/2}}\frac{\left \vert 
\boldsymbol{\Sigma }\left( \boldsymbol{\theta }\right) \right \vert ^{-\frac{%
\tau }{2}}}{\left( 2\pi \right) ^{m\tau /2}}trace\left( \boldsymbol{\Sigma }%
\left( \boldsymbol{\theta }\right) ^{-1}\dfrac{\partial \boldsymbol{\Sigma }%
\left( \boldsymbol{\theta }\right) }{\partial \theta _{i}}\right)
trace\left( \boldsymbol{\Sigma }\left( \boldsymbol{\theta }\right) ^{-1}%
\dfrac{\partial \boldsymbol{\Sigma }\left( \boldsymbol{\theta }\right) }{%
\partial \theta _{j}}\right) \\
&&+\frac{\tau \left \vert \boldsymbol{\Sigma }\left( \boldsymbol{\theta }%
\right) \right \vert ^{-\frac{\tau }{2}}}{\left( 2\pi \right) ^{m\tau
/2}\left( 1+\tau \right) ^{m/2}}trace\left( \boldsymbol{\Sigma }\left( 
\boldsymbol{\theta }\right) ^{-1}\dfrac{\partial \boldsymbol{\Sigma }\left( 
\boldsymbol{\theta }\right) }{\partial \theta _{j}}\boldsymbol{\Sigma }%
\left( \boldsymbol{\theta }\right) ^{-1}\dfrac{\partial \boldsymbol{\Sigma }%
\left( \boldsymbol{\theta }\right) }{\partial \theta _{i}}\right) \\
&&+\frac{\tau }{2}\frac{\left \vert \boldsymbol{\Sigma }\left( \boldsymbol{%
\theta }\right) \right \vert ^{-\frac{\tau }{2}}}{\left( 2\pi \right)
^{m\tau /2}\left( 1+\tau \right) ^{\frac{m}{2}+1}}trace\left( \boldsymbol{%
\Sigma }\left( \boldsymbol{\theta }\right) ^{-1}\dfrac{\partial \boldsymbol{%
\Sigma }\left( \boldsymbol{\theta }\right) }{\partial \theta _{j}}%
\boldsymbol{\Sigma }\left( \boldsymbol{\theta }\right) ^{-1}\dfrac{\partial 
\boldsymbol{\Sigma }\left( \boldsymbol{\theta }\right) }{\partial \theta _{i}%
}\right) \\
&&+\frac{\tau }{4}\frac{\left \vert \boldsymbol{\Sigma }\left( \boldsymbol{%
\theta }\right) \right \vert ^{-\frac{\tau }{2}}}{\left( 2\pi \right)
^{m\tau /2}\left( 1+\tau \right) ^{m/2+1}}trace\left( \boldsymbol{\Sigma }%
\left( \boldsymbol{\theta }\right) ^{-1}\dfrac{\partial \boldsymbol{\Sigma }%
\left( \boldsymbol{\theta }\right) }{\partial \theta _{j}}\right)
trace\left( \boldsymbol{\Sigma }\left( \boldsymbol{\theta }\right) ^{-1}%
\dfrac{\partial \boldsymbol{\Sigma }\left( \boldsymbol{\theta }\right) }{%
\partial \theta _{i}}\right) \\
&&-\left( \tau +1\right) \frac{\left \vert \boldsymbol{\Sigma }\left( 
\boldsymbol{\theta }\right) \right \vert ^{-\frac{\tau }{2}}}{\left( 2\pi
\right) ^{m\tau /2}}\frac{1}{\left( 1+\tau \right) ^{m/2}}\left( \frac{%
\partial \boldsymbol{\mu }\left( \boldsymbol{\theta }\right) }{\partial
\theta _{i}}\right) ^{T}\boldsymbol{\Sigma }\left( \boldsymbol{\theta }%
\right) ^{-1}\frac{\partial \boldsymbol{\mu }\left( \boldsymbol{\theta }%
\right) }{\partial \theta _{j}} \\
&&-\frac{\left \vert \boldsymbol{\Sigma }\left( \boldsymbol{\theta }\right)
\right \vert ^{-\frac{\tau }{2}}}{\left( 2\pi \right) ^{m\tau /2}}\frac{1}{%
\left( 1+\tau \right) ^{m/2}}\frac{1}{2}trace\left( \boldsymbol{\Sigma }%
\left( \boldsymbol{\theta }\right) ^{-1}\dfrac{\partial \boldsymbol{\Sigma }%
\left( \boldsymbol{\theta }\right) }{\partial \theta _{j}}\boldsymbol{\Sigma 
}\left( \boldsymbol{\theta }\right) ^{-1}\dfrac{\partial \boldsymbol{\Sigma }%
\left( \boldsymbol{\theta }\right) }{\partial \theta _{i}}\right) \\
&&+\frac{\left \vert \boldsymbol{\Sigma }\left( \boldsymbol{\theta }\right)
\right \vert ^{-\frac{\tau }{2}}}{\left( 2\pi \right) ^{m\tau /2}}\frac{1}{%
\left( 1+\tau \right) ^{m/2}}\frac{1}{2}trace\left( \boldsymbol{\Sigma }%
\left( \boldsymbol{\theta }\right) ^{-1}\dfrac{\partial ^{2}\boldsymbol{%
\Sigma }\left( \boldsymbol{\theta }\right) }{\partial \theta _{i}\partial
\theta _{j}}\right) \\
&&-\frac{\left \vert \boldsymbol{\Sigma }\left( \boldsymbol{\theta }\right)
\right \vert ^{-\frac{\tau }{2}}}{\left( 2\pi \right) ^{m\tau /2}}\frac{1}{2}%
\frac{1}{\left( 1+\tau \right) ^{m/2}}trace\left( \boldsymbol{\Sigma }\left( 
\boldsymbol{\theta }\right) ^{-1}\dfrac{\partial \boldsymbol{\Sigma }\left( 
\boldsymbol{\theta }\right) }{\partial \theta _{j}}\boldsymbol{\Sigma }%
\left( \boldsymbol{\theta }\right) ^{-1}\dfrac{\partial \boldsymbol{\Sigma }%
\left( \boldsymbol{\theta }\right) }{\partial \theta _{i}}\right) .
\end{eqnarray*}%
Based on the previous results we have%
\begin{eqnarray*}
\frac{\partial ^{2}}{\partial \theta _{i}\partial \theta _{j}}H_{n}^{\tau }(%
\boldsymbol{\theta }) &=&\frac{\partial }{\partial \theta _{j}}L_{1}^{\tau }(%
\boldsymbol{\theta })+\frac{\partial }{\partial \theta _{j}}L_{2}^{\tau }(%
\boldsymbol{\theta }) \\
&=&D_{1}+D_{2}+D_{3}+C_{1}+C_{2}+C_{3} \\
&=&D_{1}+D_{2}+D_{3}+C_{1}+L_{1}^{\ast }+L_{2}^{\ast }+L_{3}^{\ast
}+L_{4}^{\ast } \\
&&+A_{1}^{\ast }+A_{2}^{\ast }+A_{3}^{\ast }+A_{4}^{\ast }+A_{5}^{\ast }
\end{eqnarray*}%
and 
\begin{eqnarray*}
&&\frac{\partial ^{2}}{\partial \theta _{i}\partial \theta _{j}}H_{n}^{\tau
}(\boldsymbol{\theta })\underset{n\longrightarrow \infty }{\overset{\mathcal{%
P}}{\longrightarrow }}\frac{1}{2}\frac{\left \vert \boldsymbol{\Sigma }%
\left( \boldsymbol{\theta }\right) \right \vert ^{-\frac{\tau }{2}}}{\left(
2\pi \right) ^{m\tau /2}}\frac{1}{\left( 1+\tau \right) ^{m/2}}trace\left( 
\boldsymbol{\Sigma }\left( \boldsymbol{\theta }\right) ^{-1}\dfrac{\partial 
\boldsymbol{\Sigma }\left( \boldsymbol{\theta }\right) }{\partial \theta _{j}%
}\boldsymbol{\Sigma }\left( \boldsymbol{\theta }\right) ^{-1}\dfrac{\partial 
\boldsymbol{\Sigma }\left( \boldsymbol{\theta }\right) }{\partial \theta _{i}%
}\right) \\
&&-\frac{1}{2}\frac{\left \vert \boldsymbol{\Sigma }\left( \boldsymbol{%
\theta }\right) \right \vert ^{-\frac{\tau }{2}}}{\left( 2\pi \right)
^{m\tau /2}}\frac{1}{\left( 1+\tau \right) ^{m/2}}\left( \boldsymbol{\Sigma }%
\left( \boldsymbol{\theta }\right) ^{-1}\dfrac{\partial ^{2}\boldsymbol{%
\Sigma }\left( \boldsymbol{\theta }\right) }{\partial \theta _{j}\partial
\theta _{i}}\right) \\
&&-\frac{\tau }{4}\frac{1}{\left( 1+\tau \right) ^{m/2}}\frac{\left \vert 
\boldsymbol{\Sigma }\left( \boldsymbol{\theta }\right) \right \vert ^{-\frac{%
\tau }{2}}}{\left( 2\pi \right) ^{m\tau /2}}trace\left( \boldsymbol{\Sigma }%
\left( \boldsymbol{\theta }\right) ^{-1}\dfrac{\partial \boldsymbol{\Sigma }%
\left( \boldsymbol{\theta }\right) }{\partial \theta _{i}}\right)
trace\left( \boldsymbol{\Sigma }\left( \boldsymbol{\theta }\right) ^{-1}%
\dfrac{\partial \boldsymbol{\Sigma }\left( \boldsymbol{\theta }\right) }{%
\partial \theta _{j}}\right) \\
&&+\frac{\tau \left \vert \boldsymbol{\Sigma }\left( \boldsymbol{\theta }%
\right) \right \vert ^{-\frac{\tau }{2}}}{\left( 2\pi \right) ^{m\tau
/2}\left( 1+\tau \right) ^{m/2}}trace\left( \boldsymbol{\Sigma }\left( 
\boldsymbol{\theta }\right) ^{-1}\dfrac{\partial \boldsymbol{\Sigma }\left( 
\boldsymbol{\theta }\right) }{\partial \theta _{j}}\boldsymbol{\Sigma }%
\left( \boldsymbol{\theta }\right) ^{-1}\dfrac{\partial \boldsymbol{\Sigma }%
\left( \boldsymbol{\theta }\right) }{\partial \theta _{i}}\right) \\
&&+\frac{\tau }{2}\frac{\left \vert \boldsymbol{\Sigma }\left( \boldsymbol{%
\theta }\right) \right \vert ^{-\frac{\tau }{2}}}{\left( 2\pi \right)
^{m\tau /2}\left( 1+\tau \right) ^{m/2+1}}trace\left( \boldsymbol{\Sigma }%
\left( \boldsymbol{\theta }\right) ^{-1}\dfrac{\partial \boldsymbol{\Sigma }%
\left( \boldsymbol{\theta }\right) }{\partial \theta _{j}}\boldsymbol{\Sigma 
}\left( \boldsymbol{\theta }\right) ^{-1}\dfrac{\partial \boldsymbol{\Sigma }%
\left( \boldsymbol{\theta }\right) }{\partial \theta _{i}}\right) \\
&&+\frac{\tau }{4}\frac{\left \vert \boldsymbol{\Sigma }\left( \boldsymbol{%
\theta }\right) \right \vert ^{-\frac{\tau }{2}}}{\left( 2\pi \right)
^{m\tau /2}\left( 1+\tau \right) ^{m/2+1}}trace\left( \boldsymbol{\Sigma }%
\left( \boldsymbol{\theta }\right) ^{-1}\dfrac{\partial \boldsymbol{\Sigma }%
\left( \boldsymbol{\theta }\right) }{\partial \theta _{j}}\right)
trace\left( \boldsymbol{\Sigma }\left( \boldsymbol{\theta }\right) ^{-1}%
\dfrac{\partial \boldsymbol{\Sigma }\left( \boldsymbol{\theta }\right) }{%
\partial \theta _{i}}\right) \\
&&-\left( \tau +1\right) \frac{\left \vert \boldsymbol{\Sigma }\left( 
\boldsymbol{\theta }\right) \right \vert ^{-\frac{\tau }{2}}}{\left( 2\pi
\right) ^{m\tau /2}}\frac{1}{\left( 1+\tau \right) ^{m/2}}\left( \frac{%
\partial \boldsymbol{\mu }\left( \boldsymbol{\theta }\right) }{\partial
\theta _{i}}\right) ^{T}\boldsymbol{\Sigma }\left( \boldsymbol{\theta }%
\right) ^{-1}\frac{\partial \boldsymbol{\mu }\left( \boldsymbol{\theta }%
\right) }{\partial \theta _{j}} \\
&&-\frac{\left \vert \boldsymbol{\Sigma }\left( \boldsymbol{\theta }\right)
\right \vert ^{-\frac{\tau }{2}}}{\left( 2\pi \right) ^{m\tau /2}}\frac{1}{%
\left( 1+\tau \right) ^{m/2}}\frac{1}{2}trace\left( \boldsymbol{\Sigma }%
\left( \boldsymbol{\theta }\right) ^{-1}\dfrac{\partial \boldsymbol{\Sigma }%
\left( \boldsymbol{\theta }\right) }{\partial \theta _{j}}\boldsymbol{\Sigma 
}\left( \boldsymbol{\theta }\right) ^{-1}\dfrac{\partial \boldsymbol{\Sigma }%
\left( \boldsymbol{\theta }\right) }{\partial \theta _{i}}\right) \\
&&+\frac{\left \vert \boldsymbol{\Sigma }\left( \boldsymbol{\theta }\right)
\right \vert ^{-\frac{\tau }{2}}}{\left( 2\pi \right) ^{m\tau /2}}\frac{1}{%
\left( 1+\tau \right) ^{m/2}}\frac{1}{2}trace\left( \boldsymbol{\Sigma }%
\left( \boldsymbol{\theta }\right) ^{-1}\dfrac{\partial ^{2}\boldsymbol{%
\Sigma }\left( \boldsymbol{\theta }\right) }{\partial \theta _{i}\partial
\theta _{j}}\right) \\
&&-\frac{\left \vert \boldsymbol{\Sigma }\left( \boldsymbol{\theta }\right)
\right \vert ^{-\frac{\tau }{2}}}{\left( 2\pi \right) ^{m\tau /2}}\frac{1}{%
\left( 1+\tau \right) ^{m/2}}\frac{1}{2}trace\left( \boldsymbol{\Sigma }%
\left( \boldsymbol{\theta }\right) ^{-1}\dfrac{\partial \boldsymbol{\Sigma }%
\left( \boldsymbol{\theta }\right) }{\partial \theta _{j}}\boldsymbol{\Sigma 
}\left( \boldsymbol{\theta }\right) ^{-1}\dfrac{\partial \boldsymbol{\Sigma }%
\left( \boldsymbol{\theta }\right) }{\partial \theta _{i}}\right) .
\end{eqnarray*}%
After some algebra we have, 
\begin{equation*}
\frac{\partial ^{2}}{\partial \theta _{i}\partial \theta _{j}}H_{n}^{\tau }(%
\boldsymbol{\theta })\underset{n\longrightarrow \infty }{\overset{\mathcal{P}%
}{\longrightarrow }}-\left( \tau +1\right) J_{\tau }^{ij}\left( \boldsymbol{%
\theta }\right) .
\end{equation*}

\end{document}